\documentclass[11pt]{amsart}
\usepackage{tikz}
\usepackage{mathrsfs,dsfont}
\usepackage[mathcal]{euscript}
\usepackage{graphicx,graphics,epsfig}
\usepackage{amsmath,amssymb,amsfonts,amsthm,amscd}
\usepackage{calc}
\usepackage{color}
\usepackage{fancyhdr}
\usepackage{anysize}
\usepackage{enumerate}
\usepackage{indentfirst,latexsym}
\usepackage[all,poly,knot]{xy}
\usepackage{lineno}
\usepackage{etoolbox}
\usepackage[pagebackref]{hyperref}

\newcommand{\comment}[1]{}

\numberwithin{equation}{section}
\theoremstyle{plain}
\newtheorem{theorem}{Theorem}[section]
\newtheorem{lemma}[theorem]{Lemma}

\newtheorem{proposition}[theorem]{Proposition}
\newtheorem{corollary}[theorem]{Corollary}
\newtheorem{question}[theorem]{Question}
\newtheorem{problem}[theorem]{Problem}
\newtheorem{conjecture}[theorem]{Conjecture}

\newcommand{\stepref}[2]{\hyperlink{#1}{Step~#2}}

\theoremstyle{definition}
\newtheorem{definition}[theorem]{Definition}

\newtheorem{remark}[theorem]{Remark}

\newtheorem{observation}[theorem]{Observation}

\AtBeginEnvironment{proof}{\setcounter{step}{0}}

\begin{document}
%\linenumbers
\title[Extension of pluricanonical forms]
{Matsumura's extension problem for pluricanonical forms
in K\"ahler families I:
the smooth and essentially Moishezon cases}

\author[Jian Chen]{Jian Chen}
\address{Jian Chen, School of Mathematics and Statistics,
Central China Normal University, Wuhan 430079,
People's Republic of China}
\email{jian-chen@whu.edu.cn}

\author[Sheng Rao]{Sheng Rao}
\address{Sheng Rao, Department of Mathematics, Wuhan University, Hubei, Wuhan 430074, China}
\email{likeanyone@whu.edu.cn}

\author[Kai Wang]{Kai Wang}
\address{Kai Wang, School of Mathematics and statistics, Wuhan University, Wuhan 430072, China}
\email{kaiwang@whu.edu.cn}

\date{\today}

\thanks{The first author is supported by  NSFC  (Grant No. 12601144). The second and third authors are partially supported by NSFC (Grant No. 12271412, W2441003, 12671107) and Hubei Provincial Innovation Research Group Project (Grant No. 2025AFA044).}
\subjclass[2020]{32J18, 32Q15, 32W05, 14D07}
\keywords{Plurigenera, K\"ahler family, nef canonical bundle,
Monge--Amp\`ere equation, extension of sections}

\begin{abstract} 
In this paper, we study a problem posed by Matsumura on the extension of
pluricanonical forms in K\"ahler families with a relatively nef
canonical bundle.
We give an affirmative answer in the smooth case,  and for
one-parameter degenerations under the additional assumption that the
relative canonical bundle restricts to a big line bundle on an
irreducible component of the central fiber.
In particular, we confirm Siu's conjecture on the invariance of
plurigenera for a smooth K\"ahler family with one fiber admitting the nef canonical bundle.
The key step in the smooth case is to construct a semipositively
curved singular metric on the relative canonical bundle with
the integrability required for Cao's $L^2$ extension theorem,
using P\u{a}un's twisted form of Schumacher's curvature formula
and a uniform Monge--Amp\`ere estimate obtained by adapting
the capacity method of Boucksom--Eyssidieux--Guedj--Zeriahi.
For one-parameter degenerations, we propagate bigness from
an irreducible component of the central fiber to nearby smooth
fibers and then apply Matsumura's relative Kawamata--Viehweg
vanishing theorem on a projective modification to obtain the desired
extension.

\end{abstract}
\maketitle

\setcounter{tocdepth}{1}
\tableofcontents

\section{Introduction}

Throughout this paper, we call a proper morphism $f:X\to Y$
of complex manifolds \emph{K\"ahler} if there exists a
smooth real $d$-closed $(1,1)$-form $\Omega$ on $X$ whose
restriction to every fiber of $f$ is a K\"ahler form. After shrinking
$Y$, the source of a K\"ahler morphism is itself K\"ahler.

Motivated by Y.-T. Siu's conjecture on the invariance of plurigenera~\cite{Si02a} and the DLT extension conjecture~\cite[Conjecture 1.3]{DHP13} arising in the minimal model program,
S. Matsumura posed the following problem.

\begin{problem}[{\cite[Problem 1.8]{Mat22}}]\label{prop-matsu}
Let $\pi:X\to\Delta$ be a surjective proper K\"ahler morphism with connected fibers
from a connected complex manifold $X$ to the unit disk $\Delta\subset\mathbb C$,
whose central fiber $X_0:=\pi^{-1}(0)$ is a simple normal crossing
divisor. We call such a morphism a \emph{one-parameter degeneration}.
Assume that $K_X$ is $\pi$-nef. Can every section in $H^0\bigl(X_0,\mathcal O_{X_0}(K_X^m)\bigr)$ be extended to one in $H^0\bigl(X,\mathcal O_X(K_X^m)\bigr)$, after replacing
$\Delta$ by a smaller disk if necessary?
\end{problem}

\subsection{The smooth family case}

In the smooth family case, Matsumura's problem is closely related
to Siu's conjecture on the invariance of plurigenera.
Beginning with the remarkable work of Siu~\cite{Si98,Si02},
substantial progress has been made on the deformation invariance
of plurigenera for locally projective or Moishezon families.
Further invariance and extension results in various settings
were established by M. P\u{a}un~\cite{Pau07},
B. Claudon~\cite{Cla07}, S. Takayama~\cite{Tk07},
B. Berndtsson--P\u{a}un~\cite{BP12},
S. Rao--I.-H. Tsai~\cite{RT22}, B. He--X. Zhou~\cite{HZ23}. In particular, M.-L. Li--Rao--K. Wang--M. Wang \cite[Proposition A.1]{Ra26} prove the deformation invariance of plurigenera for a smooth family with uncountably many Moishezon fibers.

In the K\"ahler setting, Siu~\cite[Conjecture 2.1]{Si02a}
conjectured that, for a smooth family $\pi:X\to \Delta$
with K\"ahler fibers (or with K\"ahler total space), the \emph{$m$-genus} $P_m(X_t):=\dim H^0(X_t,mK_{X_t})$ is independent of
$t\in S$ for each positive integer $m$.
Already in the 1980s, M. Levine~\cite{Lv83,Lv85} obtained
invariance results under additional assumptions for families
whose fibers are compact complex manifolds in Fujiki's
class $\mathscr C$. Recently, J.-P. Demailly~\cite{Dem22} proposed an approach to the
conjecture using infinite-dimensional Bergman bundles, identifying additional
$L^2$ estimates for vertical derivatives as a key missing ingredient.
J. Cao--P\u{a}un~\cite{CP23} established infinitesimal extension
using $L^2$ methods and a generalized
$\partial\bar{\partial}$-lemma, generalizing Levine's results in the K\"ahler setting. The generalized $\partial\bar{\partial}$-lemma part is much connected to \cite{lrw}. Recently, the deformation invariance of plurigenera for a smooth K\"ahler family of threefolds was given in \cite{HLR26}, while the canonical singularities case is obtained in \cite{FLR26}.

Our first main result gives an affirmative
answer to Problem~\ref{prop-matsu} in the smooth case.

\begin{theorem}[{=Theorem~\ref{prop-section-extension}}]\label{thm-main}
Let $f: X\to \Delta$ be a smooth proper surjective K\"ahler
morphism of complex manifolds with connected fibers.
Assume that $K_{X_t}$ is nef for each $t\in \Delta$.
Then, for each integer $m\geq1$, the restriction map
\[
H^0\bigl(X,mK_{X/\Delta}\bigr)
\longrightarrow H^0(X_0,mK_{X_0})
\]
is surjective. Equivalently, for each positive integer $m$, the $m$-genus $P_m(X_t)$ of $X_t$ is constant, by Theorem \ref{intro-main-criterion}.
\end{theorem}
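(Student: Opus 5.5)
The plan follows the strategy announced in the abstract. The goal is a singular hermitian metric $h$ on $K_{X/\Delta}$ with semipositive curvature current on $X$ (after shrinking $\Delta$), such that a given $s\in H^0(X_0,mK_{X_0})$ satisfies the integrability hypothesis of Cao's $L^2$ extension theorem. Concretely, we want $|s|^{2}_{h^{m-1}}$ to be integrable on $X_0$, i.e.\ $h_0^{m-1}$ should be no more singular than the natural metric $|s|^{-2(m-1)/m}$, and ideally bounded. We then apply Cao's theorem (Ohsawa--Takegoshi type extension for $K_{X/\Delta}+(m-1)K_{X/\Delta}$ with metric $h^{m-1}$ on a K\"ahler total space) to extend $s$ to $X$. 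The equivalence with constancy of $P_m$ is then the cited Theorem~\ref{intro-main-criterion}, combined with upper semicontinuity.

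\emph{Step 1: fiberwise approximate K\"ahler--Einstein metrics.} Fix the relative K\"ahler form $\Omega$. Since $K_{X_t}$ is nef, the class $c_1(K_{X_t})+\varepsilon[\Omega_t]$ is K\"ahler for each $\varepsilon>0$. By Aubin--Yau, we solve
\[
(\theta_t+\varepsilon\Omega_t+dd^c\varphi_{t,\varepsilon})^n=e^{\varphi_{t,\varepsilon}}dV_t,
\]
where $\theta$ is a smooth representative of $c_1(K_{X/\Delta})$ and $dV_t$ is the volume form it induces. The solutions depend smoothly on $t$, so $\varphi_\varepsilon$ gives a smooth metric $h_\varepsilon=e^{-\varphi_\varepsilon}h_\theta$ on $K_{X/\Delta}$. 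P\u{a}un's twisted version of Schumacher's formula computes the total curvature $\theta+\varepsilon\Omega+dd^c\varphi_\varepsilon$ on $X$. The horizontal part is governed by a fiberwise elliptic equation $(\Delta_{\omega_{t}}-1)c=-|\bar\partial v|^2+(\text{error }O(\varepsilon))$, and the maximum principle should give $\theta+dd^c\varphi_\varepsilon\ge -C\varepsilon\Omega$ on $X$, up to harmless terms.

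\emph{Step 2: uniform estimates and the limit.} We need $\sup_X\varphi_\varepsilon\le C$ uniformly, which follows from the equation and volume bounds. We also need a uniform lower control strong enough that the weak limit $\varphi=\lim(\sup)^*$ as $\varepsilon\to0$ is not $\equiv-\infty$ on $X_0$ and has the required integrability. Here we adapt the capacity method of Boucksom--Eyssidieux--Guedj--Zeriahi. Since $e^{\varphi_{t,\varepsilon}}dV_t\le C\,dV_t$ has uniformly bounded $L^p$ density and the classes $c_1(K_{X_t})+\varepsilon[\Omega_t]$ have uniformly controlled geometry in $t$ and $\varepsilon$, Ko{\l}odziej-type estimates should give $\mathrm{osc}\,\varphi_{t,\varepsilon}\le C$ relative to a model potential. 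A comparison with $\frac{1}{m}\log|s|^2$ should then show $\varphi_{0,\varepsilon}\ge \frac{1}{m}\log|s|^2_{h_\theta^m}-C$. The mechanism is that $\frac1m\log|s|^2$ is a subsolution of the Monge--Amp\`ere equation, because $e^{\varphi}$ on the right-hand side makes the equation favor the maximal $\theta$-psh function, and the comparison principle applies. This comparison survives the limit and gives $|s|^2_{h^{m-1}}\le C|s|^{2/m}$, which is integrable. The limit $h$ has curvature $\ge0$ on $X$ by Step 1.

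\emph{Step 3: extension.} We apply Cao's $L^2$ extension theorem to $L=(m-1)K_{X/\Delta}$ with metric $h^{m-1}$. The total space is K\"ahler after shrinking, and the $L^2$ condition holds on $X_0$. This produces $\tilde s\in H^0(X,mK_{X/\Delta})$ with $\tilde s|_{X_0}=s$.

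\emph{Main obstacle.} The hard part is Step 2. The estimates must be uniform simultaneously in $\varepsilon\to0$ and $t$ near $0$, even though the classes $c_1(K_{X_t})$ are merely nef and possibly of varying numerical dimension, so that volumes may degenerate. I would first try to normalize the equation, dividing by $\int(\theta+\varepsilon\Omega)^n$ where needed, and run the BEGZ capacity argument with constants depending only on $\Omega$ and uniform $L^p$ bounds. If lower bounds fail, I would fall back to proving only the one-sided comparison with $\frac1m\log|s|^2$, which is all that the integrability requires.
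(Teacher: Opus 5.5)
Your outline follows the paper's route: fiberwise twisted K\"ahler--Einstein metrics, P\u{a}un's twisted Schumacher formula, a BEGZ-type uniform $C^0$ estimate, a psh limit on the total space, and Cao's theorem applied to $(m-1)K_{X/\Delta}$ with $h^{m-1}$. However, the one concrete mechanism you give for the decisive lower bound on $X_0$ does not work.

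Normalize $u_s=\frac1m\log|s|^2_{h_\theta^m}$ so that $\sup u_s=0$. Off $\operatorname{div}(s)$ one has $\theta_0+dd^cu_s=0$, so $\langle(\theta_0+\varepsilon\Omega_0+dd^cu_s)^n\rangle=\varepsilon^n\Omega_0^n$. For $u_s-C$ to be a subsolution of the normalized equation $(\theta_0+\varepsilon\Omega_0+dd^c\psi)^n=\mathcal V_\varepsilon e^{\psi}dV_0$, you need $e^{-C}\lesssim\varepsilon^n/\mathcal V_\varepsilon$. But nefness gives $\mathcal V_\varepsilon\geq n\varepsilon^{n-1}\int_{X_0}\theta_0\wedge\Omega_0^{n-1}$, which is $\gg\varepsilon^n$ unless $c_1(K_{X_0})=0$. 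So the comparison principle yields only $\psi_{0,\varepsilon}\geq u_s-\log(\mathcal V_\varepsilon/\varepsilon^n)-C$, with a constant that blows up at least like $\log(1/\varepsilon)$. Your fallback (``only the one-sided comparison'') therefore fails whenever $c_1(K_{X_0})\neq0$. Normalization is also not optional, as you anticipate in your last paragraph. For the unnormalized $\varphi_{0,\varepsilon}$, the bound $\varphi_{0,\varepsilon}\geq u_s-C$ is false when $K_{X_0}$ is not big, because $\int_{X_0}e^{\varphi_{0,\varepsilon}}dV_0=\mathcal V_\varepsilon\to0$.

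What is needed is to route the comparison through the envelope $P_\varepsilon:=P_{\theta_0+\varepsilon\Omega_0}$. Since $u_s$ is a competitor, $u_s\leq P_\varepsilon$ is trivial. All the content lies in the uniform estimate $\psi_{0,\varepsilon}\geq P_\varepsilon-C$ for $\psi=\varphi-\log\mathcal V_\varepsilon$; this is a $t$-independent shift, so curvature is unaffected. The paper obtains it by running the BEGZ argument with the normalized capacity $\operatorname{Cap}/\operatorname{Vol}$ and the probability measure $e^{\psi_{0,\varepsilon}}dV_0$:
\begin{itemize}
\item uniform Skoda (using $\theta_0+\varepsilon\Omega_0\leq B\Omega_0$) together with BEGZ Lemma 4.2 gives $\operatorname{Vol}_{dV}(E)\leq C_N\operatorname{cap}(E)^N$ independently of $\varepsilon$;
\item the De Giorgi iteration then closes once $\|e^{\psi_{0,\varepsilon}}\|_{L^2}$ is uniformly bounded;
\item that bound comes from $\sup\psi_{t,\varepsilon}\leq C$, proved via the uniform $L^1$ bound for normalized $\Omega_t$-psh functions in the family and Jensen's inequality.
\end{itemize}
This estimate is needed only on $X_0$, so your concern about varying numerical dimension in $t$ does not arise.

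A smaller point concerns the curvature step. In P\u{a}un's formula the twisting term is $+\varepsilon\Omega(v_\varepsilon,\overline{v_\varepsilon})$. It should not be treated as an $O(\varepsilon)$ error of unknown sign, since there is no uniform control on $v_\varepsilon$. It is nonnegative once $\Omega$ is replaced by $\Omega+Cf^*(\sqrt{-1}\,dt\wedge d\bar t)$, which is K\"ahler on the total space and does not change the fiberwise equation. The maximum principle then gives exactly $\theta+dd^c\varphi_\varepsilon\geq-\varepsilon\Omega$.
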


To prove Theorem~\ref{thm-main}, we follow the extension strategy
of Siu--P\u{a}un and use the Ohsawa--Takegoshi extension theorem
established by Cao~\cite[Theorem 1.1]{Cao17} (=
Theorem~\ref{thm-ot}).
The main difficulty lies in ensuring the required integrability
while maintaining semipositive curvature.
To address this, we construct a singular metric on the relative
canonical bundle using P\u{a}un's twisted form of Schumacher's
curvature formula~\cite[Section 3.2, equation (35)]{Pau17} and a
uniform Monge--Amp\`ere estimate obtained
by adapting the capacity method of
S. Boucksom--P. Eyssidieux--V. Guedj--A. Zeriahi~\cite{BEGZ10}.

Motivated by the work of Li--Rao--Wang \cite{LRW25},
Li--X.-L. Liu--Rao \cite{LLR26}, and L. Wang \cite{Wang26}
on the openness of the canonical nef locus, we establish
the local stability of canonical nefness in smooth fiberwise
K\"ahler families. This result is of independent interest
and allows us to weaken the nefness assumption in
Theorem~\ref{thm-main} after shrinking the base.

\begin{theorem}[{=Theorem~\ref{thm-nef-openness}}]\label{intro-thm-nef-openness}
Let $\pi:X\to \Delta$ be a smooth proper surjective
holomorphic map of complex manifolds with connected fibers
$X_t:=\pi^{-1}(t)$.
Suppose that $X_0$ is K\"ahler and that $K_{X_0}$ is nef.
Then there is $\varepsilon>0$ such that $K_{X_t}$ is nef for each
$t\in \Delta_{\varepsilon}:=\{t\in\Delta:|t|<\varepsilon\}$.

\end{theorem}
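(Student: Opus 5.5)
The plan is to deduce local stability of canonical nefness from the openness of the Kähler condition together with a semicontinuity argument for nefness of the canonical class along the family.

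\emph{Step 1: Kähler structure.} By Kodaira--Spencer stability, the fibers $X_t$ are Kähler for $|t|$ small. Moreover, after shrinking $\Delta$ we can choose a smooth family of Kähler forms $\omega_t$ varying smoothly in $t$. Following Kodaira--Spencer, we may arrange a closed real $(1,1)$-form $\Omega$ on $X$ whose fiber restrictions are Kähler. We also fix a smooth hermitian metric $h$ on $K_{X/\Delta}$, with curvature form $\theta$, so that $\theta_t:=\theta|_{X_t}$ represents $c_1(K_{X_t})$.

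\emph{Step 2: Reformulation via Monge--Ampère equations.} Nefness of $K_{X_t}$ means that for every $\varepsilon>0$ the class $[\theta_t+\varepsilon\omega_t]$ is Kähler. We argue by contradiction: suppose there are $t_j\to0$ and $\varepsilon_0>0$ with $K_{X_{t_j}}$ not nef.

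\emph{Step 3: Obtaining a Kähler metric at $t_j$.} Since $K_{X_0}$ is nef, $\theta_0+\varepsilon\omega_0$ is cohomologous to a Kähler form for every $\varepsilon>0$. For each small $\varepsilon$, solve on $X_0$ the Aubin--Yau type equation
\[
(\theta_0+\varepsilon\omega_0+i\partial\bar\partial\varphi)^n=e^{\varphi}\omega_0^n ,
\]
which is solvable precisely because the class is Kähler. Rather than work on $X_0$, however, the approach is to solve the analogous equation
\[
(\theta_t+\varepsilon\omega_t+i\partial\bar\partial\varphi_t)^n=e^{\varphi_t}\omega_t^n
\]
on $X_t$ by the continuity method in $t$. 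Positivity of the class at $t=0$ together with the implicit function theorem (openness of Kähler classes, Kodaira--Spencer) gives a solution for $|t|<\delta(\varepsilon)$. The problem is that $\delta$ depends on $\varepsilon$, so this alone does not rule out the sequence $t_j$.

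\emph{Step 4 (main obstacle): uniformity in $\varepsilon$.} To overcome this, I would instead argue via the characterization of nefness as the non-existence of negative subvarieties, using the Demailly--Păun theorem: a class is Kähler iff it is positive on all irreducible analytic cycles. If $K_{X_{t_j}}$ is not nef, then there exist subvarieties $Z_j\subset X_{t_j}$ with $\int_{Z_j}(\theta+\varepsilon\Omega)^{p}\le0$ for some fixed small $\varepsilon$ (after passing to a subsequence so that the dimension $p$ is constant). Their $\Omega$-volumes are bounded: this follows from the negativity itself combined with compactness in the Barlet cycle space (Bishop's theorem). A limit cycle $Z_0\subset X_0$ would then satisfy $\int_{Z_0}(\theta_0+\varepsilon\omega_0)^p\le 0$, contradicting the nefness of $K_{X_0}$.

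The hard point is the mass bound: one must control $\int_{Z_j}\Omega^p$ uniformly in $j$. If a uniform bound is not available directly, I would fall back on the $\varepsilon$-uniform $C^0$ estimate for the above Monge--Ampère equations, via the capacity method of Boucksom--Eyssidieux--Guedj--Zeriahi mentioned in the introduction. This would make $\delta$ independent of $\varepsilon$, giving a contradiction.
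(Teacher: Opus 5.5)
Your argument has a genuine gap in Step 4, and the proposal does not reach a contradiction.

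\textbf{The limit argument cannot work.} You fix a small $\varepsilon$ and assume $[\theta_{t_j}+\varepsilon\omega_{t_j}]$ fails to be K\"ahler for every $j$. That cannot be arranged. Your own Step 3 shows that for each fixed $\varepsilon$ the class is K\"ahler for $|t|<\delta(\varepsilon)$: extend a positive representative $\theta_0+\varepsilon\omega_0+dd^cf_0$ smoothly to nearby fibers. So non-nefness of $K_{X_{t_j}}$ with $t_j\to0$ only gives nef thresholds $\varepsilon_j\to0$. With $\varepsilon_j\to0$, a limit cycle satisfies only $\int_{Z_0}\theta_0^p\le0$, or the mixed-degree version from Demailly--P\u{a}un. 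Hence the integral is $0$, which is fully compatible with $K_{X_0}$ nef. Nefness is a closed condition, so a compactness-and-limit argument can only reproduce non-strict inequalities at $t=0$.

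\textbf{The volume bound is missing.} Negativity of $\int_{Z_j}(\theta+\varepsilon\Omega)^p$ gives no upper bound on $\int_{Z_j}\Omega^p$. Bishop/Barlet compactness needs such a bound as input; it does not supply one.

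\textbf{The BEGZ fallback is circular.} The uniform $C^0$ estimate of Section 2 holds on a fixed manifold with a nef class, relative to the envelope. Solvability of your equation on $X_t$ at level $\varepsilon$ is equivalent to $[\theta_t+\varepsilon\omega_t]$ being K\"ahler, which is exactly the claim to be proved. A $C^0$ bound also does not give the $\varepsilon$-uniform higher-order control a continuity method in $t$ would require.

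\textbf{A secondary error in Step 1.} A closed real $(1,1)$-form on $X$ that is K\"ahler on fibers would make $X$ K\"ahler over a smaller disc after adding $C\pi^*(\sqrt{-1}\,dt\wedge d\bar t)$. That can fail, as the paper notes with its citation of [C26, \S5]. One only gets a closed real two-form that is positive on vertical complex lines and need not be of type $(1,1)$.

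\textbf{What is missing} is a way to get a contradiction inside a single bad fiber, after one shrinking of the disc fixed in advance. The paper does this with rational curves and deformation theory.
\begin{enumerate}
\item In a bad fiber $F=X_b$, take a $K_F$-negative rational curve $C$ (Cao--H\"oring together with Ou's criterion) of least $\Omega_b$-area. This fixes a homology class $\beta$, which is exactly what supplies the area bound for Gromov compactness of the relative stable-map space.
\item Nefness of $K_{X_0}$ and Remmert's theorem show that the parameters carrying class-$\beta$ maps form a discrete set. Hence every deformation of the normalization $f:\mathbb P^1\to C$ inside the total space stays in $F$.
\item Because $F$ has trivial normal bundle, $\deg f^*T_X=\ell:=-K_F\cdot C$. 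The Hom-germ in $X$ then has dimension at least $n+1+\ell$, which forces a compact family of such curves in $F$ of dimension at least $n+\ell-2$.
\item The class $c_1(K_F)+(\ell/a)[\Omega_b]$ is the parallel transport of a K\"ahler class on $X_0$, so it satisfies hard Lefschetz, and it has degree zero on $C$. A two-point incidence argument then bounds such a family by $n-2$.
\end{enumerate}
The contradiction comes from the extra dimension of deformations in the total space combined with confinement to the fiber. This mechanism has no analogue in your cycle-space limit argument.
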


Theorem~\ref{thm-main}, combined with
Theorem~\ref{intro-thm-nef-openness}, easily yields the following.
\begin{theorem}\label{intro-thm-main}
Let $f:X\to \Delta$ be a smooth proper surjective K\"ahler
morphism with connected fibers.
Assume that $K_{X_0}$ is nef.
Then, for each positive integer $m$, the $m$-genus 
$P_m(X_t)$ of $X_t$ is constant on a neighborhood of $0$ that can be chosen
independently of $m$.
\end{theorem}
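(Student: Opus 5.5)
The plan is to combine Theorem~\ref{intro-thm-nef-openness} with Theorem~\ref{thm-main} applied over a shrunken disk, and then to pass from surjectivity of restriction to constancy of $P_m$ through Theorem~\ref{intro-main-criterion}.

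First, every fiber $X_0$ is K\"ahler, since $f$ is a K\"ahler morphism. With $K_{X_0}$ nef, Theorem~\ref{intro-thm-nef-openness} therefore gives $\varepsilon>0$ such that $K_{X_t}$ is nef for all $t\in\Delta_\varepsilon$. This $\varepsilon$ comes from the openness statement alone and involves no integer $m$, which is where the independence from $m$ will come from.

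Next, consider the restricted family $f_\varepsilon:X_\varepsilon:=f^{-1}(\Delta_\varepsilon)\to\Delta_\varepsilon$, reparametrized by $t\mapsto t/\varepsilon$ so that the base is the unit disk. It is still smooth, proper, surjective, with connected fibers, and K\"ahler: the restriction of $\Omega$ to $X_\varepsilon$ is still a $d$-closed real $(1,1)$-form that is K\"ahler on every fiber. All fibers now have nef canonical bundle, so Theorem~\ref{thm-main} applies. For every $m\ge1$ the restriction $H^0(X_\varepsilon,mK_{X_\varepsilon/\Delta_\varepsilon})\to H^0(X_0,mK_{X_0})$ is surjective, and $P_m(X_t)$ is constant for $t\in\Delta_\varepsilon$.

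The same argument works with any point $t_0\in\Delta_\varepsilon$ in place of $0$, since $X_{t_0}$ is K\"ahler with nef canonical bundle. So if Theorem~\ref{intro-main-criterion} requires surjectivity at every point of the base, the hypotheses are still met.

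There is no serious obstacle here; the substantive work lies in Theorems~\ref{thm-main} and~\ref{intro-thm-nef-openness}. The only point to watch is that the neighborhood is produced once, before $m$ is fixed, which the order of the argument above guarantees.
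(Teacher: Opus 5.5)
Your proposal is correct and follows the paper's route: the paper also gets this theorem by first using Theorem~\ref{intro-thm-nef-openness} to fix an $m$-independent disk $\Delta_\varepsilon$ on which every $K_{X_t}$ is nef, and then applying Theorem~\ref{thm-main} to the restricted family. The $m$-independence does not come from the order of the argument but from your re-centering remark, which is exactly what makes it hold: applied only at $0$, the criterion gives constancy on a neighborhood that could a priori depend on $m$, whereas applying Theorem~\ref{thm-main} at each $t_0\in\Delta_\varepsilon$ gives local constancy of $P_m$ everywhere on the connected disk $\Delta_\varepsilon$, hence constancy on all of it.
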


Based on all the above argument,
it is natural to propose:
\begin{conjecture}
Let $\pi:X\to\Delta$ be a smooth family of compact K\"ahler manifolds.
Suppose that one fiber has a semipositive canonical line bundle.
Then $K_X$ is semipositive (or even semiample) in a neighborhood of every fiber of $\pi$.
\end{conjecture}

By the local K\"ahler stability theorem of Kodaira--Spencer,
one may wish to weaken the K\"ahler morphism assumption in
Theorem~\ref{intro-thm-main} to the assumption that $X_0$ is K\"ahler.
However, for such a family, the total space may fail to be K\"ahler
even after shrinking the base; see, for example,~\cite[\S~5]{C26}.
In the proof of Theorem~\ref{intro-thm-main}, however, local
K\"ahlerness of the total space is used in both the curvature and
extension arguments. In the $L^2$-extension approach, solving the
$\bar\partial$-equation without the K\"ahler identities on the total
space appears difficult.
A natural question arises:

\begin{question}
Let $f:X\to\Delta$ be a smooth proper surjective holomorphic
map of complex manifolds with connected fibers, and write
$X_t:=f^{-1}(t)$.
Assume that $X_0$ is K\"ahler and that $K_{X_0}$ is nef.
Can one prove, using Kodaira--Spencer deformation theory
(for instance, via a power-series method),
that for each integer $m\geq 1$, the $m$-genus 
$P_m(X_t)$ of $X_t$ 
is constant in a neighborhood of $0$ (possibly depending on $m$)?
\end{question}

\subsection{The one-parameter degeneration case}

We now turn to one-parameter degenerations. Matsumura~\cite[Corollary 1.9]{Mat22} obtained an affirmative
answer to Problem~\ref{prop-matsu} under the additional
assumption that $K_X$ admits a singular Hermitian metric
with semipositive curvature and zero Lelong numbers at
each point of the central fiber.
Matsumura’s proof combines an injectivity theorem with multiplier
ideal sheaves and the restriction exact sequence.

Rather than assuming such a metric, we use relative nefness
to propagate bigness from one irreducible component of the
central fiber to nearby smooth fibers  (thus the morphism is essentially Moishezon near the central fiber).
We then apply Matsumura's relative
Kawamata--Viehweg--Nadel vanishing theorem~\cite[Theorem 1.7]{Mat22}
(=Theorem~\ref{thm-relative-kv-vanishing}) on a suitable proper modification
that is projective over the base, and descend the resulting
vanishing to $X$.
For $m\geq2$, this gives the vanishing of all higher direct
images of $mK_{X/\Delta}$ after shrinking the disk;
extension then follows from the restriction exact sequence.
We obtain the following result.

\begin{theorem}[{=Theorem~\ref{thm-nef-big}}]\label{intro-thm-nef-big}
Let $\pi:X\to \Delta$ be a proper surjective K\"ahler morphism with connected fibers
from a connected complex manifold of dimension $n+1$, where $n\geq1$.
Suppose that the central fiber $X_0:=\pi^{-1}(0)$ is a 
 simple normal crossing divisor and that $K_X$ is
$\pi$-nef. Write $X_0=\sum_{i=1}^aD_i$.
Assume that $K_{X/\Delta}|_{D_i}$ is big for at least one index $i$.
Then the following statements hold.
\begin{enumerate}[(i)]
\item
For each fixed integer $m\geq2$, after shrinking the disk 
centered at $0$, we obtain that  
\begin{equation*}
R^q(\pi)_*K_{X/\Delta}^{\otimes m}=0
\qquad\text{for each }q>0.
\end{equation*}
\item
For each fixed integer $m\geq1$, after shrinking the disk 
centered at $0$, we obtain that  
the restriction map
\begin{equation*}
H^0(X,K_{X/\Delta}^{\otimes m})
\longrightarrow
H^0(X_0,K_{X/\Delta}^{\otimes m}|_{X_0})
\end{equation*}
is surjective. 
\end{enumerate}
\end{theorem}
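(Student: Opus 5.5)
The plan follows the strategy outlined in the introduction: first propagate bigness from $D_i$ to the nearby smooth fibers, then obtain a projective modification over the base, apply Matsumura's relative Kawamata--Viehweg--Nadel vanishing (Theorem~\ref{thm-relative-kv-vanishing}) there, descend the vanishing to $X$, and finally deduce extension from the restriction sequence.

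\textbf{Step 1 (bigness propagates).} Set $L:=K_{X/\Delta}$; since $\Delta$ is a disk, $L\cong K_X$ and $L$ is $\pi$-nef. For $t\neq0$ small, $X_t$ is smooth, $L|_{X_t}$ is nef, and $(L|_{X_t})^n=\sum_j (L|_{D_j})^n$ by constancy of intersection numbers in the flat family; here I use that $X_0=\sum D_j$ is reduced and $\mathcal O_X(X_0)\cong\mathcal O_X$. Each term $(L|_{D_j})^n$ is $\geq0$ by nefness. The term for the given index $i$ is $>0$, because $L|_{D_i}$ is nef and big and, by the Demailly--P\u{a}un characterization on the compact K\"ahler manifold $D_i$, nef and big means positive top self-intersection. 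Hence $L|_{X_t}$ is nef and big, and so $X_t$ is Moishezon. Since $X_t$ is also K\"ahler, it is projective. The main obstacle will then be to upgrade this to a global statement: a proper bimodification $\mu:X'\to X$, with $X'$ smooth and $\pi\circ\mu$ projective over a (shrunken) disk, together with a decomposition $\mu^*L\sim_{\mathbb Q}A+E$. Here $A$ should be $\pi$-ample and $E$ effective, with $\mu$-exceptional support plus $X_0$-components, arranged in simple normal crossing position. I would try to obtain this by writing the relatively big nef class as a relative K\"ahler class plus an effective divisor, using relative bigness over the disk (uniform near $0$ by the volume computation above) and relative Kodaira lemmas. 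Then I would resolve the relative algebraic structure via Fujiki/Artin relative Moishezon theory to reach relative projectivity.

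\textbf{Step 2 (vanishing).} For $m\geq2$, write $mK_{X'/\Delta}=K_{X'}+(m-1)\mu^*L+(K_{X'}-\mu^*K_X)$, where $K_{X'}-\mu^*K_X$ is effective and $\mu$-exceptional. Equip $(m-1)\mu^*L$ with the singular metric coming from $\epsilon(A+E)+(1-\epsilon)\cdot$(nef approximation). The curvature is then $\pi$-positive modulo the nef part. The multiplier ideal is trivial for $\epsilon$ small, since $E$ is snc with small coefficients. Theorem~\ref{thm-relative-kv-vanishing} then gives $R^q(\pi\circ\mu)_*\mathcal O_{X'}(K_{X'}+(m-1)\mu^*L)=0$ for $q>0$. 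To descend, I use $R\mu_*\mathcal O_{X'}(K_{X'})=\mathcal O_X(K_X)$ (Grauert--Riemenschneider together with $\mu_*K_{X'}=K_X$) and the projection formula. This gives $R^q\pi_*(K_X\otimes L^{m-1})=0$, i.e.\ $R^q\pi_*L^{\otimes m}=0$, which is (i).

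\textbf{Step 3 (extension).} For $m\geq2$, take the exact sequence $0\to L^m\xrightarrow{\cdot t}L^m\to L^m|_{X_0}\to0$. Since $\Delta$ is Stein and $R^1\pi_*L^m=0$, we get $H^1(X,L^m)=0$, so restriction to $X_0$ is surjective. For $m=1$, $L|_{X_0}\cong K_{X_0}$ as a dualizing sheaf, since $X_0$ is Cartier and $\mathcal O(X_0)$ is trivial. Surjectivity then follows from Koll\'ar-type torsion freeness/local freeness of $R^q\pi_*\omega_{X/\Delta}$. In the K\"ahler setting I would take this from Takegoshi or Saito, which gives cohomology-and-base-change for $\pi_*\omega_{X/\Delta}$ and hence surjectivity onto $H^0(X_0,\omega_{X_0})$ after shrinking the disk.
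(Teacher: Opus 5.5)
Your outline matches the paper's proof: intersection numbers propagate bigness, you pass to a projective modification, the metric $h_k^{1-\delta}h_{\rm big}^{\delta}$ goes into Theorem~\ref{thm-relative-kv-vanishing}, you descend by Grauert--Riemenschneider, and you finish with the restriction sequence. Your finish, $H^1(X,L^{m})=H^0(\Delta,R^1\pi_*L^{m})=0$ by Leray on the Stein disk, is slightly more direct than the paper's appeal to torsion-freeness and Theorem~\ref{intro-main-criterion}; for $m=1$ you use the same Takegoshi input.

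However, the intermediate target you set in Step~1 fails as stated. You ask for $\mu^*L\sim_{\mathbb Q}A+E$ with $E$ supported on $\mu$-exceptional divisors and components of $X_0$, and this is impossible in general.
\begin{itemize}
\item[(a)] Take $X=S\times\Delta$ with $S$ a minimal surface of general type containing a $(-2)$-curve $C$; every hypothesis of the theorem holds.
\item[(b)] Since $X$ is smooth, the center of $\mu$ has codimension at least two. So for general $t$ the strict transform $\widetilde C_t$ of $C\times\{t\}$ is defined, and $\mu^*L\cdot\widetilde C_t=K_S\cdot C=0<A\cdot\widetilde C_t$.
\item[(c)] Hence $E\cdot\widetilde C_t<0$, so $E$ must contain the strict transform of the horizontal, non-exceptional divisor $C\times\Delta$.
\end{itemize}
Fortunately your argument never uses the support condition or the snc condition. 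For an arbitrary effective $E$ on the projective model, after shrinking the disk the multiplicities of $E$ are bounded. Then $\mathcal I(\epsilon E)=\mathcal O$ for small $\epsilon$ by Skoda's lemma (the paper checks this on a log resolution), and that is all your Step~2 needs. Separately, the identity at the start of your Step~2 should read $mK_{X'}=K_{X'}+(m-1)\mu^*L+(m-1)(K_{X'}-\mu^*K_X)$; it plays no role.

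Relatedly, your construction should run in the opposite order. On $X$ itself there is in general no $\pi$-ample line bundle. So ``relative K\"ahler class plus effective divisor'' is only a transcendental (K\"ahler current) statement and does not yield the algebraic decomposition you want.

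The paper first builds the projective model in Lemma~\ref{lem-projective-model}:
\begin{itemize}
\item[(1)] After shrinking, finitely many global sections generate $\pi_*K_{X/\Delta}^{\otimes \ell}$ over the Stein disk, for a suitable $\ell$.
\item[(2)] These sections define a relative meromorphic map to $\mathbb P^N\times\Delta$ that is bimeromorphic onto its image.
\item[(3)] Chow's lemma then gives $\mu:Y\to X$ with $\pi\circ\mu$ projective.
\end{itemize}
Only afterwards, in Lemma~\ref{lem-adjoint-vanishing}, does it fix an $f$-ample $A$ on $Y$. It then obtains $F^{\otimes r}\cong A\otimes\mathcal O_Y(E)$ from a nonzero section of $(F^{\otimes r}\otimes A^{-1})|_{Y_s}$ on a general fiber, using base change and Cartan's Theorem~A. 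This concrete argument replaces your appeal to Fujiki/Artin theory. With this ordering and the support condition on $E$ dropped, your proof goes through.
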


The rest of the paper is organized as follows.
In Section~\ref{sec-preli}, we recall basic notions from
pluripotential theory and establish a uniform Monge--Amp\`ere
estimate relative to the minimal-singularity envelope.
In Section~\ref{sect-exten-smooth}, we use this estimate to
construct a semipositively curved singular metric on the relative
canonical bundle and verify the integrability required for
Cao's $L^2$ extension theorem, thereby proving
Theorem~\ref{thm-main}. Section~\ref{sec-nef-openness} establishes
the local stability of canonical nefness by combining stable-map compactness,
deformation theory of rational curves, and a hard Lefschetz
dimension estimate, without assuming that the total space
is K\"ahler.
Finally, Section~\ref{sec-nef-big} proves
Theorem~\ref{intro-thm-nef-big}
by propagating bigness of the relative canonical bundle from
one central component to nearby smooth fibers and applying
Matsumura's relative vanishing theorem on a projective modification.

\section{Some preliminary estimates}\label{sec-preli}

Throughout this paper, we use the convention
$dd^c=\sqrt{-1}\,\partial\bar\partial$.
Throughout this section, $(X,\omega)$ is a connected compact K\"ahler manifold of dimension
$n\geq1$, with reference K\"ahler form $\omega$.
Let $dV$ be a fixed smooth positive volume form on $X$.
Let $H_{\partial\bar{\partial}}^{1,1}(X,\mathbb R)$ denote the
real Bott--Chern cohomology group.

\subsection{Basic notions in pluripotential theory}
In this subsection, we recall some basic definitions and notation
from pluripotential theory.

\begin{definition}[{e.g.,~\cite[Definition 1.5]{DP04}}]\label{def-nef}
A class $\alpha\in
H_{\partial\bar{\partial}}^{1,1}(X,\mathbb R)$ is said to be
\emph{nef} if, for a smooth $d$-closed real $(1,1)$-form
$\eta$ representing $\alpha$ and every $\delta>0$, there exists
$\psi_\delta\in C^\infty(X,\mathbb R)$ such that
\[
\eta+dd^c\psi_\delta\geq-\delta\omega.
\]

\end{definition}

Throughout the rest of this section, fix a nef class
$\alpha\in H_{\partial\bar{\partial}}^{1,1}(X,\mathbb R)$
and a smooth $d$-closed real $(1,1)$-form $\theta$
representing $\alpha$.
For each $0<\varepsilon\leq1$, set
\[
\theta_\varepsilon:=\theta+\varepsilon\omega.
\]
The class $\{\theta_\varepsilon\}$ is K\"ahler, although
$\theta_\varepsilon$ itself need not be positive.
We denote its top self-intersection number by
\begin{equation*}
\mathcal V_\varepsilon
:=
\int_X\{\theta_\varepsilon\}^n.
\end{equation*}

\begin{definition}[{e.g.,~\cite[\S~1.3]{BEGZ10}}]
For a smooth $d$-closed real $(1,1)$-form $\eta$, we denote by
$\operatorname{PSH}(X,\eta)$ the space of \emph{$\eta$-plurisubharmonic
($\eta$-psh) functions} on $X$, namely, upper semicontinuous functions
$u:X\to\mathbb R\cup\{-\infty\}$ which are locally the sum of a
plurisubharmonic function and a smooth function, are not identically
$-\infty$, and satisfy
$
\eta+dd^cu\ge0
$
in the sense of currents.
\end{definition}

\begin{definition}[{e.g.,~\cite[\S~1.4]{BEGZ10}}]
For a \emph{pseudoeffective} class $\{\eta\}$, i.e., a class admitting a
closed positive $(1,1)$-current as a representative, define the
\emph{envelope}
\[
P_\eta
:=
\left(
\sup\left\{
u\in\operatorname{PSH}(X,\eta):u\le0
\right\}
\right)^*,
\]
where $^*$ denotes the \emph{upper semicontinuous regularization}, i.e.,
\[
f^*(x):=\limsup_{y\to x}f(y)
=
\inf\left\{
\sup_{y\in U}f(y):
U\subset X\text{ is an open neighborhood of }x
\right\}.
\]
\end{definition}

Clearly, by the definition of the envelope and the stability of
plurisubharmonic functions under upper envelopes, we have
$P_\eta\in\operatorname{PSH}(X,\eta)$ and $\sup_XP_\eta=0$.
Moreover, $P_\eta$ has minimal singularities in the class
$\{\eta\}$ in the sense that, for any $v\in\operatorname{PSH}(X,\eta)$,
\[
v\le P_\eta+\sup_Xv
\]
on $X$.

\begin{definition}[{e.g.,~\cite[Definitions 1.17 and 2.1]{BEGZ10}}]
The \emph{volume of a big class} $\{\eta\}$ is defined by
\[
\operatorname{Vol}(\{\eta\})
:=
\int_X\left\langle
(\eta+dd^cP_\eta)^n
\right\rangle.
\]
A function $u\in\operatorname{PSH}(X,\eta)$ is said to have
\emph{full Monge--Amp\`ere mass} if
\[
\int_X\left\langle
(\eta+dd^cu)^n
\right\rangle
=
\operatorname{Vol}(\{\eta\}).
\]
If $\{\eta\}$ is nef and big, then
\[
\operatorname{Vol}(\{\eta\})=\int_X\{\eta\}^n.
\]
\end{definition}

Here the brackets denote the
\emph{nonpluripolar product} of~\cite[\S~1.2]{BEGZ10} and, in particular,
$\left\langle(\eta+dd^cu)^n\right\rangle$ is a positive measure
which assigns zero measure to pluripolar sets and agrees with the usual
Bedford--Taylor product wherever $u$ is locally bounded.

\begin{definition}[{e.g.,~\cite[\S~1.2, \S~4.1, (4.3)]{BEGZ10}}]
For a big class $\{\eta\}$, we define the \emph{Monge--Amp\`ere capacity}
\begin{equation*}
\operatorname{Cap}_\eta(E)
:=
\sup_{\substack{u\in\operatorname{PSH}(X,\eta)\\
P_\eta-1\le u\le P_\eta}}
\int_E
\left\langle(\eta+dd^cu)^n\right\rangle,
\end{equation*}
where $E\subset X$ is a Borel set.

The \emph{normalized Monge--Amp\`ere capacity} is defined by
\begin{equation*}
\operatorname{cap}_\eta(E):=
\frac{\operatorname{Cap}_\eta(E)}
{\operatorname{Vol}(\{\eta\})}.
\end{equation*}
Since
$\operatorname{Cap}_\eta(X)=\operatorname{Vol}(\{\eta\})$,
we have
\[
0\le\operatorname{cap}_\eta(E)\le1.
\]
\end{definition}

In the definition of the Monge--Amp\`ere capacity, we use
$P_\eta-1\le u\le P_\eta$ for convenience in the sublevel-set estimates below.
This differs slightly from the convention
$P_\eta\le u\le P_\eta+1$ in~\cite[\S~4.1, (4.3)]{BEGZ10}
only by adding a constant to the test functions, and therefore
defines exactly the same capacity.

\subsection{Some preliminary estimates}
In this subsection, we prove some important estimates which will be used in section three.

We use uniform version of Skoda's integrability theorem to establish the following inequality:
\begin{lemma}\label{lem-uniform-skoda}
There exist constants $a>0$ and $C>0$, independent of
$\varepsilon$, such that
\[
\int_X e^{-au}\,dV\le C
\]
for any $u\in\operatorname{PSH}(X,\theta_\varepsilon)$ satisfying the normalization
$\sup_Xu=0$.
\end{lemma}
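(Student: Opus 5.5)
The plan is to reduce to a single fixed class and then invoke the standard uniform Skoda integrability for compact families of quasi-psh functions. Since $0<\varepsilon\le1$, we have $\theta_\varepsilon=\theta+\varepsilon\omega\le\theta+\omega$. Moreover, $\theta$ is smooth and $\omega$ is K\"ahler, so there is a constant $A\ge1$ with $\theta+\omega\le A\omega$ on $X$. Hence every $u\in\operatorname{PSH}(X,\theta_\varepsilon)$ satisfies $A\omega+dd^cu\ge\theta_\varepsilon+dd^cu\ge0$, i.e.\ $u\in\operatorname{PSH}(X,A\omega)$. It therefore suffices to prove the estimate for the single family
\[
\mathcal F:=\{u\in\operatorname{PSH}(X,A\omega):\sup_Xu=0\},
\]
with constants depending only on $(X,\omega,A,dV)$, which is independent of $\varepsilon$.

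Next I would recall that $\mathcal F$ is compact in $L^1(X)$; this is the standard compactness of normalized quasi-psh functions, e.g.\ \cite[Proposition 2.7]{GZ05}-type results, proved via sub-mean-value inequalities in coordinate charts. The second ingredient is the uniform bound on Lelong numbers. For $u\in\mathcal F$ and $x\in X$, the Lelong number satisfies $\nu(u,x)\le\nu_0$, where $\nu_0$ depends only on the cohomology class $\{A\omega\}$. Indeed, $\nu(u,x)$ is bounded by the mass $\int_X(A\omega+dd^cu)\wedge\omega^{n-1}=A\int_X\omega^n$, up to a constant from comparing $\omega$ with the Euclidean form in a finite atlas. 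Alternatively one can use $\nu(u,x)\le$ the Seshadri-type constant of $\{A\omega\}$ by Demailly's argument.

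Then I would apply Skoda's theorem in its uniform form (Zeriahi's uniform version): if a family of psh functions on a ball is compact in $L^1_{\rm loc}$ and all Lelong numbers are $\le\nu_0$, then for any $a<2/\nu_0$ the integrals $\int_{B'}e^{-au}$ are uniformly bounded on a smaller ball. To use it, I cover $X$ by finitely many coordinate balls $B_j\Subset B_j'$ on which $A\omega=dd^c\phi_j$ with $\phi_j$ smooth and bounded. Then $u+\phi_j$ is psh on $B_j'$, and the family $\{u+\phi_j|_{B_j'}:u\in\mathcal F\}$ is compact in $L^1_{\rm loc}$, with $\sup\le\sup\phi_j$ and Lelong numbers $\le\nu_0$. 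Since $e^{-au}\le e^{a\|\phi_j\|_\infty}e^{-a(u+\phi_j)}$, fixing $a:=1/\nu_0$ and summing over $j$ gives $\int_Xe^{-au}dV\le C$.

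The main obstacle is the uniform Skoda step, since the pointwise Skoda theorem gives integrability for each $u$ but not a uniform constant. If I wanted to avoid citing Zeriahi, I would argue by contradiction. Take $u_k\in\mathcal F$ with $\int e^{-au_k}\to\infty$, pass to an $L^1$ limit $u$, and use Demailly--Koll\'ar semicontinuity of complex singularity exponents. This shows that $e^{-a'u_k}$ is eventually uniformly integrable for $a<a'<2/\nu_0$, which is a contradiction. In either form, the key input is the Lelong number bound coming from the fixed upper class $\{A\omega\}$, and that bound is exactly what makes the constants independent of $\varepsilon$.
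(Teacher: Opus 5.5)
Your proposal is correct and follows essentially the same route as the paper. Like the paper, you dominate $\theta_\varepsilon$ by a fixed multiple of $\omega$ and then apply the uniform Skoda estimate for normalized quasi-psh functions in a fixed class; the paper chooses $B$ with $B\omega\ge\theta+\omega$, rescales to $u/B\in\operatorname{PSH}(X,\omega)$, and cites the uniform bound from the proof of Proposition 4.3 in \cite{BEGZ10}, whereas you unpack that step via $L^1$-compactness, the uniform Lelong number bound, and Zeriahi's uniform version of Skoda's theorem.
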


\begin{proof}
Choose $B>0$ such that
\[
B\omega-\theta-\omega\ge0.
\]
Then, for $0<\varepsilon\le1$,
\[
B\omega-\theta_\varepsilon
=
B\omega-\theta-\varepsilon\omega
\ge0.
\]
If $u\in\operatorname{PSH}(X,\theta_\varepsilon)$, then
\[
B\omega+dd^cu
=
(B\omega-\theta_\varepsilon)
+
(\theta_\varepsilon+dd^cu)
\ge0.
\]
Thus $u/B\in\operatorname{PSH}(X,\omega)$, and
$\sup_X(u/B)=0$.

By the uniform version of Skoda's integrability theorem
(e.g.,~\cite[proof of Proposition 4.3]{BEGZ10}), there exist
constants $a_0>0$ and $C_0>0$, depending only on
$(X,\omega,dV)$, such that
\[
\int_X e^{-a_0v}\,dV\le C_0
\]
for any $v\in\operatorname{PSH}(X,\omega)$ with $\sup_Xv=0$.
Applying this to $v=u/B$ and setting
$a:=a_0/B$ and $C:=C_0$ proves the assertion.
\end{proof}

We reformulate~\cite[Lemma 4.2]{BEGZ10} as follows.

\begin{lemma}\label{lem-relative-at}
Let $\eta$ be a smooth $d$-closed real $(1,1)$-form whose class
$\{\eta\}$ is big, and let $K\subseteq X$ be a compact
nonpluripolar set. Set
\[
U_{K,\eta}
:=
\left(
\sup\{u\in\operatorname{PSH}(X,\eta):u\le0\text{ on }K\}
\right)^*,
\qquad
M_{K,\eta}:=\sup_XU_{K,\eta}.
\]
Then
\begin{equation}\label{eq-relative-at}
\frac{\operatorname{Cap}_\eta(K)}
{\operatorname{Vol}(\{\eta\})}
\ge
\frac1{(M_{K,\eta}+1)^n}.
\end{equation}
\end{lemma}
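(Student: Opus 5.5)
We follow the argument of \cite[Lemma 4.2]{BEGZ10}, adapted to test functions in the window $P_\eta-1\le u\le P_\eta$. We may assume $M:=M_{K,\eta}<\infty$; this holds because $K$ is nonpluripolar. We then use $U_{K,\eta}$ to build a competitor $u$ for $\operatorname{Cap}_\eta(K)$ whose Monge--Amp\`ere measure carries enough mass on $K$.

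The first step collects the basic properties of $U_K:=U_{K,\eta}$. It is $\eta$-psh. Since $P_\eta\le0$ on $K$, we have $P_\eta\le U_K$ on $X$. Conversely, $U_K-M\le0$ is a competitor for $P_\eta$, so $U_K\le P_\eta+M$. Hence $U_K$ has minimal singularities, and therefore full Monge--Amp\`ere mass: $\int_X\langle(\eta+dd^cU_K)^n\rangle=\operatorname{Vol}(\{\eta\})$.

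The second and key step is the support property: $\langle(\eta+dd^cU_K)^n\rangle$ is concentrated on $K$. More precisely, it vanishes on $X\setminus K$ away from the negligible set where $U_K$ differs from the uncapped envelope, and $U_K=0$ quasi-everywhere on $K$. The proof is the standard balayage argument. Off $K$ the constraint is void, so locally in a small ball $B\Subset X\setminus K$ (where $P_\eta$, hence $U_K$, is locally bounded on the ample locus) one can replace $U_K$ by the solution of the homogeneous Dirichlet problem and obtain a larger competitor. Therefore $(\eta+dd^cU_K)^n=0$ on $B$. Because nonpluripolar products put no mass on the non-ample locus, this handles all of $X\setminus K$. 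Using Choquet's lemma together with the fact that negligible sets are pluripolar, $U_K\le 0$ on $K$ off a pluripolar set. Consequently
\[
\int_K\langle(\eta+dd^cU_K)^n\rangle=\operatorname{Vol}(\{\eta\}).
\]
This is the main obstacle, since it needs BEGZ's envelope theory for big classes (quasi-continuity and the vanishing of the Monge--Amp\`ere measure off the contact set). We will cite \cite[proof of Lemma 4.2]{BEGZ10} and the accompanying Bedford--Taylor-type facts there rather than reprove them.

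The third step rescales. Set
\[
u:=\frac{U_K-M}{M+1}+\Bigl(1-\frac1{M+1}\Bigr)P_\eta .
\]
This is a convex combination of $\eta$-psh functions, so it is $\eta$-psh. From $P_\eta\le U_K\le P_\eta+M$ we get
\[
P_\eta-\frac{M}{M+1}\le u\le P_\eta,
\]
so $u$ is admissible: $P_\eta-1\le u\le P_\eta$. By multilinearity and positivity of the nonpluripolar product,
\[
\langle(\eta+dd^cu)^n\rangle\ge (M+1)^{-n}\langle(\eta+dd^cU_K)^n\rangle .
\]
Integrating over $K$ and using the second step gives
\[
\operatorname{Cap}_\eta(K)\ge (M+1)^{-n}\operatorname{Vol}(\{\eta\}),
\]
which is \eqref{eq-relative-at}.
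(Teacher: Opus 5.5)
Your argument is correct and is essentially the proof of \cite[Lemma~4.2]{BEGZ10}, which the paper simply cites before applying $\max\{1,M_{K,\eta}\}\le M_{K,\eta}+1$. Your single convex combination with weight $1/(M+1)$ reaches the $(M+1)^{-n}$ bound directly, without the case split $M\le1$ versus $M\ge1$.

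One aside should be deleted: ``$U_K=0$ quasi-everywhere on $K$'' is false in general when $\eta$ is not semipositive. For example, for $K=X$ one has $U_K=P_\eta$, which is strictly negative on any open set where $\eta$ is negative definite. Nothing in your argument depends on this claim, since you only use the full mass of $U_K$ and the vanishing of $\langle(\eta+dd^cU_K)^n\rangle$ on $X\setminus K$.
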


\begin{proof}
By~\cite[Lemma 4.2]{BEGZ10},
\[
\left(
\frac{\operatorname{Vol}(\{\eta\})}
{\operatorname{Cap}_\eta(K)}
\right)^{1/n}
\le
\max\{1,M_{K,\eta}\}.
\]
Since $M_{K,\eta}\ge0$, we have
$\max\{1,M_{K,\eta}\}\le M_{K,\eta}+1$, and~\eqref{eq-relative-at} follows.
\end{proof}

\begin{proposition}
For a Borel set $E\subseteq X$, put
\[
\operatorname{Vol}_{dV}(E):=\int_E dV.
\]
There are constants $a>0$ and $C>0$, independent of $\varepsilon$,
such that, for any Borel set $E\subseteq X$ with
$\operatorname{cap}_{\theta_\varepsilon}(E)>0$,
\begin{equation}\label{eq-exponential-capacity}
\operatorname{Vol}_{dV}(E)
\leq
C\exp\left(
-a\,\operatorname{cap}_{\theta_\varepsilon}(E)^{-1/n}
\right).
\end{equation}
If $\operatorname{cap}_{\theta_\varepsilon}(E)=0$, then
$\operatorname{Vol}_{dV}(E)=0$.

Consequently, for each $N>0$ there is $C_N>0$, independent of
$\varepsilon$, such that
\begin{equation}\label{eq-power-capacity}
\operatorname{Vol}_{dV}(E)
\leq
C_N\operatorname{cap}_{\theta_\varepsilon}(E)^N
\end{equation}
for any Borel set $E\subseteq X$.
\end{proposition}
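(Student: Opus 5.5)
The plan is to follow the proof of \cite[Proposition 4.3]{BEGZ10}, combining Lemma~\ref{lem-relative-at} with the uniform Skoda estimate of Lemma~\ref{lem-uniform-skoda}. Throughout we write $\eta=\theta_\varepsilon$; this class is K\"ahler, hence big, so both lemmas apply.

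\emph{Reduction to compact sets.} The measure $\operatorname{Vol}_{dV}$ is inner regular, and $\operatorname{Cap}_\eta$ is monotone. So it suffices to prove \eqref{eq-exponential-capacity} for compact $K\subseteq E$, since $\operatorname{cap}_\eta(K)\le\operatorname{cap}_\eta(E)$ and the right-hand side is increasing in the capacity. For a compact $K$ there are two cases.
\begin{itemize}
\item If $K$ is pluripolar, then $\operatorname{Vol}_{dV}(K)=0$, because pluripolar sets are Lebesgue-null.
\item If $K$ is nonpluripolar, Lemma~\ref{lem-relative-at} gives $M_{K,\eta}+1\ge \operatorname{cap}_\eta(K)^{-1/n}$.
\end{itemize}
This also handles the case $\operatorname{cap}_\eta(E)=0$. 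Indeed, Lemma~\ref{lem-relative-at} shows that every compact nonpluripolar $K$ has positive capacity, so every compact subset of $E$ is pluripolar and hence null, which forces $\operatorname{Vol}_{dV}(E)=0$.

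\emph{Main estimate.} Fix a compact nonpluripolar $K$. Then $U_{K,\eta}\in\operatorname{PSH}(X,\eta)$ (this is the standard fact that the envelope is not identically $+\infty$ for nonpluripolar $K$, used in \cite{BEGZ10}). Set $u:=U_{K,\eta}-M_{K,\eta}$, so that $\sup_X u=0$. On $K$ we have $U_{K,\eta}\le0$ outside the pluripolar set where the usc regularization differs from the sup, which is $dV$-null. Hence $u\le -M_{K,\eta}$ $dV$-a.e.\ on $K$. Lemma~\ref{lem-uniform-skoda}, whose constants $a,C$ do not depend on $\varepsilon$, then gives
\[
e^{aM_{K,\eta}}\operatorname{Vol}_{dV}(K)\le\int_X e^{-au}\,dV\le C .
\]
Therefore
\[
\operatorname{Vol}_{dV}(K)\le Ce^{-aM_{K,\eta}}\le Ce^{a}\exp\bigl(-a\operatorname{cap}_\eta(K)^{-1/n}\bigr),
\]
which is \eqref{eq-exponential-capacity} with constant $Ce^{a}$. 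This constant is independent of $\varepsilon$, because the constant in Lemma~\ref{lem-relative-at} depends only on $n$.

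\emph{Power estimate.} \eqref{eq-power-capacity} follows from the elementary bound $e^{-as}\le C'_N s^{-nN}$ for $s>0$, applied with $s=\operatorname{cap}^{-1/n}$. The zero-capacity case was handled above.

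The step I expect to be the main obstacle is the main estimate. The point is to use only the $\varepsilon$-independent inputs (the uniform Skoda bound and the dimensional constant in Lemma~\ref{lem-relative-at}), and to justify carefully that $U_{K,\eta}\le0$ holds a.e.\ on $K$ and that $U_{K,\eta}$ is a genuine $\eta$-psh function.
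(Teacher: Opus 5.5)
Your proposal is correct and follows the paper's proof essentially step by step. You apply the uniform Skoda bound to $U_{K,\theta_\varepsilon}-M_{K,\theta_\varepsilon}$ to get $\operatorname{Vol}_{dV}(K)\le Ce^{-aM}$, convert $M$ into capacity via Lemma~\ref{lem-relative-at}, and pass from compact sets to Borel sets by inner regularity and monotonicity of capacity. Your explicit argument for the zero-capacity case (every compact subset of $E$ is then pluripolar, hence $dV$-null) spells out what the paper leaves implicit.
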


\begin{proof}
We first prove~\eqref{eq-exponential-capacity} for a compact
nonpluripolar set $K\subseteq X$. Put
\[
U:=U_{K,\theta_\varepsilon},
\qquad
M:=\sup_XU.
\]
Then $U-M\in\operatorname{PSH}(X,\theta_\varepsilon)$ and
$\sup_X(U-M)=0$. Since $U\leq0$ almost everywhere on $K$,
Lemma~\ref{lem-uniform-skoda} gives
\[
e^{aM}\operatorname{Vol}_{dV}(K)
\leq
\int_K e^{-a(U-M)}\,dV
\leq C.
\]
Thus
\[
\operatorname{Vol}_{dV}(K)\leq Ce^{-aM}.
\]
By Lemma~\ref{lem-relative-at},
\[
M+1
\geq
\operatorname{cap}_{\theta_\varepsilon}(K)^{-1/n}.
\]
After absorbing the factor $e^a$ into $C$, we obtain~\eqref{eq-exponential-capacity} for compact nonpluripolar sets.

If $K$ is compact and pluripolar, then
$\operatorname{cap}_{\theta_\varepsilon}(K)=0$ and
$\operatorname{Vol}_{dV}(K)=0$.
Now let $E\subseteq X$ be Borel. For any compact set
$K\subseteq E$, we have
\[
\operatorname{cap}_{\theta_\varepsilon}(K)
\leq
\operatorname{cap}_{\theta_\varepsilon}(E).
\]
By inner regularity,
\[
\operatorname{Vol}_{dV}(E)
=
\sup_{\substack{K\subseteq E\\ K\ \text{compact}}}
\operatorname{Vol}_{dV}(K).
\]
Taking the supremum over compact subsets $K\subseteq E$ proves~\eqref{eq-exponential-capacity}.

Finally,
$0\leq\operatorname{cap}_{\theta_\varepsilon}(E)\leq1$, and for
each $N>0$ there is a constant $C_N>0$ such that
\[
e^{-ax^{-1/n}}\leq C_Nx^N,
\qquad 0<x\leq1.
\]
This proves~\eqref{eq-power-capacity}.
\end{proof}

The proof follows the capacity estimates and sublevel-set argument
in~\cite[\S\S~4.1--4.2]{BEGZ10}.
We use normalized capacities and measures, and track the constants
to verify that they are independent of $\varepsilon$.

\begin{proposition}\label{prop-uniform-czero}
Fix $p>1$ and $A>0$.
Suppose that, for each $0<\varepsilon\leq1$, the following
conditions hold.
\begin{enumerate}[$(1)$]
\item
$f_\varepsilon\in L^p(dV)$ is nonnegative,
$\int_X f_\varepsilon\,dV=1$, and
$\|f_\varepsilon\|_{L^p(dV)}\leq A$.
\item
$\varphi_\varepsilon\in\operatorname{PSH}(X,\theta_\varepsilon)$
has full Monge--Amp\`ere mass,
$\sup_X\varphi_\varepsilon=0$, and
\begin{equation*}
\left\langle
(\theta_\varepsilon+dd^c\varphi_\varepsilon)^n
\right\rangle
=
\mathcal V_\varepsilon f_\varepsilon\,dV.
\end{equation*}
\end{enumerate}
Then there is a constant $C>0$, depending only on
$(X,\omega,\theta,dV)$, $p$, and $A$, such that
\begin{equation}\label{eq-uniform-czero}
P_{\theta_\varepsilon}-C
\leq
\varphi_\varepsilon
\leq
P_{\theta_\varepsilon}
\end{equation}
on $X$ for each $0<\varepsilon\leq1$.
\end{proposition}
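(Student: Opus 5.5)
Upper bound: since $\sup_X\varphi_\varepsilon=0$ and $P_{\theta_\varepsilon}$ has minimal singularities, $\varphi_\varepsilon\le P_{\theta_\varepsilon}+\sup_X\varphi_\varepsilon=P_{\theta_\varepsilon}$. The work is the lower bound, which I will obtain by Ko{\l}odziej's sublevel-set argument in the form of \cite[\S~4.1--4.2]{BEGZ10}, keeping track of constants.

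\textbf{Step 1: measure domination by capacity.} Put $\mu_\varepsilon:=f_\varepsilon\,dV$. By H\"older and~\eqref{eq-power-capacity} with $N:=2q$, where $1/p+1/q=1$, for every Borel $E$
\[
\mu_\varepsilon(E)\le A\,\operatorname{Vol}_{dV}(E)^{1/q}\le A\,C_{2q}^{1/q}\operatorname{cap}_{\theta_\varepsilon}(E)^{2}.
\]
So $\mu_\varepsilon(E)\le C_1\operatorname{cap}_{\theta_\varepsilon}(E)^{2}$, with $C_1$ independent of $\varepsilon$. The exponent $2>1$ is what makes the later iteration converge.

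\textbf{Step 2: comparison inequality for sublevel sets.} For $t>0$ and $0<s\le1$, the comparison principle for full-mass functions \cite[Lemma~4.5 / proof of Theorem~4.1]{BEGZ10} gives
\[
s^n\operatorname{Cap}_{\theta_\varepsilon}\{\varphi_\varepsilon<P_{\theta_\varepsilon}-t-s\}\le\int_{\{\varphi_\varepsilon<P_{\theta_\varepsilon}-t\}}\langle(\theta_\varepsilon+dd^c\varphi_\varepsilon)^n\rangle .
\]
This needs $\varphi_\varepsilon$ to be of full mass, which is a hypothesis, and uses test functions $P_{\theta_\varepsilon}-1\le u\le P_{\theta_\varepsilon}$. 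Dividing by $\mathcal V_\varepsilon=\operatorname{Vol}(\{\theta_\varepsilon\})$ (the class is K\"ahler, so the volume is $\int\{\theta_\varepsilon\}^n$), and using the equation $\langle(\theta_\varepsilon+dd^c\varphi_\varepsilon)^n\rangle=\mathcal V_\varepsilon\mu_\varepsilon$, set $g(t):=\operatorname{cap}_{\theta_\varepsilon}\{\varphi_\varepsilon<P_{\theta_\varepsilon}-t\}^{1/n}$. Combining with Step 1 gives
\[
s\,g(t+s)\le C_1^{1/n}g(t)^{2}.
\]
The normalization by $\mathcal V_\varepsilon$ is exactly what removes any dependence on the possibly degenerating volume.

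\textbf{Step 3: De Giorgi/Ko{\l}odziej iteration.} \cite[Lemma~2.4]{EGZ09}, as used in \cite[Lemma~4.6]{BEGZ10}, says that a decreasing right-continuous $g$ with $s\,g(t+s)\le B\,g(t)^{1+\delta}$ vanishes for $t\ge t_0+C(B,\delta)$ once $g(t_0)$ is small. So I need a uniform $t_0$ at which $g(t_0)$ is small.

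\textbf{Step 4: the starting level $t_0$, the main obstacle.} The bound must be independent of $\varepsilon$, although $P_{\theta_\varepsilon}$ varies. I plan to argue as follows. On $\{\varphi_\varepsilon<P_{\theta_\varepsilon}-t\}$ we have $\varphi_\varepsilon<-t$. Also $\varphi_\varepsilon\in\operatorname{PSH}(X,B\omega)$ with $\sup_X\varphi_\varepsilon=0$, so it lies in a compact family, and $\int_X|\varphi_\varepsilon|\,dV\le C$ uniformly by the argument of Lemma~\ref{lem-uniform-skoda}. Then, using the bound $\operatorname{Cap}_{\theta_\varepsilon}\{\varphi<P_{\theta_\varepsilon}-t\}\le\frac{C}{t}\bigl(\int|\varphi|\,\langle(\theta_\varepsilon+dd^cP)^{n}\rangle+\dots\bigr)$ from \cite[Lemma~4.4/Proposition~4.6]{BEGZ10}, normalized by $\mathcal V_\varepsilon$, I get $g(t)^n\le C/t$ with $C$ uniform. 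Here the uniform control of $\int|\varphi_\varepsilon|\,d\mu$ for test measures is obtained through the energy estimate on $\int(-\varphi_\varepsilon)\,\mathcal V_\varepsilon^{-1}\langle(\theta_\varepsilon+dd^c\varphi_\varepsilon)^n\rangle\le\|f_\varepsilon\|_{L^p}\|\varphi_\varepsilon\|_{L^q(dV)}$, which is bounded by Lemma~\ref{lem-uniform-skoda}.

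If this direct capacity bound proves delicate, I will instead choose $t_0$ via Step 1 and Chebyshev on $\mu_\varepsilon$ together with a first application of Step 2 with $s=1$, using $\mu_\varepsilon\{\varphi_\varepsilon<-t\}\le A\,\operatorname{Vol}_{dV}\{\varphi_\varepsilon<-t\}^{1/q}\le A\,C^{1/q}e^{-at/q}$. This gives $g(t_0+1)^n\le C e^{-at_0/q}$, which is uniformly small for a fixed $t_0$. I expect this second route to be the one that works cleanly.

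Steps 3 and 4 together give $\varphi_\varepsilon\ge P_{\theta_\varepsilon}-C$ off a set of capacity zero, hence everywhere, since pluripolar sets have Lebesgue measure zero and psh inequalities that hold almost everywhere hold everywhere.
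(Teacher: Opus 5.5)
Your argument is correct and is essentially the paper's proof. It uses the same capacity domination $\mu_\varepsilon(E)\le C_1\operatorname{cap}_{\theta_\varepsilon}(E)^2$ and the same comparison inequality for $E_t=\{\varphi_\varepsilon<P_{\theta_\varepsilon}-t\}$ normalized by $\mathcal V_\varepsilon$. It runs the same De Giorgi-type iteration, which the paper carries out by an explicit induction instead of citing EGZ09. It then passes from capacity zero to $dV$-measure zero via \eqref{eq-power-capacity}, and to an everywhere inequality by the mean value inequality.

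For the starting level, use your second route, the Skoda tail bound $\operatorname{Vol}_{dV}\{\varphi_\varepsilon<-t\}\le Ce^{-at}$ with H\"older and Step 2 at $s=1$. It differs from the paper only in that the paper gets $\mu_\varepsilon(E_t)\le C_0/t$ by Chebyshev from $\int_X(P_{\theta_\varepsilon}-\varphi_\varepsilon)\,d\mu_\varepsilon\le C_0$. Drop your first route: an energy bound against $\mu_\varepsilon$ gives no control of $\int|\varphi_\varepsilon|$ against the test measures $\langle(\theta_\varepsilon+dd^cu)^n\rangle$.
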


\begin{proof}
Fix $0<\varepsilon\leq1$, and put
$P_\varepsilon:=P_{\theta_\varepsilon}$ and
$d\mu_\varepsilon:=f_\varepsilon\,dV$.
Since $\varphi_\varepsilon\leq0$, the definition of the envelope
gives $\varphi_\varepsilon\leq P_\varepsilon\leq0$.
Also, $P_\varepsilon$ is bounded for each fixed $\varepsilon$:
the class $\{\theta_\varepsilon\}$ is K\"ahler, so there is a
smooth $\theta_\varepsilon$-psh function with supremum zero,
and this function lies below $P_\varepsilon$.
No uniform bound for $P_\varepsilon$ is used below.

\smallskip
\noindent\textbf{Step 1. Uniform preliminary estimates.}
Put $p':=p/(p-1)$.
By H\"older's inequality and~\eqref{eq-power-capacity} with $N=2p'$, for any Borel set
$E\subseteq X$ we have
\begin{equation}\label{eq-measure-capacity}
\mu_\varepsilon(E)
\leq
A\operatorname{Vol}_{dV}(E)^{1/p'}
\leq
A_1\operatorname{cap}_{\theta_\varepsilon}(E)^2,
\end{equation}
where $A_1\geq1$ is independent of $\varepsilon$.

For a fixed $p'>0$ and $a>0$, there is a constant $B$ such that $x^{p'}\leq Be^{ax}$ for $x\ge0$, then Lemma~\ref{lem-uniform-skoda} gives a uniform bound for
$\|\varphi_\varepsilon\|_{L^{p'}(dV)}$.
Since $0\leq P_\varepsilon-\varphi_\varepsilon
\leq-\varphi_\varepsilon$,
H\"older's inequality yields
\begin{equation}\label{eq-initial-energy}
\int_X(P_\varepsilon-\varphi_\varepsilon)\,d\mu_\varepsilon
\leq
\|f_\varepsilon\|_{L^p(dV)}
\|\varphi_\varepsilon\|_{L^{p'}(dV)}
\leq C_0,
\end{equation}
where $C_0\geq1$ is independent of $\varepsilon$.

For $s>0$, put
\[
E_s^\varepsilon:=\{\varphi_\varepsilon<P_\varepsilon-s\}.
\]
Since $P_\varepsilon-\varphi_\varepsilon>s$ on
$E_s^\varepsilon$,~\eqref{eq-initial-energy} gives
$\mu_\varepsilon(E_s^\varepsilon)\leq C_0/s$.

\smallskip
\noindent\textbf{Step 2. Comparison of sublevel sets.}
We claim that, for $s>0$ and $0<t\leq1$,
\begin{equation}\label{eq-capacity-comparison}
t^n\operatorname{cap}_{\theta_\varepsilon}
(E_{s+t}^\varepsilon)
\leq
\mu_\varepsilon(E_s^\varepsilon).
\end{equation}

Take any $u\in\operatorname{PSH}(X,\theta_\varepsilon)$
satisfying $P_\varepsilon-1\leq u\leq P_\varepsilon$,
and put $v:=(1-t)P_\varepsilon+tu-s$.
Then $P_\varepsilon-s-t\leq v\leq P_\varepsilon-s$, so
\[
E_{s+t}^\varepsilon
\subseteq
\{\varphi_\varepsilon<v\}
\subseteq
E_s^\varepsilon.
\]
Moreover, positivity of the mixed Monge--Amp\`ere products gives
\[
\left\langle(\theta_\varepsilon+dd^cv)^n\right\rangle
\geq
t^n\left\langle(\theta_\varepsilon+dd^cu)^n\right\rangle.
\]
Since $\varphi_\varepsilon$ has full Monge--Amp\`ere mass,
the comparison principle
(\cite[Corollary 2.3]{BEGZ10}) yields
\[
\begin{aligned}
t^n\int_{E_{s+t}^\varepsilon}
\left\langle(\theta_\varepsilon+dd^cu)^n\right\rangle
&\leq
\int_{\{\varphi_\varepsilon<v\}}
\left\langle(\theta_\varepsilon+dd^cv)^n\right\rangle\\
&\leq
\int_{\{\varphi_\varepsilon<v\}}
\left\langle
(\theta_\varepsilon+dd^c\varphi_\varepsilon)^n
\right\rangle\\
&\leq
\mathcal V_\varepsilon
\mu_\varepsilon(E_s^\varepsilon).
\end{aligned}
\]
Taking the supremum over such $u$ and dividing by
$\mathcal V_\varepsilon>0$ proves~\eqref{eq-capacity-comparison}.
In particular, taking $t=1$ gives
\[
\operatorname{cap}_{\theta_\varepsilon}
(E_{s+1}^\varepsilon)
\leq
\frac{C_0}{s}.
\]

\smallskip
\noindent\textbf{Step 3. Vanishing of a sublevel capacity.}
Define
$g_\varepsilon(s):=
\operatorname{cap}_{\theta_\varepsilon}
(E_s^\varepsilon)^{1/n}$.
This function is nonincreasing.
Combining~\eqref{eq-measure-capacity} and~\eqref{eq-capacity-comparison}, we obtain
\begin{equation}\label{eq-degiorgi-recursion}
t\,g_\varepsilon(s+t)
\leq
B\,g_\varepsilon(s)^2,
\end{equation}
where $B:=A_1^{1/n}$ is independent of $\varepsilon$.

Choose $s_0:=2+(2B)^nC_0$.
The estimate at the end of Step 2 gives
$g_\varepsilon(s_0)\leq(2B)^{-1}$.
For integers $j\geq0$, set
$s_j:=s_0+2(1-2^{-j})$.
Then $s_{j+1}-s_j=2^{-j}\leq1$.

We prove by induction that
$g_\varepsilon(s_j)\leq2^{-j}/(2B)$.
The case $j=0$ holds by the choice of $s_0$.
If the estimate holds for $j$, then~\eqref{eq-degiorgi-recursion} with $t=2^{-j}$ gives
\[
g_\varepsilon(s_{j+1})
\leq
B\,2^j g_\varepsilon(s_j)^2
\leq
\frac{2^{-(j+1)}}{2B}.
\]
This proves the induction.
Since $s_j<s_0+2$ and $g_\varepsilon$ is nonincreasing,
\[
0\leq g_\varepsilon(s_0+2)
\leq g_\varepsilon(s_j)
\leq\frac{2^{-j}}{2B}.
\]
Letting $j\to+\infty$, we obtain
$\operatorname{cap}_{\theta_\varepsilon}
(E_{s_0+2}^\varepsilon)=0$.

\smallskip
\noindent\textbf{Step 4. The pointwise lower bound.}
By~\eqref{eq-power-capacity},
$\operatorname{Vol}_{dV}(E_{s_0+2}^\varepsilon)=0$, so
$\varphi_\varepsilon\geq P_\varepsilon-s_0-2$ almost everywhere. Choose  arbitrary point $x\in X$ and $U_{x}$ an open neighborhood of $x$ such that $\theta_{\varepsilon}=dd^c\rho_{\varepsilon}$ on $U_x$. Then $\varphi_\varepsilon+\rho_{\varepsilon}$ and $P_\varepsilon+\rho_{\varepsilon}$ are plurisubharmonic functions. By mean value inequality,
\begin{equation}\label{mean value}
P_\varepsilon(x)+\rho_{\varepsilon}(x)-s_0-2\leq\dfrac{1}{\text{vol}(U_x)}\int_{U_{x}}(P_\varepsilon+\rho_{\varepsilon}-s_0-2)\leq\dfrac{1}{\text{vol}(U_x)}\int_{U_{x}}(\varphi_\varepsilon+\rho_{\varepsilon}). \end{equation}
Let $U_{x}\to\{x\}$, then the right hand of (\ref{mean value}) converges to $\varphi_\varepsilon(x)+\rho_{\varepsilon}(x)$. So we get $\varphi_\varepsilon\geq P_\varepsilon-s_0-2$ everywhere.
Together with $\varphi_\varepsilon\leq P_\varepsilon$, this proves~\eqref{eq-uniform-czero} with $C:=s_0+2$, independent of $\varepsilon$.
\end{proof}

\begin{corollary}\label{cor-section-adapted}
Assume the conditions and notation of
Proposition~\ref{prop-uniform-czero}.
Let $L$ be a nef holomorphic line bundle on $X$ with
$2\pi c_1(L)=\{\theta\}$, and choose a smooth Hermitian metric $h$
on $L$ with $\sqrt{-1}\,\Theta_h(L)=\theta$.
For an integer $m\geq 1$ and a nonzero section
$s\in H^0(X,L^{\otimes m})$, put
\[
u_s:=\frac1m\log|s|^2_{h^{\otimes m}},
\]
where the norm is induced by $h$.
Normalize $u_s$ by an additive constant so that $u_s\leq0$.
Then, with the same constant $C$ as in
Proposition~\ref{prop-uniform-czero}, we have
\begin{equation*}
\varphi_\varepsilon\geq u_s-C
\end{equation*}
on $X$ for each $0<\varepsilon\leq1$.
\end{corollary}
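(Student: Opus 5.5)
The plan is to combine Proposition~\ref{prop-uniform-czero} with the minimal-singularity property of the envelope. Once we know that $u_s$ is a competitor in the envelope defining $P_{\theta_\varepsilon}$, the result follows immediately.

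\textbf{Step 1: $u_s$ is $\theta$-psh.} Locally write $s=g\,e^{\otimes m}$ with $e$ a local frame of $L$ and $g$ holomorphic. Then
\[
u_s=\frac1m\log|g|^2+\log|e|_h^2 .
\]
Since $\sqrt{-1}\,\Theta_h(L)=-dd^c\log|e|_h^2=\theta$ with the paper's convention $dd^c=\sqrt{-1}\partial\bar\partial$, we get
\[
\theta+dd^cu_s=\frac1m\,dd^c\log|g|^2\ge0
\]
in the sense of currents. The function $u_s$ is locally a psh function plus a smooth one, it is upper semicontinuous, and it is not identically $-\infty$ because $s\neq0$. Hence $u_s\in\operatorname{PSH}(X,\theta)$. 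Adding a constant keeps this property, so the normalized $u_s$ is still $\theta$-psh and satisfies $u_s\le0$.

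\textbf{Step 2: comparison with the envelope.} Because $\omega\ge0$, we have
\[
\theta_\varepsilon+dd^cu_s=\theta+dd^cu_s+\varepsilon\omega\ge0,
\]
so $u_s\in\operatorname{PSH}(X,\theta_\varepsilon)$ with $u_s\le0$. Thus $u_s$ belongs to the family whose upper envelope defines $P_{\theta_\varepsilon}$, which gives $u_s\le P_{\theta_\varepsilon}$ on $X$.

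\textbf{Step 3: conclusion.} By Proposition~\ref{prop-uniform-czero}, $\varphi_\varepsilon\ge P_{\theta_\varepsilon}-C$ with $C$ independent of $\varepsilon$. Combining this with Step 2 gives
\[
\varphi_\varepsilon\ge u_s-C
\]
everywhere on $X$, including on the zero set of $s$, where $u_s=-\infty$.

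\textbf{Main point of care.} There is no real obstacle. The only thing to check is the sign convention linking the curvature to $\theta$, so that $u_s$ is $\theta$-psh and not $(-\theta)$-psh. Nefness of $L$ is not needed beyond being part of the standing hypotheses that make Proposition~\ref{prop-uniform-czero} applicable.
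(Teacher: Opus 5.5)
Your proposal is correct and follows the paper's proof. You verify $\theta+dd^cu_s=\frac1m dd^c\log|g|^2\geq0$ in a local frame, where the paper cites Poincar\'e--Lelong; then, as in the paper, $u_s\leq P_{\theta_\varepsilon}$ by the envelope definition, and the lower bound $\varphi_\varepsilon\geq P_{\theta_\varepsilon}-C$ from Proposition~\ref{prop-uniform-czero} gives the claim.
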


\begin{proof}
The Poincar\'e--Lelong formula gives
\[
\theta+dd^cu_s
=
\frac{2\pi}{m}[\operatorname{div}(s)]
\geq0.
\]
Thus $u_s\in\operatorname{PSH}(X,\theta_\varepsilon)$ and
$u_s\leq0$, so $u_s\leq P_{\theta_\varepsilon}$.
Proposition~\ref{prop-uniform-czero} therefore gives
$\varphi_\varepsilon\geq P_{\theta_\varepsilon}-C\geq u_s-C$.
\end{proof}

\section{Extension in smooth K\"ahler families}
\label{sect-exten-smooth}

In this section, we prove Theorem~\ref{thm-main}
(=Theorem~\ref{prop-section-extension}), which gives an affirmative
answer to Problem~\ref{prop-matsu} in the smooth case. The basic setup
is as follows.
Let
\[
f:X\to\Delta
\]
be a smooth proper surjective K\"ahler morphism of complex
manifolds with connected fibers $X_t:=f^{-1}(t)$, and assume
that $K_{X_t}$ is nef for each $t\in\Delta$.
Put $n:=\dim X_t\geq1$.

The constructions below are local near $X_0$.
When shrinking the disk, we restrict $f$ accordingly and retain
the same notation; the passage to the original disk is made
in the proof of Theorem~\ref{prop-section-extension}.
Choose the fiberwise positive smooth real $d$-closed
$(1,1)$-form $\Omega$ supplied by the K\"ahler morphism assumption.
By properness, after shrinking $\Delta$ and replacing $\Omega$
by
\[
\Omega+C f^*(\sqrt{-1}\,dt\wedge d\bar t)
\]
for a sufficiently large constant $C>0$, we may assume that
$\Omega$ is a K\"ahler form on $X$.

Using the coordinate $t$ on $\Delta$, we identify
$K_{X/\Delta}$ with $K_X$.
Fix a smooth reference Hermitian metric $h_{X}$ on
$K_{X/\Delta}$, and put
\[
\theta:=\sqrt{-1}\,\Theta_{h_X}(K_{X/\Delta}),
\qquad
\theta_t:=\theta|_{X_t},
\qquad
\Omega_t:=\Omega|_{X_t}.
\]
Let $dV_t$ be the smooth positive volume form on $X_t$
induced by $h_{X}|_{X_t}$; these forms constitute a smooth
relative volume form $dV$.
In relative holomorphic coordinates $(z_1,\ldots,z_n,t)$,
put $\eta:=dz_1\wedge\cdots\wedge dz_n$ and write
\[
|\eta|_{h_X}^2=e^{-\Phi_X}.
\]
Then, with $\eta_t:=\eta|_{X_t}$,
\[
dV_t=e^{\Phi_X|_{X_t}}\,\sqrt{-1}^{n^2}\eta_t\wedge\overline{\eta_t}.
\]
Thus, in these coordinates,
\[
\theta=dd^c\Phi_X,
\qquad
\theta_t=dd^c_{X_t}(\Phi_X|_{X_t}).
\]
Moreover, we have the nef class
\[
\{\theta_t\}=2\pi c_1(K_{X_t}).
\]

\subsection{Fiberwise twisted K\"ahler--Einstein metrics}

We first construct fiberwise twisted
K\"ahler--Einstein metrics.

\begin{lemma}
For each $\varepsilon>0$ and $t\in\Delta$, there is a
unique K\"ahler metric $\omega_{t,\varepsilon}$ on $X_t$
in the class $\{\theta_t\}+\varepsilon\{\Omega_t\}$
satisfying
\begin{equation}\label{eq-twisted-ke}
\operatorname{Ric}(\omega_{t,\varepsilon})
=
-\omega_{t,\varepsilon}+\varepsilon\Omega_t.
\end{equation}
For each fixed $\varepsilon>0$, these metrics depend
smoothly on $(x,t)\in X$.
\end{lemma}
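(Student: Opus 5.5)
We will reduce \eqref{eq-twisted-ke} to a complex Monge--Amp\`ere equation with a K\"ahler reference class, invoke Aubin--Yau for existence and uniqueness on each fiber, and use the implicit function theorem for smooth dependence on $t$. Fix $\varepsilon>0$. The form $\theta_t+\varepsilon\Omega_t$ need not be positive, so we first choose a K\"ahler reference in the class. Since $\{\theta_t\}$ is nef, Definition~\ref{def-nef} gives $\psi\in C^\infty(X_t)$ with $\theta_t+dd^c\psi\ge-\tfrac{\varepsilon}{2}\Omega_t$. Then $\hat\omega_t:=\theta_t+\varepsilon\Omega_t+dd^c\psi$ is a K\"ahler form on $X_t$.

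Recall that $dV_t=e^{\Phi_X}\sqrt{-1}^{n^2}\eta_t\wedge\bar\eta_t$ and $\theta_t=dd^c\Phi_X|_{X_t}$. Hence $-\operatorname{Ric}(dV_t)=\theta_t$, where $\operatorname{Ric}(dV_t)$ means $-dd^c\log$ of the local density of $dV_t$. We seek $\omega=\theta_t+\varepsilon\Omega_t+dd^c\varphi>0$ solving
\[
(\theta_t+\varepsilon\Omega_t+dd^c\varphi)^n=e^{\varphi}\,dV_t .
\]
Taking $-dd^c\log$ of both sides yields
\[
\operatorname{Ric}(\omega)=-dd^c\varphi-\theta_t=-\omega+\varepsilon\Omega_t,
\]
which is \eqref{eq-twisted-ke}. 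Conversely, any solution of \eqref{eq-twisted-ke} in the class $\{\theta_t+\varepsilon\Omega_t\}$ satisfies this equation after normalizing its potential: the two Ricci forms agree, so the ratio $\omega^n/(e^{\varphi}dV_t)$ has pluriharmonic logarithm, hence is constant, and the constant is absorbed into $\varphi$.

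Writing $\varphi=\psi+u$, the equation becomes
\[
(\hat\omega_t+dd^cu)^n=e^{u}e^{\psi}dV_t ,
\]
a Monge--Amp\`ere equation of negative-curvature type on the compact K\"ahler manifold $(X_t,\hat\omega_t)$. The Aubin--Yau theorem gives a unique smooth solution $u$ with $\hat\omega_t+dd^cu>0$. Uniqueness follows from the maximum principle: at a maximum of $u_1-u_2$ we have $(\cdot+dd^cu_1)^n\le(\cdot+dd^cu_2)^n$, so $u_1\le u_2$ there, hence everywhere, and symmetrically. Uniqueness of $\omega_{t,\varepsilon}$ then follows from the correspondence between metrics and normalized potentials described above.

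For smooth dependence on $t$, trivialize the family near $t_0$ by a smooth diffeomorphism $X_t\cong X_{t_0}$ (Ehresmann). This turns the equation into a smooth family of operators
\[
F(t,\varphi)=\log\frac{(\theta_t+\varepsilon\Omega_t+dd^c\varphi)^n}{dV_t}-\varphi
\]
on $C^{k,\alpha}(X_{t_0})$. Because the complex structure varies with $t$, $dd^c$ depends on $t$ as well, but smoothly. The linearization in $\varphi$ at the solution is $\Delta_{\omega_{t_0,\varepsilon}}-1$, which is invertible between H\"older spaces. The implicit function theorem then gives a solution $\varphi_t$ depending smoothly on $t$, and by uniqueness it coincides with the fiberwise solution. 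Elliptic regularity upgrades this to joint smoothness in $(x,t)$, by bootstrapping in all $k$ or by differentiating the equation in $t$.

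The main obstacle is this last step: the implicit function theorem has to be applied in the moving complex structure, with $dd^c_{X_t}$ pulled back by the trivialization. This requires checking that the pulled-back operators depend smoothly on $t$ as maps between H\"older spaces, after which the argument is standard. The case $t$ complex, i.e.\ two real parameters, causes no additional difficulty.
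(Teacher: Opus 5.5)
Your proposal is correct and follows essentially the same route as the paper: choose a K\"ahler reference form in the class $\{\theta_t\}+\varepsilon\{\Omega_t\}$, rewrite the twisted equation as $(\chi+dd^cv)^n=e^{v+F}\chi^n$, use Aubin--Yau for existence and the maximum principle for uniqueness, and get smooth dependence from the Banach-space implicit function theorem in a smooth trivialization, where the linearization is $\Delta_{\omega_{t_0,\varepsilon}}-1$. Your explicit check that every solution of the twisted K\"ahler--Einstein equation in the class arises from a normalized potential supplies the detail behind the equivalence that the paper asserts without proof.
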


\begin{proof}
Fix $\varepsilon>0$.
For each $t\in\Delta$, nefness of $K_{X_t}$ implies that
$\{\theta_t\}+\varepsilon\{\Omega_t\}$ is a K\"ahler
class, although the particular representative
$\theta_t+\varepsilon\Omega_t$ need not be positive.
Choose $f_{t,\varepsilon}\in C^\infty(X_t,\mathbb R)$
such that
\[
\chi_{t,\varepsilon}
:=
\theta_t+\varepsilon\Omega_t+dd^cf_{t,\varepsilon}
>0,
\]
and put
\[
F_{t,\varepsilon}
:=
f_{t,\varepsilon}
+\log\frac{dV_t}{\chi_{t,\varepsilon}^n}.
\]
Write $\omega_{t,\varepsilon}
=\chi_{t,\varepsilon}+dd^cv$ and fix the additive
constant in $v$. Then equation~\eqref{eq-twisted-ke}
is equivalent to
\[
(\chi_{t,\varepsilon}+dd^cv)^n
=
e^{v+F_{t,\varepsilon}}\chi_{t,\varepsilon}^n,
\qquad
\chi_{t,\varepsilon}+dd^cv>0.
\]
By~\cite[Theorem 4]{Yau78}, this equation admits
a smooth solution $v_{t,\varepsilon}$, which is unique
by the maximum principle.
Thus
$\omega_{t,\varepsilon}
:=\chi_{t,\varepsilon}+dd^cv_{t,\varepsilon}$
is the unique K\"ahler metric in the prescribed class
satisfying~\eqref{eq-twisted-ke}.

To prove smooth dependence, fix $t_0\in\Delta$ and
choose a smooth local trivialization of the family near
$t_0$.
Extend $f_{t_0,\varepsilon}$ smoothly to nearby fibers,
and use its restrictions as $f_{t,\varepsilon}$.
After shrinking the neighborhood of $t_0$, the forms
$\chi_{t,\varepsilon}$ remain positive.
Both $\chi_{t,\varepsilon}$ and $F_{t,\varepsilon}$
then depend smoothly on $t$.

Under this trivialization, consider the nonlinear operator
\[
\mathcal F_{t,\varepsilon}(v)
:=
\log
\frac{(\chi_{t,\varepsilon}+dd^cv)^n}
{\chi_{t,\varepsilon}^n}
-v-F_{t,\varepsilon},
\]
defined on the open set of real-valued potentials
satisfying $\chi_{t,\varepsilon}+dd^cv>0$.
Its derivative with respect to $v$ at the solution
$v_{t_0,\varepsilon}$ is
\[
D_v\mathcal F_{t_0,\varepsilon}
\big|_{v_{t_0,\varepsilon}}(w)
=
(\Delta_{\omega_{t_0,\varepsilon}}-1)w,
\]
where
$\Delta_{\omega}w:=\operatorname{tr}_{\omega}(dd^cw)$.
The maximum principle shows that this operator has
trivial kernel.
Since it is self-adjoint and elliptic on the compact
fiber $X_{t_0}$, elliptic Fredholm theory implies that
\[
\Delta_{\omega_{t_0,\varepsilon}}-1:
C^{k+2,\gamma}(X_{t_0},\mathbb R)
\longrightarrow
C^{k,\gamma}(X_{t_0},\mathbb R)
\]
is an isomorphism for each integer $k\geq0$ and
$0<\gamma<1$.
The Banach space implicit function theorem, together with elliptic
regularity, therefore yields a smooth local family of
solutions near $t_0$.
Fiberwise uniqueness implies that the resulting local
families of metrics agree on overlaps and thus define
a smooth family over $\Delta$.
\end{proof}

Following the notation of Section~\ref{sec-preli},
applied to the fiber $X_t$, we put
\begin{equation*}
\mathcal V_\varepsilon
:=
\int_{X_t}(\theta_t+\varepsilon\Omega_t)^n
=
\int_{X_t}\omega_{t,\varepsilon}^n
>0.
\end{equation*}
Since $\theta$ and $\Omega$ are $d$-closed on
$X$, fiber integration shows that
$\mathcal V_\varepsilon$ is independent of $t$.

Define the smooth potential
$\phi_{t,\varepsilon}
:=\log(\omega_{t,\varepsilon}^n/dV_t)$.
By~\eqref{eq-twisted-ke}, we have
\begin{equation*}
\begin{aligned}
\omega_{t,\varepsilon}
&=
\theta_t+\varepsilon\Omega_t+dd^c\phi_{t,\varepsilon},
\\
\omega_{t,\varepsilon}^n
&=
e^{\phi_{t,\varepsilon}}dV_t.
\end{aligned}
\end{equation*}
In particular, the second identity fixes the additive
constant in the potential.
Define the normalized potential
\begin{equation*}
\psi_{t,\varepsilon}
:=
\phi_{t,\varepsilon}-\log\mathcal V_\varepsilon.
\end{equation*}
Then
\[
\int_{X_t}e^{\psi_{t,\varepsilon}}dV_t=1.
\]

\begin{lemma}\label{lem-density-barrier}
After shrinking $\Delta$ around $0$, there is a constant
$C>0$ such that the following statements hold.
\begin{enumerate}[(i)]
\item\label{item-density-uniform-bounds}
For each $t\in\Delta$ and $0<\varepsilon\leq1$, we have
$\sup_{X_t}\psi_{t,\varepsilon}\leq C$ and
$\int_{X_t}|\psi_{t,\varepsilon}|\,dV_t\leq C$.

\item\label{item-density-section-barrier}
For each integer $m\geq1$ and each nonzero section
$s\in H^0(X_0,mK_{X_0})$, put
\[
u_s
:=
\frac1m\log|s|_{{h_X|_{X_0}^{\otimes m}}}^2
-
\sup_{X_0}
\left(
\frac1m\log|s|_{{{h_X|_{X_0}}^{\otimes m}}}^2
\right).
\]
Then, for each $0<\varepsilon\leq1$,
\begin{equation*}
\psi_{0,\varepsilon}\geq u_s-C.
\end{equation*}
In particular, the constant $C$ is independent of
$\varepsilon$, $m$, and $s$.
\end{enumerate}
\end{lemma}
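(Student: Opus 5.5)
The plan is to reduce everything to the pluripotential estimates of Section~\ref{sec-preli}, applied fiberwise with $\theta$ replaced by $\theta_t$, $\omega$ by $\Omega_t$, and $dV$ by $dV_t$. The key observation is that the normalized potential solves a Monge--Amp\`ere equation of the form considered in Proposition~\ref{prop-uniform-czero}:
\[
(\theta_t+\varepsilon\Omega_t+dd^c\psi_{t,\varepsilon})^n
=\omega_{t,\varepsilon}^n
=\mathcal V_\varepsilon e^{\psi_{t,\varepsilon}}\,dV_t .
\]
Note that $\mathcal V_\varepsilon$ may tend to $0$ as $\varepsilon\to0$ if $K_{X_0}$ is not big. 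So one cannot bound $\sup\phi_{t,\varepsilon}$ by the maximum principle and then subtract $\log\mathcal V_\varepsilon$. Instead I will exploit the normalization $\int_{X_t}e^{\psi_{t,\varepsilon}}dV_t=1$ directly.

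For (i), write $M:=\sup_{X_t}\psi_{t,\varepsilon}$ and $u:=\psi_{t,\varepsilon}-M$, so that $u\in\operatorname{PSH}(X_t,\theta_t+\varepsilon\Omega_t)$ and $\sup u=0$. After shrinking $\Delta$, I choose $B>0$ with $B\Omega-\theta-\Omega\geq0$ on a neighborhood of $X_0$. Exactly as in Lemma~\ref{lem-uniform-skoda}, $u/B$ is then $B^{-1}\cdot$-normalized $\Omega_t$-psh with supremum $0$. The needed input is a uniform $L^1$ bound
\[
\int_{X_t}|v|\,dV_t\leq C_1
\qquad\text{for all } v\in\operatorname{PSH}(X_t,\Omega_t),\ \sup v=0,
\]
with $C_1$ uniform in $t$ near $0$. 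For a single fiber this is the standard compactness of sup-normalized quasi-psh functions. For uniformity in $t$, I would use a smooth trivialization of the family over a smaller disk together with a finite cover by coordinate balls on which $\Omega_t$ has potentials depending smoothly on $t$. The usual sub-mean-value argument then yields constants independent of $t$. I expect this uniformity to be the main (though essentially routine) obstacle; the same argument also gives the $t$-uniform Skoda constant used in (ii).

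Granting the uniform $L^1$ bound, Jensen's inequality gives
\[
1=e^{M}\int_{X_t}e^{u}\,dV_t
\geq e^{M}\operatorname{Vol}(X_t)\exp\Bigl(-\tfrac{BC_1}{\operatorname{Vol}(X_t)}\Bigr),
\]
where $\operatorname{Vol}(X_t):=\int_{X_t}dV_t$ is bounded above and below uniformly in $t$. Hence $M\leq C$. Conversely, $1=\int e^{\psi_{t,\varepsilon}}dV_t\leq e^{M}\operatorname{Vol}(X_t)$ gives $M\geq-\log\operatorname{Vol}(X_t)\geq -C$. Then
\[
\int_{X_t}|\psi_{t,\varepsilon}|\,dV_t\leq\int_{X_t}|u|\,dV_t+|M|\operatorname{Vol}(X_t)\leq C,
\]
which proves (i).

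For (ii), I work on $X_0$ and apply Proposition~\ref{prop-uniform-czero} and Corollary~\ref{cor-section-adapted} with $(X,\omega,\theta,dV)=(X_0,\Omega_0,\theta_0,dV_0)$ and $L=K_{X_0}$ with metric $h_X|_{X_0}$. I set $f_\varepsilon:=e^{\psi_{0,\varepsilon}}$ and $\varphi_\varepsilon:=\psi_{0,\varepsilon}-\sup_{X_0}\psi_{0,\varepsilon}$. Then $\int f_\varepsilon dV_0=1$, and by (i) $f_\varepsilon\leq e^{C}$, so $\|f_\varepsilon\|_{L^2(dV_0)}\leq A$ uniformly. Moreover $\varphi_\varepsilon$ is smooth, hence has full mass, satisfies $\sup\varphi_\varepsilon=0$, and solves $(\theta_0+\varepsilon\Omega_0+dd^c\varphi_\varepsilon)^n=\mathcal V_\varepsilon f_\varepsilon dV_0$. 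Corollary~\ref{cor-section-adapted} with $p=2$ then yields $\varphi_\varepsilon\geq u_s-C'$, with $C'$ depending only on $(X_0,\Omega_0,\theta_0,dV_0)$ and $A$. Consequently
\[
\psi_{0,\varepsilon}\geq u_s-C'+\sup_{X_0}\psi_{0,\varepsilon}\geq u_s-C'-\log\operatorname{Vol}(X_0),
\]
and enlarging $C$ gives the claim. The constant is independent of $\varepsilon$, $m$ and $s$, since the constant in Corollary~\ref{cor-section-adapted} is.
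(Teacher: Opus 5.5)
Your proposal is correct and follows essentially the same route as the paper. You sup-normalize $\psi_{t,\varepsilon}$ and combine a $t$-uniform $L^1$ bound for sup-normalized $\Omega_t$-psh functions with Jensen's inequality and the normalization $\int_{X_t}e^{\psi_{t,\varepsilon}}dV_t=1$ to bound the supremum from both sides; then you apply Proposition~\ref{prop-uniform-czero} with $p=2$ and Corollary~\ref{cor-section-adapted} on $X_0$ to the sup-normalized potential. The only difference is that the paper cites this $t$-uniform sup--integral comparison from \cite[Proposition 3.3 and Section 3.2]{DNGG23}, whereas you sketch a direct proof via relative holomorphic charts. Your remark about needing a $t$-uniform Skoda constant is unnecessary, since (ii) only involves the single fiber $X_0$.
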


\begin{proof}
After shrinking $\Delta$ around $0$, choose a constant
$B>0$ such that
$\theta_t+\varepsilon\Omega_t\leq B\Omega_t$
for each $t\in\Delta$ and $0<\varepsilon\leq1$.
On this smaller disk, the measures $dV_t$ and
$\Omega_t^n$ are uniformly comparable.
Moreover, for $b_t:=\int_{X_t}dV_t$, there are
constants $b_-,b_+>0$ such that
$b_-\leq b_t\leq b_+$ for each $t\in\Delta$.

Put $a_{t,\varepsilon}:=\sup_{X_t}\psi_{t,\varepsilon}$
and
$\widehat\psi_{t,\varepsilon}
:=\psi_{t,\varepsilon}-a_{t,\varepsilon}$.
Since
\[
\Omega_t
+dd^c\left(\frac{\widehat\psi_{t,\varepsilon}}B\right)
=
\frac1B
\left(
B\Omega_t-\theta_t-\varepsilon\Omega_t
+\omega_{t,\varepsilon}
\right)
\geq0,
\]
the function $\widehat\psi_{t,\varepsilon}/B$ is
$\Omega_t$-plurisubharmonic and has supremum zero.
The uniform sup--integral comparison for smooth
K\"ahler families
(\cite[Proposition 3.3 and Section 3.2]{DNGG23}),
together with the uniform comparability of $dV_t$ and
$\Omega_t^n$, gives
\[
\int_{X_t}|\widehat\psi_{t,\varepsilon}|\,dV_t
\leq C_1,
\]
where $C_1>0$ is independent of $t$ and $\varepsilon$.

Set
$I_{t,\varepsilon}
:=\int_{X_t}e^{\widehat\psi_{t,\varepsilon}}\,dV_t$.
Applying Jensen's inequality to the probability measure
$dV_t/b_t$, we obtain
\[
\begin{aligned}
I_{t,\varepsilon}
&\geq
b_t\exp\left(
b_t^{-1}
\int_{X_t}\widehat\psi_{t,\varepsilon}\,dV_t
\right)\\
&\geq b_-e^{-C_1/b_-}
=:c>0.
\end{aligned}
\]
Also, $I_{t,\varepsilon}\leq b_t\leq b_+$, since
$\widehat\psi_{t,\varepsilon}\leq0$.
The normalization
$\int_{X_t}e^{\psi_{t,\varepsilon}}\,dV_t=1$
therefore gives
\[
a_{t,\varepsilon}=-\log I_{t,\varepsilon},
\qquad
-\log b_+\leq a_{t,\varepsilon}\leq-\log c.
\]
Thus $|a_{t,\varepsilon}|\leq C_2$ for a uniform
constant $C_2>0$, and
\[
\int_{X_t}|\psi_{t,\varepsilon}|\,dV_t
\leq
\int_{X_t}|\widehat\psi_{t,\varepsilon}|\,dV_t
+b_t|a_{t,\varepsilon}|
\leq C_1+b_+C_2.
\]
This proves the first assertion with a uniform constant.

For the second assertion,
\[
e^{\psi_{t,\varepsilon}}
=
\frac{e^{\widehat\psi_{t,\varepsilon}}}
{I_{t,\varepsilon}}
\leq c^{-1}.
\]
On $X_0$, put $f_\varepsilon:=e^{\psi_{0,\varepsilon}}$.
Then $\int_{X_0}f_\varepsilon\,dV_0=1$, and
\[
\|f_\varepsilon\|_{L^2(dV_0)}^2
\leq
\|f_\varepsilon\|_{L^\infty(X_0)}
\int_{X_0}f_\varepsilon\,dV_0
\leq c^{-1}.
\]
The function $\widehat\psi_{0,\varepsilon}$ is smooth,
belongs to
$\operatorname{PSH}(X_0,\theta_0+\varepsilon\Omega_0)$,
has supremum zero, and satisfies
\[
\left(
\theta_0+\varepsilon\Omega_0
+dd^c\widehat\psi_{0,\varepsilon}
\right)^n
=
\omega_{0,\varepsilon}^n
=
\mathcal V_\varepsilon f_\varepsilon\,dV_0.
\]
Its total Monge--Amp\`ere mass is
$\mathcal V_\varepsilon$, the volume of the K\"ahler
class $\{\theta_0\}+\varepsilon\{\Omega_0\}$.
Thus it has full Monge--Amp\`ere mass, and
Proposition~\ref{prop-uniform-czero} applies with $p=2$.

For each integer $m\geq1$ and each nonzero section
$s\in H^0(X_0,mK_{X_0})$, the above normalization gives
$\sup_{X_0}u_s=0$.
Applying Corollary~\ref{cor-section-adapted} with
$L=K_{X_0}$ and $h=h_X|_{X_0}$, we obtain
\[
\widehat\psi_{0,\varepsilon}\geq u_s-C_3,
\]
where $C_3>0$ is independent of $\varepsilon$, $m$,
and $s$.
Since $a_{0,\varepsilon}\geq-C_2$, it follows that
\[
\psi_{0,\varepsilon}
=
\widehat\psi_{0,\varepsilon}+a_{0,\varepsilon}
\geq u_s-C_3-C_2.
\]
Taking $C>0$ sufficiently large so that it dominates
the constants appearing above proves both assertions.
\end{proof}

We next use P\u{a}un's twisted form of Schumacher's curvature
formula (\cite[Section 3.2, equation (35)]{Pau17}), whereas  
the untwisted argument is due to Schumacher (\cite{Sch12}).
Let $h_\varepsilon$ be the smooth Hermitian metric on
$K_{X/\Delta}$ induced by the normalized relative
volume form $\omega_{t,\varepsilon}^n/\mathcal V_\varepsilon$,
and put
\[
\rho_\varepsilon
:=\sqrt{-1}\,\Theta_{h_\varepsilon}(K_{X/\Delta}).
\]
Set
\[
\tau_\varepsilon:=\rho_\varepsilon+\varepsilon\Omega.
\]
By~\eqref{eq-twisted-ke}, we have
$\tau_\varepsilon|_{X_t}=\omega_{t,\varepsilon}$.

\begin{proposition}\label{prop-schumacher}
The form $\tau_\varepsilon$ is semipositive on $X$.
% Equivalently,
% \begin{equation*}
% \rho_\varepsilon\geq-\varepsilon\Omega.
% \end{equation*}
\end{proposition}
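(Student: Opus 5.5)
Since $\tau_\varepsilon|_{X_t}=\omega_{t,\varepsilon}>0$ on each fiber, $\tau_\varepsilon$ is positive along the vertical directions. The only question is therefore the horizontal direction. I will follow Schumacher's method in P\u{a}un's twisted form and reduce positivity to a scalar inequality for the \emph{geodesic curvature}. Write $\tau_\varepsilon=\sqrt{-1}\,\partial\bar\partial\Phi_\varepsilon-\text{(twist)}$ locally, with $\Phi_\varepsilon:=\Phi_X+\phi_{t,\varepsilon}-\log\mathcal V_\varepsilon$ (up to the $\varepsilon\Omega$ term). For each point choose the horizontal lift
\[
v_\varepsilon:=\partial_t+a^{\alpha}\partial_{z_\alpha},
\]
determined by requiring $\tau_\varepsilon(v_\varepsilon,\overline{\partial_{z_\beta}})=0$ for all $\beta$; this uses the fiberwise positivity. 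Define
\[
c(\tau_\varepsilon):=\langle v_\varepsilon,v_\varepsilon\rangle_{\tau_\varepsilon}.
\]
Then $\tau_\varepsilon^{n+1}=c(\tau_\varepsilon)\,\tau_\varepsilon^n\wedge\sqrt{-1}\,dt\wedge d\bar t$, so $\tau_\varepsilon\ge0$ on $X$ exactly when $c(\tau_\varepsilon)\ge0$ everywhere.

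The key step is to derive an elliptic equation for $c(\tau_\varepsilon)$ on each fiber. Differentiating the twisted Kähler--Einstein equation \eqref{eq-twisted-ke} in the $t$-direction, exactly as in \cite[Section 3.2, (35)]{Pau17}, should give
\[
-\Delta_{\omega_{t,\varepsilon}}c(\tau_\varepsilon)+c(\tau_\varepsilon)
=|\bar\partial v_\varepsilon|^2_{\omega_{t,\varepsilon}}+\varepsilon\,c_{\omega_{t,\varepsilon}}(\Omega),
\]
where $\bar\partial v_\varepsilon$ is the Kodaira--Spencer representative $\bar\partial(a^\alpha\partial_\alpha)|_{X_t}$. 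The twist term is $\varepsilon\,\Omega(v_\varepsilon,\overline{v_\varepsilon})$, which is nonnegative because $\Omega$ is a Kähler form on $X$ after the shrinking and modification done at the start of this section. The computation is Schumacher's. In it I would use that $\tau_\varepsilon$ is $d$-closed and that $\operatorname{Ric}(\omega_{t,\varepsilon})$ is computed from $\tau_\varepsilon^n$ restricted to fibers, i.e. from $h_\varepsilon$, so that $\rho_\varepsilon=\tau_\varepsilon-\varepsilon\Omega$ appears as the relative Ricci form. I would commute $\bar\partial^*$ and $\partial$ for the lift and use the Kähler identities on the fiber.

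Granting the equation, the conclusion follows from the minimum principle on the compact fiber $X_t$. At a minimum point of $c(\tau_\varepsilon)$ we have $\Delta c\ge0$, so $c\ge|\bar\partial v|^2+\varepsilon\Omega(v,\bar v)\ge0$ there. Hence $c(\tau_\varepsilon)\ge0$ everywhere, and therefore $\tau_\varepsilon\ge0$.

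The main obstacle is the curvature computation in the twisted setting. I need to verify the signs and that the extra terms produced by the twist $\varepsilon\Omega$ collect into $\varepsilon\Omega(v,\bar v)$ rather than a non-signed term involving $\varepsilon\Omega$ mixed with $a^\alpha$. I would handle this by evaluating $\Omega$ only on the $\tau_\varepsilon$-horizontal lift; mixed terms then cancel via the defining orthogonality relation of $v_\varepsilon$ combined with $d\Omega=0$.
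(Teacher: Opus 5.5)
Your proposal is correct and follows the paper's proof: the $\tau_\varepsilon$-horizontal lift $v_\varepsilon$, the geodesic curvature $c_\varepsilon=\tau_\varepsilon(v_\varepsilon,\overline{v_\varepsilon})$, P\u{a}un's twisted Schumacher identity with twist $\varepsilon\Omega$, the minimum principle on the compact fiber, and the Schur-complement equivalence $\tau_\varepsilon\geq0\Leftrightarrow c_\varepsilon\geq0$. The only slips are cosmetic: locally $\tau_\varepsilon=dd^c\Phi_\varepsilon+\varepsilon\Omega$ (plus, not minus), and $\tau_\varepsilon^{n+1}=(n+1)\,c_\varepsilon\,\tau_\varepsilon^n\wedge\sqrt{-1}\,dt\wedge d\bar t$; moreover, the twist enters only through $\rho_\varepsilon(v_\varepsilon,\overline{v_\varepsilon})=c_\varepsilon-\varepsilon\Omega(v_\varepsilon,\overline{v_\varepsilon})$, so no separate cancellation of mixed terms is needed.
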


\begin{proof}
In relative holomorphic coordinates $(z_1,\ldots,z_n,t)$, write
\[
\omega_{t,\varepsilon}
=\sqrt{-1}\,\sum_{\alpha,\beta=1}^n
g_{\alpha\bar\beta}\,dz_\alpha\wedge d\bar z_\beta.
\]
Since $\mathcal V_\varepsilon$ is independent of $t$, the
normalization does not change curvature. The definition of
$h_\varepsilon$ therefore gives
\[
dd^c\log\det(g_{\alpha\bar\beta})
=\rho_\varepsilon
=\tau_\varepsilon-\varepsilon\Omega,
\]
where $dd^c$ acts on the total-space variables.

Let $v_\varepsilon$ be the $\tau_\varepsilon$-horizontal lift
of $\partial/\partial t$. This lift is well defined because
the restriction of $\tau_\varepsilon$ to the vertical tangent
bundle is positive definite.
For a real $(1,1)$-form
$\gamma=\sqrt{-1}\,\sum_{a,b}\gamma_{a\bar b}\,dw_a\wedge d\bar w_b$,
we use the Hermitian convention
$\gamma(v,\bar v):=\sum_{a,b}\gamma_{a\bar b}
v^a\overline{v^b}$.
Define the geodesic curvature by
$c_\varepsilon:=\tau_\varepsilon
(v_\varepsilon,\overline{v_\varepsilon})$.

Let $(g^{\bar\beta\alpha})$ denote the inverse matrix, and use
the convention
\[
\Delta_{\omega_{t,\varepsilon}}
:=\sum_{\alpha,\beta=1}^n
g^{\bar\beta\alpha}
\frac{\partial^2}{\partial z_\alpha\partial\bar z_\beta}.
\]
Applying~\cite[Section 3.2, equation (35)]{Pau17} with
$\beta=\varepsilon\Omega$ gives
\begin{equation}\label{eq-schumacher-formula}
(-\Delta_{\omega_{t,\varepsilon}}+1)c_\varepsilon
=
\left|
\bar\partial_{X_t}(v_\varepsilon|_{X_t})
\right|_{\omega_{t,\varepsilon}}^2
+\varepsilon\Omega(v_\varepsilon,\overline{v_\varepsilon}).
\end{equation}
This identity is local on the base, so it applies to the
present proper family over a disk.
Here $\bar\partial_{X_t}$ denotes differentiation along the
fiber, and
$\bar\partial_{X_t}(v_\varepsilon|_{X_t})$ is a
$T^{1,0}X_t$-valued $(0,1)$-form on $X_t$.

The right-hand side of~\eqref{eq-schumacher-formula} is
nonnegative because $\Omega$ is a K\"ahler form on the total
space. The maximum principle on the compact fiber $X_t$
therefore gives $c_\varepsilon\geq0$.
Since the vertical block of $\tau_\varepsilon$ is the positive
form $\omega_{t,\varepsilon}$, the Schur complement criterion gives
\[
\tau_\varepsilon\geq0
\quad\Longleftrightarrow\quad
c_\varepsilon\geq 0.
\]
\end{proof}

\subsection{A singular limit metric with semipositive curvature}

Fix a smaller disk $\Delta'\Subset\Delta$ centered at $0$.
In a local holomorphic frame $\eta$ of $K_{X/\Delta}$,
write $|\eta|_{h_\varepsilon}^2=e^{-\Phi_\varepsilon}$.
Then
\[
\Phi_\varepsilon=\Phi_X+\psi_{t,\varepsilon},
\qquad
dd^c\Phi_\varepsilon=\rho_\varepsilon.
\]
In particular, $\Phi_\varepsilon-\Phi_X$ is a globally defined
smooth function on the total space.

We now construct a singular Hermitian metric with semipositive
curvature.

\begin{proposition}\label{prop-total-compactness}
There exist a sequence $\varepsilon_j\to0$ and a singular
Hermitian metric $h=e^{-\Phi}$ on
$K_{X/\Delta}|_{f^{-1}(\Delta')}$ such that
\begin{enumerate}[(i)]
\item
The local weights satisfy
$\Phi_{\varepsilon_j}\to\Phi$ in $L^1_{\rm loc}$.

\item
$\sqrt{-1}\,\Theta_h(K_{X/\Delta})\ge0$.

\item
For each integer $m\ge1$ and each nonzero section
$s\in H^0(X_0,mK_{X_0})$, there is a constant $C_s$ such that,
writing $s=g\eta^{\otimes m}$ in a local frame $\eta$ and
denoting by $\Phi$ the weight of $h$ in that frame, one has
\begin{equation}\label{eq-limit-barrier}
\Phi|_{X_0}\ge\frac1m\log|g|^2-C_s.
\end{equation}
\end{enumerate}
The same metric $h$ satisfies these inequalities simultaneously
for all $m$ and $s$.
\end{proposition}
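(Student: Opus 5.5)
The plan is to extract the limit by compactness of quasi-plurisubharmonic functions on the total space, and then pass the barrier of Lemma~\ref{lem-density-barrier}(ii) to the limit. Work on $f^{-1}(\Delta')$, which is relatively compact in $X$. By Proposition~\ref{prop-schumacher}, $dd^c\Phi_\varepsilon=\rho_\varepsilon\ge-\varepsilon\Omega\ge-\Omega$ for $0<\varepsilon\le1$. Hence $\Phi_\varepsilon+\Omega$-potentials are plurisubharmonic: locally $\Omega=dd^c\kappa$, and $\Phi_\varepsilon+\kappa$ is psh.

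\textbf{Step 1: uniform bounds.} By Lemma~\ref{lem-density-barrier}(i), $\sup_{X_t}\psi_{t,\varepsilon}\le C$, so $\Phi_\varepsilon-\Phi_X$ is uniformly bounded above on $f^{-1}(\Delta)$. Also $\int_{X_t}|\psi_{t,\varepsilon}|\,dV_t\le C$ for every $t$. Integrating over $t\in\Delta'$ by Fubini gives a uniform $L^1$ bound for $\psi_{t,\varepsilon}$ on $f^{-1}(\Delta')$. By the standard compactness theorem for families of psh functions that are locally uniformly bounded above and do not converge uniformly to $-\infty$, there is a sequence $\varepsilon_j\to0$ with $\Phi_{\varepsilon_j}\to\Phi$ in $L^1_{\rm loc}$. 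Here $\Phi$ is chosen upper semicontinuous, $\Phi-\Phi_X$ is a global function, and $\Phi+\kappa$ is psh. The $L^1$ bound rules out divergence to $-\infty$. This gives (i).

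\textbf{Step 2: curvature.} Currents converge weakly under $L^1_{\rm loc}$ convergence of the potentials. Therefore $dd^c\Phi=\lim \rho_{\varepsilon_j}\ge\lim(-\varepsilon_j\Omega)=0$, which is (ii).

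\textbf{Step 3: the barrier on $X_0$ (main obstacle).} Restriction to $X_0$ does not commute with $L^1_{\rm loc}$ limits. However, for psh functions one has $\Phi|_{X_0}\ge\limsup_j\Phi_{\varepsilon_j}|_{X_0}$ pointwise; upper semicontinuity alone gives the opposite direction, so this needs an argument. I would use Hartogs-type mean value arguments, as in Step~4 of Proposition~\ref{prop-uniform-czero}.

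Fix $x\in X_0$ and a small polydisc $U$ around $x$ in relative coordinates, and put $u_j:=\Phi_{\varepsilon_j}+\kappa$, which is psh. From Lemma~\ref{lem-density-barrier}(ii), $u_j\ge \frac1m\log|g|^2+\kappa-C'$ on $X_0\cap U$, uniformly in $j$. This follows since $u_s=\frac1m\log|g|^2-\Phi_X-\text{const}$ and $\Phi_{\varepsilon}=\Phi_X+\psi_{0,\varepsilon}$.

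The hard part is transferring this lower bound from the slice to $\Phi$. I would argue as follows. For $z\in X_0\cap U$, the sub-mean-value inequality in the transverse $t$-direction and in the fiber directions gives $u_j(z)\le$ the average of $u_j$ over a small polydisc $P_r(z)$ of radius $r$. Letting $j\to\infty$ and using $L^1$ convergence, we get $\liminf$ of the averages equal to the average of $\Phi+\kappa$ over $P_r(z)$. Hence the average of $\Phi+\kappa$ over $P_r(z)$ is at least $\frac1m\log|g(z)|^2+\kappa(z)-C'$ up to the averaging. Letting $r\to0$, the averages of the psh function $\Phi+\kappa$ over shrinking polydiscs decrease to $(\Phi+\kappa)(z)$. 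This yields $\Phi(z)\ge\frac1m\log|g(z)|^2-C_s$ at every $z\in X_0$, with $C_s$ depending only on $C$ and the normalization constant $\sup_{X_0}\frac1m\log|s|^2$.

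Care is needed only in ordering the limits: fix $r$, let $j\to\infty$, then let $r\to0$. The sequence $\varepsilon_j$ was chosen once in Step~1, independently of $m$ and $s$. The constant $C$ of Lemma~\ref{lem-density-barrier} is independent of $\varepsilon,m,s$, so the same $h$ works for all sections simultaneously.
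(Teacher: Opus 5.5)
Your proposal is correct and follows the paper's proof. Uniform upper and $L^1$ bounds from Lemma~\ref{lem-density-barrier}(i) give $L^1_{\rm loc}$ compactness of the quasi-psh weights, weak convergence of currents gives $dd^c\Phi\ge0$, and the pointwise inequality $\Phi\ge\limsup_j\Phi_{\varepsilon_j}$ on $X_0$ carries the barrier of Lemma~\ref{lem-density-barrier}(ii) to the limit along a subsequence chosen independently of $m$ and $s$. The only difference is that you prove this pointwise inequality directly by the sub-mean-value argument (fix $r$, let $j\to\infty$, then let $r\to0$), whereas the paper deduces it from the identity $\Phi=(\limsup_j(\Phi_{\varepsilon_j}+\varepsilon_j r))^*$.
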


\begin{proof}
Lemma~\ref{lem-density-barrier}.\textup{(\ref{item-density-uniform-bounds})}, together with
Fubini's theorem and the smoothness of $\Phi_0$, gives uniform
local upper and $L^1$ bounds for the weights $\Phi_\varepsilon$.
By Proposition~\ref{prop-schumacher},
\[
dd^c\Phi_\varepsilon\ge-\varepsilon\Omega.
\]
On a sufficiently small coordinate chart, choose a smooth
function $r$ with $dd^cr=\Omega$. Then
$\Phi_\varepsilon+\varepsilon r$ is plurisubharmonic.

The local compactness theorem for plurisubharmonic functions, followed by a diagonal
extraction on a countable coordinate cover, gives a sequence
$\varepsilon_j\to0$ for which the local weights converge in
$L^1_{\rm loc}$. The uniform local $L^1$ bounds exclude
convergence to $-\infty$.
Let $\Phi$ denote the plurisubharmonic representative of
each local limit.
The fixed transition identities pass to the limits, so these
weights define a singular Hermitian metric $h$.
Moreover, passing to the limit in the sense of currents gives
\[
\sqrt{-1}\,\Theta_h(K_{X/\Delta})
=dd^c\Phi\ge0.
\]

Locally, the chosen representative satisfies
\[
\Phi=
\left(
\limsup_{j\to\infty}
(\Phi_{\varepsilon_j}+\varepsilon_jr)
\right)^*,
\]
where the upper-semicontinuous regularization is taken on
the total-space coordinate chart. Consequently,
\[
\Phi(x)\ge
\limsup_{j\to\infty}\Phi_{\varepsilon_j}(x)
\]
at every point $x$ of the chart.

Fix an integer $m\ge1$ and a nonzero section
$s\in H^0(X_0,mK_{X_0})$, and put
\[
A_s:=
\sup_{X_0}
\left(
\frac1m\log|s|_{h_X^{\otimes m}}^2
\right).
\]
This number is finite. In a local frame $\eta$, write
$s=g\eta^{\otimes m}$. Lemma~\ref{lem-density-barrier}.\textup{(\ref{item-density-section-barrier})}
gives
\[
\Phi_\varepsilon|_{X_0}
=\Phi_X|_{X_0}+\psi_{0,\varepsilon}
\ge
\frac1m\log|g|^2-A_s-C.
\]
Thus, for every $x\in X_0$ in this chart,
\[
\Phi(x)\ge
\limsup_{j\to\infty}\Phi_{\varepsilon_j}(x)
\ge
\frac1m\log|g(x)|^2-A_s-C.
\]
Taking $C_s:=C+\max\{A_s,0\}$ proves~\eqref{eq-limit-barrier}.
Since the subsequence was chosen independently of $m$ and $s$,
all these inequalities hold for the same metric $h$.
\end{proof}

\subsection{Extension of pluricanonical sections}

We first establish the integrability of an arbitrary pluricanonical
section and then apply an extension theorem.

\begin{lemma}\label{lem-fiber-integrability}
Let $s\in H^0(X_0,mK_{X_0})$, and let $h$ be the metric in
Proposition~\ref{prop-total-compactness}. Then
\begin{equation}\label{eq-fiber-integrability}
\int_{X_0}|s|^2_{h^{m-1}}<\infty.
\end{equation}
\end{lemma}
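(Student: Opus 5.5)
The plan is to reduce the integrand to a bounded function, using the barrier~\eqref{eq-limit-barrier} from Proposition~\ref{prop-total-compactness}.(iii). The case $s=0$ is trivial, so we take $s\neq0$.

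First, we interpret the integrand. On $X_0$ we identify $K_{X/\Delta}|_{X_0}$ with $K_{X_0}$ via adjunction. In a local frame $\eta$ with $s=g\eta^{\otimes m}$, we have
\[
|s|^2_{h^{m-1}}=|g|^2e^{-(m-1)\Phi|_{X_0}}\,\sqrt{-1}^{n^2}\eta_0\wedge\overline{\eta_0},
\]
which is a well-defined measure on $X_0$. We must make sure $\Phi|_{X_0}$ is meaningful, i.e.\ not identically $-\infty$ on $X_0$. This follows from the barrier itself: since $g\not\equiv0$, the right-hand side of~\eqref{eq-limit-barrier} is $>-\infty$ on a dense open subset of $X_0$. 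Because $\Phi$ is psh on the total space, its restriction to $X_0$ is psh or $\equiv-\infty$ on each chart, and the barrier rules out the latter.

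Next, we insert the barrier. From $\Phi|_{X_0}\ge\frac1m\log|g|^2-C_s$ we get
\[
e^{-(m-1)\Phi|_{X_0}}\le e^{(m-1)C_s}|g|^{-2(m-1)/m}.
\]
Hence, locally,
\[
|g|^2e^{-(m-1)\Phi}\le e^{(m-1)C_s}|g|^{2/m}.
\]
This is bounded on each relatively compact coordinate chart, since $g$ is holomorphic. At points where $g=0$, the barrier gives no information, but there the bound $|g|^{2/m}=0$ still holds: the inequality on $\Phi$ is only needed where $g\neq0$, and $\{g=0\}$ has measure zero anyway.

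Finally, we conclude by compactness. Cover $X_0$ by finitely many such charts, each with a smooth volume comparison between $\sqrt{-1}^{n^2}\eta_0\wedge\overline{\eta_0}$ and $dV_0$. The integrand is then bounded by a constant multiple of a smooth volume form on each chart, so the integral over the compact fiber $X_0$ is finite. For $m=1$ the statement is just $\int_{X_0}\sqrt{-1}^{n^2}s\wedge\bar s<\infty$, which is immediate.

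The only delicate point is the first step: justifying that the restriction $\Phi|_{X_0}$ of the limit weight is the object appearing in both the barrier and the integral. Proposition~\ref{prop-total-compactness} states the barrier pointwise for the chosen psh representative on $X_0$, so taking the integrand with that representative makes the argument consistent.
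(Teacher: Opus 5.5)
Your proposal is correct and follows the paper's own proof: both use the barrier \eqref{eq-limit-barrier} to get $|g|^2e^{-(m-1)\Phi}\le C|g|^{2/m}$ locally on $X_0$, then conclude from the local integrability (indeed boundedness) of the right-hand side on the compact fiber. Your added check that the barrier forces $\Phi|_{X_0}\not\equiv-\infty$ is the same observation the paper makes in the proof of Theorem~\ref{prop-section-extension}.
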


\begin{proof}
Assume that $s\neq0$. We write locally $s=g\eta^m$, where $\eta$ is a
frame of $K_{X/\Delta}$. From~\eqref{eq-limit-barrier},
\[
|g|^2e^{-(m-1)\Phi}
\le C|g|^{2/m}.
\]
The right-hand side is locally integrable, with no restriction on the multiplicities of
$\operatorname{div}(s)$. This proves~\eqref{eq-fiber-integrability}.
\end{proof}

We now apply the extension theorem to arbitrary pluricanonical sections.
The following result is the qualitative form of Cao's extension theorem
(\cite[Theorem 1.1]{Cao17}).

\begin{theorem}[{\cite[Theorem 1.1]{Cao17}}]\label{thm-ot}
Let $q:Y\to\Delta$ be a proper surjective holomorphic map from a
K\"ahler manifold, where $\Delta\subset\mathbb C$ is a disk centered
at $0$ with coordinate $t$.
Assume that $q$ is a submersion along $Y_0:=q^{-1}(0)$.
Let $L$ be a holomorphic line bundle on $Y$ endowed with a singular
Hermitian metric $h_L$ such that
$\sqrt{-1}\,\Theta_{h_L}(L)\geq0$ in the sense of currents.
Assume that $h_L|_{Y_0}$ is well defined.
Then every section
$u\in H^0(Y_0,K_{Y_0}\otimes L|_{Y_0})$
with finite $L^2$ norm with respect to $h_L|_{Y_0}$
admits an extension $U\in H^0(Y,K_Y\otimes L)$ satisfying
\[
U|_{Y_0}=u\wedge q^*(dt).
\]
\end{theorem}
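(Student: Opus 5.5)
Since Theorem~\ref{thm-ot} is quoted from \cite{Cao17}, the plan is to reconstruct the Ohsawa--Takegoshi argument in Cao's form. The one genuinely new difficulty is that $h_L$ is an arbitrary singular metric with semipositive curvature, so it cannot be smoothed without losing positivity.

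\textbf{Step 1: reduce to a complete K\"ahler manifold.} After shrinking $\Delta$, $Y$ is weakly pseudoconvex, with smooth psh exhaustion $-\log(r^2-|q|^2)$. Removing a suitable analytic hypersurface (or using an exhaustion by relatively compact sublevel sets) gives a complete K\"ahler manifold. On it, $L^2$ solutions of $\bar\partial$ with weights are available.

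\textbf{Step 2: construct a smooth local extension.} Because $q$ is a submersion along $Y_0$, $u$ can be extended locally and patched with a partition of unity. This gives a smooth section $\tilde U$ of $K_Y\otimes L$ near $Y_0$ with $\tilde U|_{Y_0}=u\wedge q^*dt$.

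\textbf{Step 3: the main obstacle, regularizing the metric.} By Demailly's regularization on the K\"ahler manifold $Y$, write $h_L$ as a decreasing limit of metrics $h_\delta=e^{-\phi_\delta}$ with analytic singularities and $\sqrt{-1}\,\Theta_{h_\delta}\ge-\delta\Omega$, where $\Omega$ is a K\"ahler form on $Y$. The negative part $-\delta\Omega$ breaks the standard twisted Bochner--Kodaira--Nakano inequality, so it must be absorbed. Following Cao, I would:
\begin{enumerate}[(i)]
\item use the OT twisting functions $\eta=s(-\log|t|^2)$ and $\lambda$, with a cutoff $\chi(|t|^2/\epsilon^2)$ applied to $\tilde U$;
\item control the bad term by the $\delta\Omega$-error multiplied by the $L^2$ norm of the ``$\bar\partial$ of the cutoff'' form, showing this contribution is $O(\delta\epsilon^{-2})\cdot\|\cdot\|$ up to terms that go to zero;
\item combine this with the observation that the error form lives only in the shell $\{\epsilon/2<|t|<\epsilon\}$, where the integrals of $|\tilde U|^2e^{-\phi_\delta}$ converge to the $L^2$ norm of $u$ on $Y_0$ as $\epsilon\to0$.
\end{enumerate}
This requires $h_L|_{Y_0}$ to be well defined and $\int_{Y_0}|u|^2_{h_L}<\infty$. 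Monotonicity of $h_\delta$ then gives a uniform bound on $\int_{Y_0}|u|^2_{h_\delta}$.

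\textbf{Step 4: solve, correct, and pass to the limit.} Solve
\[
\bar\partial v_{\epsilon,\delta}=\bar\partial\bigl(\chi\tilde U\bigr)
\]
with an estimate $\int|v_{\epsilon,\delta}|^2e^{-\phi_\delta}(|t|^2\log^2|t|)^{-1}\le C\int_{Y_0}|u|^2_{h_L}+o(1)$. The singular factor forces $v_{\epsilon,\delta}|_{Y_0}=0$. Then $U_{\epsilon,\delta}:=\chi\tilde U-v_{\epsilon,\delta}$ is holomorphic with uniformly bounded $L^2$ norm. Letting $\delta\to0$ first (choosing $\delta\ll\epsilon^2$) and then $\epsilon\to0$, weak compactness of holomorphic sections with bounded $L^2$ norm (Montel) gives a limit $U$. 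The restriction condition persists because convergence is locally uniform. Finally, $U$ extends across the removed hypersurface by the $L^2$ Riemann extension theorem.

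I expect the main difficulty to be Step 3: balancing $\delta$ against $\epsilon$ so that the curvature loss is negligible while the estimate stays uniform.
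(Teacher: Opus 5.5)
The paper gives no argument for this statement (it is imported from \cite{Cao17}), so your proposal has to stand on its own. As written it does not: the step you flag as the main obstacle is not actually resolved, and two other steps are incorrect.

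\textbf{Absorbing the curvature loss.} For fixed $\delta>0$ you cannot solve $\bar\partial v_{\epsilon,\delta}=\bar\partial(\chi\tilde U)$ exactly. In the twisted Bochner--Kodaira--Nakano inequality the loss enters as $-\delta\int\eta|\alpha|^2e^{-\phi_\delta}$ on the \emph{arbitrary} test form $\alpha$, since on $(n+1,1)$-forms $[\Omega,\Lambda_\Omega]$ is the identity. It does not enter through the data $g=\bar\partial(\chi\tilde U)$, so it cannot be bounded by ``the $\delta\Omega$-error times the norm of the $\bar\partial$ of the cutoff'' as in your item (ii).

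No twist can compensate. The term $-\sqrt{-1}\partial\bar\partial\eta-\lambda^{-1}\sqrt{-1}\partial\eta\wedge\bar\partial\eta$ is a multiple of $\sqrt{-1}\,dt\wedge d\bar t$. Any function whose Levi form dominated $\Omega$ in vertical directions would be strictly psh on the compact fiber $Y_0$, which is impossible.

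The same degeneracy affects the term $\chi\,\bar\partial\tilde U$. Because $\tilde U$ is only smooth, $g$ is not supported in the shell, and its horizontal components make $\langle B^{-1}g,g\rangle=+\infty$.

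The missing idea is the approximate-solution device:
\begin{enumerate}[(i)]
\item Solve $\bar\partial v+\sqrt{\delta'}\,w=g$ with $\delta'\gtrsim\delta\sup\eta$; this requires $\eta$ to be bounded.
\item Obtain $L^2$ bounds on $v$ and $w$ in terms of $\int\langle(B_\delta+\delta')^{-1}g,g\rangle e^{-\phi_\delta}$.
\item Make the horizontal contribution, of size $\delta'^{-1}\int_{|t|<\epsilon}|\bar\partial\tilde U|^2e^{-\phi_\delta}$, tend to zero by coupling $\delta'$ to $\epsilon$.
\end{enumerate}
Then $\chi\tilde U-v$ is not holomorphic, so your Montel step and the restriction condition must be handled through this limit, for example via $W^{1,2}_{\mathrm{loc}}$-convergence and traces.

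\textbf{Shell integrals for singular weights.} Step~3(iii) claims that the shell integrals of $|\tilde U|^2e^{-\phi_\delta}$ converge to $\int_{Y_0}|u|^2_{h_L}$. This needs continuity of the weight near $Y_0$ and is false for singular weights. Monotonicity $h_\delta\le h_L$ only controls the integral on $Y_0$ itself, not on nearby fibers.

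With your order of limits ($\delta\to0$ first) the relevant quantity becomes $\epsilon^{-2}\int_{\epsilon/2<|t|<\epsilon}|\tilde U|^2e^{-\phi}$ for the original weight, and this can be infinite. Take the local weight $\phi=\log|z-t|^2$ and $u=z\,dz$. Then $\int_{Y_0}|u|^2e^{-\phi}<\infty$, but the naive extension $z\,dz\wedge dt$ has $\int_{Y_t}|z|^2|z-t|^{-2}=\infty$ for every $t\neq0$. Regularizing does not help, because $\phi$ already has analytic singularities.

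So $\tilde U$ must be built adapted to $\phi$. One way is to glue local Ohsawa--Takegoshi extensions on Stein product charts $B'\times\Delta_\epsilon$, which give $\int_{|t|<\epsilon}|\tilde U|^2e^{-\phi}\le C\epsilon^2\int_{Y_0}|u|^2e^{-\phi}$, while controlling the gluing error $\bar\partial\tilde U$. Your Step~2 does nothing of this sort.

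\textbf{The vanishing argument.} Step~4 fails as stated, because
\[
\int_{0<|t|<1/2}|t|^{-2}(\log|t|)^{-2}\,d\lambda(t)=2\pi/\log 2<\infty,
\]
so this weight does not force $v|_{Y_0}=0$. At fixed $\epsilon$ you need a weight comparable to $|t|^{-2}$ times a factor bounded below near $Y_0$; the $\log^2$ factor appears only in the limiting estimate.

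Incidentally, you need not shrink $\Delta$, which would weaken the conclusion: $-\log(r^2-|q|^2)$ is already a psh exhaustion of $Y$.
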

For a relatively flat coherent sheaf, it characterizes the local constancy of fiberwise cohomology dimensions in terms of cohomological flatness, base change and  
	extension in consecutive degrees, and torsion freeness of the corresponding
	higher direct image sheaves. 
\begin{theorem}[part of {\cite[Theorem 1.2]{FLR26}}]\label{intro-main-criterion}
	Let $f:X\to Y$ be a proper morphism of complex spaces to a smooth Stein curve and $\mathscr F$ a coherent analytic sheaf on $X$ which is flat with respect to $f$. Fix $y\in Y$ and $q\geq0$. Then the following conditions are equivalent:
	\begin{itemize}
			\item[(i)] $\mathscr F$ is cohomologically flat in dimension $q$ over
			$Y$.
			\item[(ii)] The base change maps in degrees $q$ and $q-1$,
			\[
			\varphi^j(y):(R^jf_*\mathscr F)_y
			\otimes_{\mathcal O_{Y,y}}\mathbb C(y)
			\longrightarrow H^j(X_y,\mathscr F_{(y)}),
			\qquad j=q,q-1,
			\]
			are isomorphisms, where $\mathscr F_{(y)}$ denotes the analytic restriction of $\mathscr F$ to the analytic fiber $X_y$.
			\item[(iii)] The function
			\[
			y\longmapsto\dim H^q(X_y,\mathscr F_{(y)})
			\]
			is locally constant.
			\item[(iv)] The restriction maps
			\[
			r^j:H^j(X,\mathscr F)\longrightarrow
			H^j(X_y,\mathscr F_{(y)}),
			\qquad j=q,q-1,
			\]
			are surjective.
			\item[(v)] The sheaves $R^qf_*\mathscr F$ and
			$R^{q+1}f_*\mathscr F$ are torsion free.
	\end{itemize}
\end{theorem}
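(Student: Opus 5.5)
The plan is to reduce everything to linear algebra over the discrete valuation ring $\mathcal O_{Y,y}$ via Grauert's direct image theorem. Since $Y$ is a smooth curve, after shrinking to a coordinate disk around $y$ we may, by Grauert (in the form of the Grothendieck complex), choose a bounded complex $K^\bullet$ of finite free $\mathcal O_Y$-modules with the following two properties. First, $R^jf_*\mathscr F\cong\mathcal H^j(K^\bullet)$. Second, $H^j(X_{y'},\mathscr F_{(y')})\cong H^j(K^\bullet\otimes\mathbb C(y'))$ for every $y'$ in the disk. Flatness of $\mathscr F$ over $Y$ is exactly what makes this base change compatibility hold. Because $\mathcal O_{Y,y'}$ is a PID, the universal coefficient theorem gives a split exact sequence
\[
0\to \mathcal H^j(K^\bullet)_{y'}\otimes\mathbb C(y')\xrightarrow{\ \varphi^j(y')\ } H^j(X_{y'},\mathscr F_{(y')})\to \operatorname{Tor}_1\bigl(\mathcal H^{j+1}(K^\bullet)_{y'},\mathbb C(y')\bigr)\to0 .
\]
In particular every base change map $\varphi^j(y')$ is injective. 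Moreover $\varphi^j(y')$ is surjective if and only if $R^{j+1}f_*\mathscr F$ has no torsion at $y'$.

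With this in hand, (ii)$\Leftrightarrow$(v) is immediate. The map $\varphi^q(y)$ is an isomorphism iff $R^{q+1}f_*\mathscr F$ is torsion free at $y$, and $\varphi^{q-1}(y)$ is an isomorphism iff $R^qf_*\mathscr F$ is torsion free at $y$. Torsion of a coherent sheaf on a curve is supported at isolated points, so near $y$ torsion freeness at $y$ is the same as torsion freeness on a neighborhood.

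For (iii), split off the torsion. Writing $\mathcal H^j(K^\bullet)_{y'}\cong\mathcal O^{r_j}\oplus T_j$ with $T_j$ torsion, the sequence above gives
\[
\dim H^q(X_{y'},\mathscr F_{(y')})=r_q+\ell_q(y')+\ell_{q+1}(y'),
\]
where $\ell_j(y')$ is the number of cyclic summands of $T_j$ at $y'$. This number vanishes for $y'\neq y$ near $y$. Hence local constancy of the dimension is equivalent to $\ell_q(y)=\ell_{q+1}(y)=0$, that is, to (v).

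Cohomological flatness in dimension $q$ (condition (i)) means that $R^qf_*\mathscr F$ is locally free and commutes with base change. By the same sequence, this is equivalent to local freeness of $R^qf_*\mathscr F$ and $R^{q+1}f_*\mathscr F$ near $y$. Over a smooth curve, local freeness is the same as torsion freeness, so (i)$\Leftrightarrow$(v).

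For (iv), use that $Y$ is Stein. The Leray spectral sequence degenerates by Cartan's Theorem B, giving $H^j(X,\mathscr F)=H^0(Y,R^jf_*\mathscr F)$. The restriction map $r^j$ then factors as
\[
H^0(Y,R^jf_*\mathscr F)\to (R^jf_*\mathscr F)_y\otimes\mathbb C(y)\xrightarrow{\varphi^j(y)} H^j(X_y,\mathscr F_{(y)}).
\]
The first arrow is surjective by Theorem A. Therefore $r^j$ is surjective iff $\varphi^j(y)$ is, and by the injectivity established above this holds iff $\varphi^j(y)$ is an isomorphism. This yields (iv)$\Leftrightarrow$(ii).

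The main obstacle I expect is the first step. One must set up the Grauert complex carefully in the analytic category, so that both the identification $R^jf_*\mathscr F\cong\mathcal H^j(K^\bullet)$ and the fiberwise identification hold, using the analytic restriction $\mathscr F_{(y)}$ and flatness. One must also keep track of the fact that the conditions localize at $y$, possibly after shrinking $Y$. Everything afterward is PID module theory.
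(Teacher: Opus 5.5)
The paper contains no proof of this statement. It is quoted from \cite[Theorem 1.2]{FLR26} and used as a black box: with $q=0$ in Theorem~\ref{prop-section-extension}, and in the vanishing-to-extension steps of Observation~\ref{thm-smooth-nefbig} and Theorem~\ref{thm-nef-big}. So there is no internal argument to compare against, and your proposal has to stand on its own. It does. The route is the standard one: take Grauert's base-change complex on a small disk around $y$, then apply the split universal coefficient sequence over the discrete valuation ring $\mathcal O_{Y,y'}$. This gives two facts. First, every $\varphi^j(y')$ is injective. Second, $\varphi^j(y')$ is surjective if and only if $(R^{j+1}f_*\mathscr F)_{y'}$ is torsion free. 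With these, (i)$\Leftrightarrow$(ii)$\Leftrightarrow$(v) is immediate. The jump formula $h^q(y')=r_q+\ell_q(y')+\ell_{q+1}(y')$ gives (iii). Leray together with Cartan's Theorems A and B on the original Stein $Y$ gives (iv). What your approach buys is transparency about where one-dimensionality of the base enters: in the injectivity of all base-change maps, and in the identification of torsion free with locally free.

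Three points should be made explicit in a write-up.

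\begin{itemize}
\item The conditions must be read locally. (ii) and (iv) are pointwise at $y$, but (i), (iii) and (v) only make sense on a neighborhood of $y$. Read globally, torsion of $R^{q+1}f_*\mathscr F$ at some other point would break the equivalence. Your proof establishes exactly this local equivalence. No shrinking is needed for (iv): the factorization through $H^0(Y,R^jf_*\mathscr F)$ takes place on the original Stein $Y$, while injectivity of $\varphi^j(y)$ is a statement about stalks.
\item The injection in the universal coefficient sequence must be identified with $\varphi^j(y')$. This is the naturality of Grauert's complex with respect to the base change $\mathcal O_Y\to\mathbb C(y')$.
\item If you use EGA's definition of cohomological flatness in dimension $q$ (exactness of $M\mapsto H^q(K^\bullet_y\otimes M)$) instead of yours, the same split sequence shows it is equivalent to flatness of $H^q(K^\bullet_y)$ and $H^{q+1}(K^\bullet_y)$. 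So (i)$\Leftrightarrow$(v) holds under either definition.
\end{itemize}
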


We are now ready to give an affirmative answer to
Problem~\ref{prop-matsu} in the smooth case.

\begin{theorem}\label{prop-section-extension}
Let $f: X\to \Delta$ be a smooth proper surjective K\"ahler
morphism of complex manifolds with connected fibers.
Assume that $K_{X_t}$ is nef for each $t\in \Delta$.
Then, for each integer $m\geq1$, the restriction map
\[
H^0\bigl(X,mK_{X/\Delta}\bigr)
\longrightarrow H^0(X_0,mK_{X_0})
\]
is surjective. Equivalently, for each positive integer $m$, the $m$-genus $P_m(X_t)$ of $X_t$ is constant, by Theorem \ref{intro-main-criterion}.
\end{theorem}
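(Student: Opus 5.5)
The plan is to apply Cao's extension theorem (Theorem~\ref{thm-ot}) on the restricted family over the smaller disk $\Delta'$. We take $Y:=f^{-1}(\Delta')$ and $q:=f|_Y$; by the reduction at the start of the section, $Y$ is K\"ahler with K\"ahler form $\Omega$. The line bundle is $L:=(m-1)K_{X/\Delta}|_Y$, equipped with the singular metric $h_L:=h^{\otimes(m-1)}$, where $h$ is the metric of Proposition~\ref{prop-total-compactness}. By part (ii) of that proposition, $\sqrt{-1}\,\Theta_{h_L}(L)=(m-1)\,dd^c\Phi\ge0$. For $m=1$ we take $L$ trivial with the trivial metric, and the argument still applies.

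Next we check that $h_L|_{Y_0}$ is well defined. Fix any nonzero $s\in H^0(X_0,mK_{X_0})$; this is the only nontrivial case, since the zero section extends trivially. Estimate \eqref{eq-limit-barrier} gives $\Phi|_{X_0}\ge\frac1m\log|g|^2-C_s$. Hence $\Phi|_{X_0}\not\equiv-\infty$ on every chart: $X_0$ is connected, and $g\not\equiv0$ on any open set. So the restricted weight is a genuine quasi-psh weight. We then view $s$ as a section $u$ of $K_{Y_0}\otimes L|_{Y_0}$, using the adjunction identification $K_{Y_0}\cong K_{X/\Delta}|_{X_0}$ given by $\eta_t=\eta/dt$. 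Lemma~\ref{lem-fiber-integrability} says exactly that $\int_{X_0}|s|^2_{h^{m-1}}<\infty$, which is the finite $L^2$ norm hypothesis. Theorem~\ref{thm-ot} then yields $U\in H^0(Y,K_Y\otimes L)$ with $U|_{Y_0}=u\wedge dt$. Identifying $K_Y\otimes L\cong mK_{X/\Delta}$ via $dt$, we obtain $\tilde s:=U/dt\in H^0(Y,mK_{X/\Delta})$ with $\tilde s|_{X_0}=s$.

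To pass from $\Delta'$ back to $\Delta$, we apply Theorem~\ref{intro-main-criterion} with $\mathscr F=mK_{X/\Delta}$, which is flat since it is locally free and $f$ is smooth, and with $q=0$. Surjectivity of $r^0$ over $\Delta'$ at $y=0$ gives local constancy of $P_m(X_t)$ near $0$. The condition in degree $q-1=-1$ is vacuous. The same construction applies at every point $t_0\in\Delta$, replacing $0$ by $t_0$ and recentring the disk. This is legitimate because all hypotheses (nefness on every fiber, K\"ahler morphism) are translation invariant, and Lemma~\ref{lem-density-barrier} and Proposition~\ref{prop-total-compactness} only used shrinking around the chosen centre. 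Hence $t\mapsto P_m(X_t)$ is locally constant on the connected disk $\Delta$, so it is constant. By the equivalence (iii)$\Leftrightarrow$(iv) in Theorem~\ref{intro-main-criterion}, now applied over the whole Stein curve $\Delta$, the restriction map $H^0(X,mK_{X/\Delta})\to H^0(X_0,mK_{X_0})$ is surjective.

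The main obstacle is the globalization step: matching the hypotheses of Theorem~\ref{intro-main-criterion} to the original disk. The criterion as stated is pointwise in $y$ but demands surjectivity from the global $H^0(X,\cdot)$, so one must verify the direction being used. I would first argue through (iii): local constancy of $P_m$ is a purely local statement, and it implies global surjectivity over the Stein base by (iii)$\Rightarrow$(iv). A secondary check is that Cao's theorem only needs $h_L|_{Y_0}\not\equiv+\infty$ in the weight sense, and the lower barrier above guarantees this.
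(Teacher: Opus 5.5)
Your proposal is correct. Its core is the paper's proof: apply Cao's theorem with $L=(m-1)K_{X/\Delta}$ and the metric $h^{m-1}$, get well-definedness of the restricted weight from the barrier \eqref{eq-limit-barrier}, and get the $L^2$ condition from Lemma~\ref{lem-fiber-integrability}.

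The difference is the final step. The paper's proof stops once it has $\widetilde s\in H^0(X_r,mK_{X_r})$ over the shrunken disk, and it never actually returns to the original $\Delta$, despite the announcement at the start of Section~\ref{sect-exten-smooth}. You do return to $\Delta$, in three steps:
\begin{enumerate}[(a)]
\item You recentre the construction at every $t_0\in\Delta$. This is legitimate: nefness on every fiber and the K\"ahler-morphism hypothesis survive restriction to any subdisk, and the shrinkings in Lemma~\ref{lem-density-barrier} and Proposition~\ref{prop-total-compactness} are only about the chosen centre.
\item This gives local constancy of $P_m$ at every point, hence constancy on the connected disk.
\item You then use (iii)$\Rightarrow$(iv) of Theorem~\ref{intro-main-criterion} over the whole Stein disk.
\end{enumerate}
Your route yields both clauses of the statement over the original disk. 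In particular it gives constancy of $P_m$ on all of $\Delta$, whereas the paper's argument as written yields it only near $0$.

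For the surjectivity clause alone, recentring is unnecessary. Put $\mathscr F=mK_{X/\Delta}$. The restriction map factors as
\[
H^0(\Delta,f_*\mathscr F)\to(f_*\mathscr F)_0\otimes\mathbb C(0)\to H^0(X_0,\mathscr F_{(0)}).
\]
The first arrow is onto by Theorem~A on $\Delta$. The second is onto because the extension over the small disk already realizes every $s$ as the image of a germ in $(f_*\mathscr F)_0$.
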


\begin{proof}
We identify
$K_{X/\Delta}$ with $K_X$ throughout the proof, and
restrictions of pluricanonical sections to $X_0$ are understood
via the corresponding adjunction identification.
Note that the $m=1$ case is trivial.

Fix an integer $m\geq 2$ and a section
$0 \ne s\in H^0(X_0,mK_{X_0})$.
Choose a disk $\Delta_r\Subset\Delta$ centered at $0$ such that
$X_r:=f^{-1}(\Delta_r)$ is K\"ahler and
Proposition~\ref{prop-total-compactness} applies.
Set $L:=K_{X_r}^{\otimes(m-1)}$ and equip it with
the metric $h^{m-1}$.
Its curvature is semipositive.
Since $s\neq0$, the lower bound~\eqref{eq-limit-barrier} shows
that the restricted weight is not identically $-\infty$ on $X_0$.
Thus the restricted metric is well defined, and
Lemma~\ref{lem-fiber-integrability} gives the required integrability.
Theorem~\ref{thm-ot} therefore yields a section
$\widetilde s\in H^0(X_r,mK_{X_r})$
with $\widetilde s|_{X_0}=s$.
\end{proof}

\section{Local stability of canonical nefness}
\label{sec-nef-openness}

The deformation behavior of nef canonical and adjoint canonical bundles
has recently been studied by Li--Rao--Wang~\cite{LRW25} and
Li--Liu--Rao~\cite{LLR26}. We refer to these two papers for earlier
results and further historical background. In particular,
Li--Liu--Rao proved that canonical nefness is constant on the whole disc
for smooth K\"ahler morphisms~\cite[Theorem 1.2]{LLR26}. Independently,
Wang proved the corresponding local result in fiber dimension four for
smooth weakly K\"ahler morphisms~\cite{Wang26}.

\begin{theorem}\label{thm-nef-openness}
Let $\pi:X\to \Delta$ be a smooth proper surjective
holomorphic map of complex manifolds with connected fibers
$X_t:=\pi^{-1}(t)$.
Suppose that $X_0$ is K\"ahler and that $K_{X_0}$ is nef.
Then there is $\varepsilon>0$ such that $K_{X_t}$ is nef for each
$t\in \Delta_{\varepsilon}:=\{t\in\Delta:|t|<\varepsilon\}$.
There is no restriction on the fiber dimension.
The total space $X$ is not assumed to be K\"ahler.
\end{theorem}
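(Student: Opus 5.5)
We argue by contradiction, following the outline announced in the introduction: stable-map compactness, deformation theory of rational curves, and a hard Lefschetz dimension estimate. By Kodaira--Spencer stability, after shrinking $\Delta$ every fiber $X_t$ is K\"ahler, and the fibers are diffeomorphic to $X_0$. We may therefore identify $H^2(X_t,\mathbb R)\cong H^2(X_0,\mathbb R)$ via the Gauss--Manin connection. Under this identification $c_1(K_{X_t})$ is the constant class $c_1(K_{X/\Delta})|_{X_t}$. Suppose there were $t_k\to0$ with $K_{X_{t_k}}$ not nef. For a compact K\"ahler manifold, non-nefness of $K$ should yield a $K$-negative rational curve. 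Concretely, we would invoke the cone/bend-and-break statement available in the K\"ahler setting (via Demailly--P\u{a}un and the known existence of rational curves when $K$ is not pseudoeffective, or when a nef-failing face is $K$-negative). This gives a rational curve $C_k\subset X_{t_k}$ with $0<-K_{X_{t_k}}\cdot C_k\le 2n$.

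The key is to bound the areas of $C_k$ uniformly so that compactness applies. Fix a smooth family of Hermitian (fiberwise K\"ahler near $0$) forms $\omega_t$ depending smoothly on $t$, with $\omega_0$ K\"ahler. The difficulty is that $\omega_t$ need not be closed, so $\int_{C_k}\omega_{t_k}$ is not topological. Here the hard Lefschetz dimension estimate enters. Since $h^{1,1}$ is locally constant near a K\"ahler fiber (by the Hodge decomposition and upper semicontinuity), the space $H^{1,1}(X_t,\mathbb R)$ varies continuously as a subspace of $H^2(X_0,\mathbb R)$. The K\"ahler class $[\omega_0]$ can then be approximated by classes $\alpha_t\in H^{1,1}(X_t)$ that are K\"ahler on $X_t$, with $\alpha_t\to[\omega_0]$ in $H^2(X_0,\mathbb R)$. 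Because the rational curves produced can be taken to be $\alpha_t$-extremal, we expect bend-and-break to give the additional normalization $\alpha_{t_k}\cdot C_k\le c\,(-K\cdot C_k)$ for a uniform $c$. I would try to obtain this by choosing $C_k$ on an extremal ray of the cone of $X_{t_k}$ that is negative for $K+\delta\alpha$. The hard Lefschetz estimate would then be used to show that the homology classes $[C_k]\in H_2(X_0,\mathbb Z)$ lie in a bounded set, and hence in a finite set. I expect this uniform area bound to be the main obstacle: it is the only place where the non-K\"ahlerness of the total space must be bypassed, and it is exactly what makes Gromov compactness applicable.

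With bounded area, Gromov compactness for stable maps in the family $X\to\Delta$ (relative to the continuous family of tamed almost complex structures) gives a subsequence converging to a stable map into $X_0$. Its image is a connected union of rational curves $C_0=\sum a_iC_{0,i}$ with $[C_0]=[C_k]$. Since $K_{X/\Delta}$ is a line bundle on the total space, $K_{X_0}\cdot C_0=\lim K_{X_{t_k}}\cdot C_k<0$. This contradicts the nefness of $K_{X_0}$: for any effective curve $D$, $\int_DK_{X_0}\ge0$ by Definition~\ref{def-nef}, since $\int_D(\eta+dd^c\psi_\delta)\ge-\delta\int_D\omega$ and we may let $\delta\to0$. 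Hence $K_{X_t}$ is nef for all $|t|<\varepsilon$. Deformation theory of rational curves, specifically $\dim_{[f]}\mathrm{Mor}\ge -K\cdot C+n$, is used to guarantee that the curves on nearby fibers are genuinely produced, and to rule out the degenerate case where the limit is a contracted (multiple) component.
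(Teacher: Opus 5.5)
\textbf{There is a genuine gap at exactly the point you flag.} The uniform bound on the areas of the curves $C_k$ (equivalently, on their classes $[C_k]$) is never established, and the mechanism you sketch cannot supply it.

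Suppose $C_k$ spans an extremal ray on which $K_{X_{t_k}}+\lambda_{t_k}\alpha_{t_k}$ vanishes, where $\lambda_{t_k}>0$ is the nef threshold of $K_{X_{t_k}}$ with respect to $\alpha_{t_k}$. Then $\alpha_{t_k}\cdot C_k=(-K\cdot C_k)/\lambda_{t_k}$. So your normalization $\alpha_{t_k}\cdot C_k\le c\,(-K\cdot C_k)$ is the same as $\lambda_{t_k}\ge 1/c$. Since $K_{X_0}$ is nef, the threshold at $t=0$ is $0$. A uniform positive lower bound for the thresholds of the bad fibers is therefore essentially the statement being proved, and nothing in your sketch prevents $\lambda_{t_k}\to0$.

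Even granting a length bound $-K\cdot C_k\le 2n$, bend-and-break controls $-K\cdot C$, not $\alpha\cdot C$. Hard Lefschetz is a statement about cup products and bounds no curve classes. Without the area bound, Gromov compactness along $t_k\to0$ is unavailable, and the dimension estimate for $\operatorname{Hom}$ does no real work in your argument.

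\textbf{The missing idea is to avoid varying curve classes altogether.} The paper argues as follows.
\begin{enumerate}
\item Pull back $\omega_0$ by an Ehresmann trivialization to get a closed real $2$-form $\Omega$ on $X$. It is positive on vertical complex lines over a fixed smaller disc, so for a \emph{fixed} class the $\Omega$-area is topological.
\item Fix one bad fiber $F=X_b$ and a $K_F$-negative rational curve $C$ of least $\Omega_b$-area. Put $\ell=-K_F\cdot C$ and $a=\int_C\Omega_b$.
\item For $\beta=[C]$, Gromov compactness makes the relative stable-map space $\mathcal M_\beta\to\Delta$ proper. Nefness of $K_{X_0}$ excludes points over $0$, so by Remmert its image is discrete. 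Hence every deformation in $X$ of the normalization $f:\mathbb P^1\to C$ stays inside $F$.
\item Because $N_{F/X}$ is trivial, $\dim_{[f]}\operatorname{Hom}(\mathbb P^1,X)\ge n+1+\ell$. Minimality of area rules out reducible domains. This forces a compact family of distinct, birationally parametrized curves in $F$ of dimension $\ge n+\ell-2\ge n-1$.
\item The class $\alpha=c_1(K_F)+(\ell/a)[\Omega_b]$ is the parallel transport of a K\"ahler class on $X_0$, so it satisfies hard Lefschetz, and it vanishes on this family. An incidence-correspondence argument then bounds the family's dimension by $n-2$, a contradiction.
\end{enumerate}
So hard Lefschetz bounds the dimension of a family of curves orthogonal to $\alpha$, not the curve classes. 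The one extra dimension coming from deformations in the total space, combined with confinement, is the engine of the proof rather than an auxiliary check.
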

We prove Theorem~\ref{thm-nef-openness} by contradiction. A fiber with
non-nef canonical bundle contains a canonical-negative rational curve and
we choose one of least area with respect to a vertically positive closed
real two-form. Stable-map compactness and the nefness of $K_{X_0}$
confine its deformations to that fiber. Deformation theory in the total
space then gives a lower bound for the dimension of the resulting
compact curve family, while a transported real hard-Lefschetz class and
a two-point incidence space give a strictly smaller upper bound. This
argument requires neither a K\"ahler form on the total space nor the
projective contraction furnished by the transcendental base-point-free
theorem of C. Hacon--L. Xie~\cite{HX26}.

All dimensions below are complex dimensions.
Cohomology is real de Rham cohomology unless coefficients are specified.
On a compact K\"ahler manifold, a nef class is a class in the closure of
the K\"ahler cone.
Intersections with an irreducible curve are computed on its normalization.

\begin{lemma}[{\cite[Lemma~3.19 and Remark~3.20]{LRW25}}]
\label{lem-negative-curves}
Let $M$ be a connected compact K\"ahler manifold.
If $K_M$ is not nef, there is an irreducible rational curve $C\subset M$
such that $K_M\cdot C<0$.
\end{lemma}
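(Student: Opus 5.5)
The plan is to combine Mori-type bend-and-break with the cone theorem for compact Kähler manifolds. Suppose $K_M$ is not nef. Since $M$ is compact Kähler, the nef cone is the closure of the Kähler cone. Hence some class in $H^{1,1}(M,\mathbb R)$ pairs negatively with $c_1(K_M)$ while being a limit of positive classes. By duality (Demailly--P\u{a}un, and the description of the dual of the Kähler cone), there is then a closed positive $(n-1,n-1)$-current $T$ with $K_M\cdot T<0$. The first step is to reduce this to a statement about curves. I would invoke the cone theorem for compact Kähler manifolds: Höring--Peternell in dimension three, and in general the transcendental version due to Cao--Höring, Das--Hacon--P\u{a}un and Hacon--Xie (the last is cited above). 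It gives
\[
\overline{NA}(M)=\overline{NA}(M)_{K_M\geq0}+\sum_j\mathbb R_{\geq0}[C_j],
\]
with $C_j$ rational curves satisfying $-(n+1)\le K_M\cdot C_j<0$. If $K_M$ is not nef, the left side contains a $K_M$-negative class, so at least one extremal ray $\mathbb R_{\geq0}[C_j]$ exists. That curve $C:=C_j$ is the required one.

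If one wants to avoid quoting the full cone theorem, the alternative route is as follows. First show, via Demailly--P\u{a}un and a Boucksom-type divisorial Zariski decomposition, that failure of nefness of $K_M$ forces either a $K_M$-negative curve or a non-pseudoeffective $K_M$. In the second case, the result of Brunella/Ou (for non-pseudoeffective $K$ on Kähler manifolds, via uniruledness) produces rational curves covering $M$. On such a covering family $K_M\cdot C<0$, since $-K_M\cdot C=\deg T_M|_C\ge 2$ for free rational curves. In the first case, a $K_M$-negative curve $C'$ moves in a family of dimension at least $-K_M\cdot C'+n-3$ by deformation theory. Since $M$ is Kähler, bend-and-break with a Kähler form $\omega$ bounding degrees then breaks off a rational component with negative $K_M$-degree. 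This works as in Mori's argument but uses Douady space compactness in place of the Chow variety; Fujiki's results give compactness of bounded-area components in class $\mathscr C$.

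The main obstacle is the bend-and-break step in the non-algebraic setting. Mori's argument uses reduction to characteristic $p$ to produce enough deformations, and that is unavailable for non-projective Kähler $M$. This is exactly why the statement is taken from \cite[Lemma~3.19, Remark~3.20]{LRW25}, which rely on the Kähler cone theorem. So the plan is to cite that theorem for $\overline{NA}(M)$ together with the existence of $K_M$-negative extremal rational curves. The only verification needed is that "not nef" yields a $K_M$-negative class in $\overline{NA}(M)$, and this follows from the duality between the nef cone and $\overline{NA}(M)$ on compact Kähler manifolds.
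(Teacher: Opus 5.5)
Your proposal has a genuine gap in the case where $K_M$ is pseudoeffective but not nef, which is where the real content of the lemma lies.

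\textbf{The main route.} You hand this case to a cone theorem for an arbitrary compact K\"ahler manifold of arbitrary dimension. The works you list do not provide such a statement: they treat dimension three or four, or the pseudoeffective case under lower-dimensional hypotheses, and Hacon--Xie is a base-point-free theorem, not a cone theorem. So the citation does not carry the proof.

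\textbf{The fallback route} breaks in two places.
\begin{enumerate}
\item Demailly--P\u{a}un plus a divisorial Zariski decomposition does not produce a $K_M$-negative curve. On a non-projective K\"ahler manifold, failure of nefness of a pseudoeffective class need not be witnessed by any curve at all. What is special about $K_M$ is that the relevant subvarieties of its non-nef locus are uniruled, and proving that is exactly the step you skip.
\item Bend-and-break is unavailable, as you note. Moreover, the estimate $-K_M\cdot C'+n-3$ is the count for rational curves. It applies only when $C'$ is already rational, and then there is nothing left to prove. For genus $g$ the bound $-K_M\cdot C'+n(1-g)$ cannot be made large without Frobenius.
\end{enumerate}

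\textbf{What the paper does.} It splits according to whether $K_M$ is pseudoeffective.
\begin{enumerate}
\item If $K_M$ is pseudoeffective but not nef, Cao--H\"oring \cite[Theorem~1.3]{CH20} give a rational curve $C$ with $K_M\cdot C<0$. This holds provided the criterion ``pseudoeffective canonical bundle if and only if not uniruled'' is known for compact K\"ahler manifolds of lower dimension. Ou's theorem \cite{Ou25} (alternatively Cao--P\u{a}un \cite{CP25}) proves this criterion in all dimensions, so the Cao--H\"oring result becomes unconditional. Using Ou inside the pseudoeffective case is the idea your proposal lacks.
\item If $K_M$ is not pseudoeffective, the same criterion makes $M$ uniruled. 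A free member $f\colon\mathbb P^1\to M$ of a covering family has $f^*T_M\cong\bigoplus_i\mathcal O_{\mathbb P^1}(a_i)$ with all $a_i\geq0$ and some $a_i\geq2$. Hence $\deg f^*K_M<0$, and the image of $f$ is the required curve. This half of your alternative route agrees with the paper.
\end{enumerate}
No cone theorem is needed.
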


\begin{proof}
The case of curves is immediate.
In higher dimension, W. Ou proved that a compact K\"ahler manifold has
pseudoeffective canonical bundle if and only if it is not uniruled~\cite[Theorem 1.1]{Ou25}.
See also the alternative proof by Cao--P\u{a}un~\cite[Corollary 5.3 and Section 6]{CP25}.
Thus the lower-dimensional hypothesis in~\cite[Theorem 1.3]{CH20} holds in all dimensions.
That theorem gives the required curve when $K_M$ is pseudoeffective
but not nef.

Suppose that $K_M$ is not pseudoeffective.
The same criterion of Ou and Cao--P\u{a}un shows that $M$ is uniruled.
Work locally on a smooth parameter space for a covering family of
rational curves, and trivialize the family of their normalizations.
This gives a holomorphic family of maps from $\mathbb P^1$ whose
evaluation dominates an open subset of $M$.
At a suitable smooth parameter and a suitable point of the source,
the evaluation map has surjective differential.
The parameter directions give global sections of $f^*T_M$.
The source direction is also obtained from global vector fields on
$\mathbb P^1$.
Thus, for a suitable nonconstant map $f:\mathbb P^1\to M$,
$H^0(\mathbb P^1,f^*T_M)$ generates $f^*T_M$ at a point.
Write $f^*T_M\cong\bigoplus_{i=1}^{\dim M}\mathcal O_{\mathbb P^1}(a_i)$.
Generation at a point gives $a_i\geq0$ for each $i$.
The nonzero differential
$T_{\mathbb P^1}\cong\mathcal O_{\mathbb P^1}(2)\to f^*T_M$
forces some $a_i\geq2$.
Therefore $\deg f^*K_M=-\sum_i a_i<0$.
The reduced image of $f$ is the required rational curve.
\end{proof}

\begin{lemma}\label{lem-closed-form}
Let $\pi:X\to \Delta$ be a smooth proper holomorphic map,
and suppose that $X_0$ is compact K\"ahler.
Fix a K\"ahler form $\omega_0$ on $X_0$.
After shrinking $\Delta$, all fibers are K\"ahler and there is a smooth
closed real two-form $\Omega$ on $X$ such that
$\Omega|_{X_0}=\omega_0$ and
\begin{equation}\label{eq-vertical-positive}
 \Omega(v,Jv)>0
 \qquad\text{for each nonzero real vertical tangent vector }v.
\end{equation}
Here $J$ is the complex structure on $X$.
The form $\Omega$ need not have type $(1,1)$.
\end{lemma}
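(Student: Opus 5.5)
The plan is to produce $\Omega$ topologically, by extending the de Rham class of $\omega_0$ and then correcting a representative, and to get the K\"ahlerness of nearby fibers from openness of positivity on compact sets.

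\emph{Step 1: extend the class.} After shrinking $\Delta$ to a disk, Ehresmann's theorem gives a $C^\infty$ trivialization $X\cong X_0\times\Delta$ over $\Delta$. Let $p:X\to X_0$ be the induced smooth retraction, with $p|_{X_0}=\mathrm{id}$. Put $\Omega':=p^*\omega_0$. This is a smooth closed real two-form on $X$ with $\Omega'|_{X_0}=\omega_0$. In general it is not of type $(1,1)$, and the statement explicitly allows this.

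\emph{Step 2: vertical positivity.} Consider the function $(v)\mapsto\Omega'(v,Jv)$ on the unit sphere bundle of the real vertical tangent bundle $T_{X/\Delta}$, with respect to any auxiliary metric. This function is continuous. On $X_0$ it equals $\omega_0(v,Jv)>0$, because $J$ restricted to $X_0$ is the complex structure of $X_0$ and $\omega_0$ is K\"ahler there. The sphere bundle over $X_0$ is compact, so the function is bounded below by some $c>0$ there, and hence stays positive on a neighborhood of that compact set. By properness of $\pi$, such a neighborhood contains $\pi^{-1}(\Delta_r)$ for some $r>0$. Shrinking $\Delta$ to $\Delta_r$ then gives \eqref{eq-vertical-positive} with $\Omega:=\Omega'$.

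\emph{Step 3: K\"ahlerness of the fibers.} On each fiber $X_t$, the form $\Omega_t:=\Omega|_{X_t}$ is closed and its $(1,1)$-part $\Omega_t^{1,1}$ is positive, since $\Omega_t(v,Jv)=\Omega_t^{1,1}(v,Jv)$. If $\Omega_t^{1,1}$ were closed, it would be a K\"ahler form and we would be done. It is not closed in general, because $d\Omega_t=0$ only gives $\bar\partial\Omega_t^{1,1}=-\partial\Omega_t^{0,2}$.

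The cleanest way to get K\"ahlerness of $X_t$ for small $t$ is to invoke the Kodaira--Spencer stability theorem directly. That theorem also provides a smooth family of K\"ahler forms $\omega_t$ on $X_t$, depending smoothly on $t$ and extending $\omega_0$. Alternatively, one can note that a compact complex manifold carrying a closed real two-form with positive $(1,1)$-part is K\"ahler when it satisfies the $\partial\bar\partial$-lemma, and that the $\partial\bar\partial$-lemma is open in small deformations of compact K\"ahler manifolds. In either route, this step is a citation rather than a construction.

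\emph{Expected obstacle.} The only genuinely nontrivial point is the K\"ahlerness of the nearby fibers, and I would handle it by quoting Kodaira--Spencer. Steps 1 and 2, which produce $\Omega$, are soft: they use only the retraction and a compactness argument. No type-$(1,1)$ condition or global K\"ahler condition on $X$ is needed, consistent with the remark in the statement.
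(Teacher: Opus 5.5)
Your argument is correct and is essentially the paper's proof. Both use an Ehresmann trivialization that is the identity on $X_0$, pull back $\omega_0$ along the induced retraction, get vertical positivity near $X_0$ from openness and compactness, and get K\"ahlerness of the nearby fibers from Kodaira--Spencer stability; the paper simply invokes Kodaira--Spencer first. The $\partial\bar\partial$-lemma alternative in your Step 3 is unnecessary and should be dropped, since its key claim (a $\partial\bar\partial$-manifold carrying a closed real two-form with positive $(1,1)$-part is K\"ahler) is a special case of the Streets--Tian question on Hermitian-symplectic manifolds, not a standard fact you can quote.
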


\begin{proof}
Kodaira--Spencer's local stability of K\"ahler structures~\cite{KS60} gives a smaller disc on which all fibers are
K\"ahler.
Ehresmann's theorem gives a smooth trivialization
$\Phi:X_0\times \Delta\to X$ over a smaller disc.
We choose it to be the identity on $X_0$; see~\cite[Theorem~9.3 and Remark~9.4]{Voi02}.
Define $\Omega$ by
$\Phi^*\Omega=\operatorname{pr}_1^*\omega_0$.
This form is closed and has the required central restriction.
At $t=0$, inequality~\eqref{eq-vertical-positive} holds.
Compactness of $X_0$ and continuity of the vertical complex structures
show that it still holds after one further shrinking of $\Delta$.
\end{proof}

We refer the readers to \cite{Gr85} \cite{MS12} \cite{Sa99} \cite{Sie99} more details about stable map, Gromov's compact theorem and reduced coarse space of unmarked
genus-zero stable maps.
\begin{definition}
A \emph{stable map} to a complex manifold \(Y\) is a holomorphic map
\[
f : (C, p_1, \dots, p_k) \longrightarrow Y
\]
such that:

\begin{center}
\(C\) is a connected complete nodal curve, \\
\(p_1, \dots, p_k\) are distinct smooth marked points, \\
\(f_*[C] = \beta,\)
\end{center}
and the automorphism group
\[
\operatorname{Aut}(C, p_1, \dots, p_k, f)
\]
is finite.

An \emph{automorphism} here is a biholomorphism
\[
\varphi : C \to C
\]
fixing every marked point and satisfying
\[
f \circ \varphi = f.
\]
\end{definition}
We will use unmarked genus-zero stable maps.
Their domains are connected compact nodal curves of arithmetic genus
zero, so their irreducible components are smooth rational curves and
their dual graphs are trees.
A component on which the map is constant must have at least three nodes.
Maps are identified by isomorphisms of their domains.

\begin{lemma}\label{lem-relative-maps}
Let $p:Y\to \Delta$ be a smooth proper holomorphic map with connected
fibers, and let $\Omega$ be a smooth closed real two-form on $Y$
that satisfies~\eqref{eq-vertical-positive} for $p$.
Fix a smooth trivialization over $\Delta$ and a class
$\beta\in H_2(Y_0,\mathbb Z)$.
Let $\mathcal M_{\beta}$ be the reduced coarse space of unmarked
genus-zero stable maps to fibers of $p$ in the transported class $\beta$.
Then $\mathcal M_{\beta}$ is a complex space, and its parameter map
$q_{\beta}:\mathcal M_{\beta}\to \Delta$ is proper.

Suppose, in addition, that $L$ is a holomorphic line bundle on $Y$,
that $L|_{Y_0}$ is nef, and that
$\langle c_1(L|_{Y_0}),\beta\rangle<0$.
Then $q_{\beta}(\mathcal M_{\beta})$ is a closed discrete subset of $\Delta$.
In particular, let $f:\mathbb P^1\to Y_b$ represent $\beta$.
The reduced local germ of $\operatorname{Hom}(\mathbb P^1,Y)$ at $[f]$
is supported on maps to $Y_b$.
\end{lemma}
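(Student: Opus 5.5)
The plan has three parts: the complex structure and properness of $\mathcal M_\beta$, the discreteness of $q_\beta(\mathcal M_\beta)$, and the final claim about $\operatorname{Hom}(\mathbb P^1,Y)$.

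\emph{Structure and properness.} I will use the standard construction of the relative moduli space of genus-zero stable maps as a complex space; see the references \cite{Sie99} and \cite{MS12} cited above. Locally on $\Delta$ one stratifies by dual trees. Each stratum is described by holomorphic maps from trees of $\mathbb P^1$'s into fibers, modulo automorphisms. Smoothing of nodes glues the strata into a complex-analytic coarse space with a holomorphic parameter map $q_\beta$.

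Properness of $q_\beta$ is Gromov compactness. The ingredient needed is a uniform energy bound over compact subsets of $\Delta$. For a stable map $u$ into $Y_t$ in the transported class $\beta$, the energy with respect to the tamed fiberwise metric $g_t(v,w)=\Omega(v,Jw)$, after symmetrization, is bounded by $\int u^*\Omega$ plus a controlled error. The form $\Omega$ is closed, and the trivialization transports $\beta$. Hence $\int u^*\Omega=\langle[\Omega|_{Y_t}],\beta\rangle=\langle[\Omega|_{Y_0}],\beta\rangle$ is constant in $t$.

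Since $\Omega$ need not be of type $(1,1)$, I will note that \eqref{eq-vertical-positive} says $\Omega|_{Y_t}$ tames $J_t$. Taming suffices for the energy identity $E(u)\le C\int u^*\Omega$. Gromov compactness for tame almost complex structures varying continuously in $t$ then gives sequential compactness of $q_\beta^{-1}(K)$ for $K\Subset\Delta$, which is properness.

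\emph{Discreteness.} The image $q_\beta(\mathcal M_\beta)$ is an analytic subset of $\Delta$ by Remmert's proper mapping theorem applied to $q_\beta$. An analytic subset of a disk is either closed discrete or all of $\Delta$ on a component; here $\Delta$ is connected, so it is either discrete or all of $\Delta$. It therefore suffices to show that $0$ is not in the closure of $q_\beta(\mathcal M_\beta)\setminus\{0\}$. I will argue this first and then use analyticity to conclude.

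Suppose $t_j\to0$ with $t_j\ne0$ and stable maps $u_j$ into $Y_{t_j}$ in class $\beta$. By properness, a subsequence converges to a stable map $u$ into $Y_0$ of class $\beta$. Its components are rational curves $C_k$ with $\sum d_k[C_k]=\beta$. Nefness of $L|_{Y_0}$ gives $L\cdot C_k\ge0$, so $\langle c_1(L|_{Y_0}),\beta\rangle\ge0$. This contradicts the hypothesis $\langle c_1(L|_{Y_0}),\beta\rangle<0$.

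This argument only excludes accumulation at $0$. Analyticity then upgrades it: since $0$ is not a limit point of the image, the image is not all of $\Delta$, so it is closed and discrete.

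\emph{The claim about $\operatorname{Hom}(\mathbb P^1,Y)$.} Let $f$ represent $\beta$ in $Y_b$. Any map in the reduced germ at $[f]$ is a map from $\mathbb P^1$ to $Y$. Composing with $p$ gives a holomorphic map $\mathbb P^1\to\Delta$, which is constant, so the map lands in some fiber $Y_t$ with $t$ near $b$. Its class is the transport of $\beta$ by continuity, so it defines a point of $\mathcal M_\beta$ over $t$. Since $q_\beta(\mathcal M_\beta)$ is discrete, $t=b$ on a neighborhood of $[f]$.

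I expect the main obstacle to be the properness step. Two points need care there: the non-$(1,1)$ form $\Omega$ only tames the fiber complex structures, and the energy bound must be uniform in $t$. Controlling the energy by $\int u^*\Omega$ through taming handles both.
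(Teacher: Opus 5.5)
Your proposal is correct and follows the paper's proof essentially step by step. The steps are Siebert's analytic construction, a uniform area bound from vertical positivity (your taming/symmetrized-metric bound is the same comparison as the paper's $\gamma(v,Jv)\le A_K\Omega(v,Jv)$), Gromov compactness for properness, Remmert's theorem plus nefness for discreteness, and the ``nearby maps land in the discrete image'' argument for the $\operatorname{Hom}$ germ. The one difference is that your sequence argument in the discreteness step is a detour: the limit map is itself a stable map over $0$ in class $\beta$, so nefness directly gives $0\notin q_\beta(\mathcal M_\beta)$, and this is how the paper shows that the analytic image is a proper subset of $\Delta$.
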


\begin{proof}
\smallskip
\noindent\hypertarget{step-relative-maps-space}{\textbf{Step 1. The
relative stable-map space and its parameter map.}}
The analytic stable-map construction applies to any complex target~\cite[Proposition 1.2]{Sie99}.
It gives an analytic orbispace, with a complex space as its coarse space.
Genus, the number of marked points, and the homology class are locally
constant in this construction.
Taking the indicated part and then its reduction gives
$\mathcal M_{\beta}$.
Any holomorphic map from a connected compact nodal curve to the disc
is constant.
Thus a stable map to $Y$ lies in one fiber of $p$.
The value of this constant defines the holomorphic map $q_{\beta}$.
On a local universal family, it can be computed by evaluation at a local
section through a smooth point of the domain.
These local definitions agree and descend to the coarse space.

\smallskip
\noindent\hypertarget{step-relative-maps-area}{\textbf{Step 2. A
uniform area bound over compact subsets of the base.}}
To prove properness of the parameter map constructed in
\stepref{step-relative-maps-space}{1}, fix a compact subset $K\subset \Delta$
and a smooth Hermitian metric on $Y$,
with associated real $(1,1)$-form $\gamma$.
On the compact set $p^{-1}(K)$, vertical positivity gives a constant
$A_K>0$ such that
$\gamma(v,Jv)\leq A_K\Omega(v,Jv)$ for vertical real tangent vectors.
Thus a stable map $u:D\to Y_t$ with $t\in K$ satisfies
\begin{equation*}
 \int_Du^*\gamma
 \leq A_K\int_Du^*\Omega
 =A_K\langle[\Omega|_{Y_0}],\beta\rangle.
\end{equation*}
Integrals on a nodal curve mean sums over its components.
The last equality follows from $d\Omega=0$ and the fixed homology class.

\smallskip
\noindent\hypertarget{step-relative-maps-properness}{\textbf{Step 3.
Gromov compactness and properness.}}
Using the uniform estimate obtained in
\stepref{step-relative-maps-area}{2}, for a sequence in
$q_{\beta}^{-1}(K)$, the images lie in the compact
set $p^{-1}(K)$ and the Hermitian areas are uniformly bounded.
The Gromov compactness theorem for stable maps therefore applies;
see~\cite[Theorem 1.1]{IS00}, with empty boundary.
The target complex structure is fixed, so the convergence condition on
complex structures in that theorem is automatic.
After a subsequence, the maps converge to a stable holomorphic map.
The genus and the total homology class are preserved.
If the base parameters converge to $t$, the limit has image in $Y_t$.
It therefore defines a point of $q_{\beta}^{-1}(K)$.
The analytic stable-map charts have the usual stable-map topology;
see~\cite[Sections 1.1 and 2]{Sie99}.
This proves properness of $q_{\beta}$.
Only compact containment and the area bound were used.
No K\"ahler form on $Y$ is required.

\smallskip
\noindent\hypertarget{step-relative-maps-confinement}{\textbf{Step 4.
Discreteness and confinement of local deformations.}}
Set $d_L:=\langle c_1(L|_{Y_0}),\beta\rangle<0$.
The degree of $L$ on any map in $\mathcal M_{\beta}$ equals $d_L$.
There is no such map over $0$.
Indeed, the pushforward of its domain would be an effective curve
cycle on $Y_0$ with negative $L|_{Y_0}$-degree.
This contradicts nefness.
By the properness proved in
\stepref{step-relative-maps-properness}{3}, Remmert's proper mapping
theorem now shows that
$q_{\beta}(\mathcal M_{\beta})$ is a proper closed analytic subset of
the disc, and thus is discrete.

Finally, a sufficiently small deformation of $f$ has the same class
$\beta$ and remains nonconstant, since its $L$-degree is negative.
It therefore gives a point of $\mathcal M_{\beta}$.
Its base parameter lies in the discrete set just found and is close to
$b$, so it equals $b$.
On a reduced parameter germ, this pointwise statement implies that the
universal map factors holomorphically through $Y_b$.
\end{proof}

\begin{lemma}\label{lem-minimal-area}
Let $M$ be a compact complex manifold, and let $\Omega_M$ be a smooth
closed real two-form such that $\Omega_M(v,Jv)>0$ for $v\ne0$.
Suppose that $M$ contains a $K_M$-negative irreducible rational curve.
Then there is such a curve $C$ of least $\Omega_M$-area.
Put $a:=\int_C\Omega_M$ and $\beta:=[C]\in H_2(M,\mathbb Z)$.
Any unmarked genus-zero stable map to $M$ in class $\beta$ has domain
$\mathbb P^1$ and is birational onto its image.

The corresponding reduced stable-map space $\mathcal H_{\beta}$ is
compact.
It has a universal holomorphic $\mathbb P^1$-bundle
$p_0:U_0\to\mathcal H_{\beta}$ and an evaluation map $e_0:U_0\to M$.
Distinct points of $\mathcal H_{\beta}$ determine distinct reduced
image curves.
\end{lemma}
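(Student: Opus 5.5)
The proof has three parts. First we produce a least-area curve. Then we show that every stable map in the class $\beta$ is a birational map from an irreducible $\mathbb P^1$. Finally we build the compact moduli space with its universal family.

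\textbf{Existence of a least-area curve.} The $\Omega_M$-area of an irreducible curve $C$ depends only on $[C]\in H_2(M,\mathbb Z)$, and it is positive, because $\Omega_M$ restricts to a positive form on the smooth locus of the normalization. Fix a smooth Hermitian metric $\gamma$ on $M$. By compactness there is a constant $A$ with $\gamma(v,Jv)\le A\,\Omega_M(v,Jv)$. Hence the $\gamma$-area of a curve is at most $A$ times its $\Omega_M$-area. By Bishop's theorem, or Gromov compactness \cite{IS00} applied to the normalizations, only finitely many homology classes of irreducible rational curves have $\Omega_M$-area at most a given bound. The set of areas of $K_M$-negative rational curves is therefore discrete and bounded below by $0$, so it attains its minimum $a$ at some curve $C$.

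\textbf{Structure of stable maps in class $\beta$.} Let $u:D\to M$ be an unmarked genus-zero stable map with $u_*[D]=\beta$. Its nonconstant components $D_1,\dots,D_k$ have images $C_i$ of degree $d_i$, so $\sum_i d_i[C_i]=\beta$. This gives
\[
\sum_i d_i\int_{C_i}\Omega_M=a
\qquad\text{and}\qquad
\sum_i d_i\,K_M\cdot C_i=K_M\cdot C<0 .
\]
Some $C_i$ must then satisfy $K_M\cdot C_i<0$. Each $C_i$ is a rational curve, since it is the image of $\mathbb P^1$, so minimality gives $\int_{C_i}\Omega_M\ge a$. All other terms in the area sum are strictly positive. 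It follows that $k=1$ and $d_1=1$. Constant components would need at least three nodes, but a tree with a single nonconstant vertex has no vertex with three or more neighbours that can be contracted, so $D=\mathbb P^1$. The map $u$ is birational onto its image.

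\textbf{The moduli space and the universal family.} Compactness of $\mathcal H_\beta$ follows from the properness argument of Lemma~\ref{lem-relative-maps}: take $p$ to be the trivial family $M\times\Delta\to\Delta$ and restrict to the fiber over $0$, where the area bound is automatic. Every domain is $\mathbb P^1$ and every map is birational, so automorphism groups are trivial. A birational map also has no nontrivial reparametrization fixing it, since $u\circ\varphi=u$ forces $\varphi=\mathrm{id}$ on a dense open set. The orbispace of \cite{Sie99} is therefore a genuine complex space with a universal curve. Since there are no nodal degenerations, the universal curve is a holomorphic $\mathbb P^1$-fibration $p_0:U_0\to\mathcal H_\beta$, and evaluation gives $e_0$. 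Injectivity on image curves holds because a birational map $\mathbb P^1\to M$ is the normalization of its image. It is therefore determined up to isomorphism of the domain by the reduced image curve, and maps are identified modulo exactly such isomorphisms.

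\textbf{Main obstacle.} I expect the delicate point to be that $p_0$ is a locally trivial holomorphic $\mathbb P^1$-bundle, not merely a conic bundle or a flat family of genus-zero curves, over the possibly singular reduced space $\mathcal H_\beta$. I would argue this from the smoothness of the universal curve at the fibers in the Siebert charts, together with the absence of nodes, which makes the family locally isomorphic to $\mathbb P^1\times$(chart).
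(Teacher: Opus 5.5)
Your proposal is correct and follows the paper's proof in structure. It uses the same area and canonical-degree bookkeeping to force a single nonconstant component of degree one, and the same contracted-leaf argument to exclude contracted components. Phrase that argument this way: a tree with at least two vertices has at least two leaves, so some leaf is contracted and carries only one node. As you wrote it, the claim about vertices with three neighbours is not literally correct. The rest also matches: compactness from Lemma~\ref{lem-relative-maps} for $M\times\Delta\to\Delta$, trivial stabilizers, and the normalization argument for distinct image curves.

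The only real differences are in two justifications. First, the paper gets finiteness of bounded-area classes without Bishop or Gromov compactness. It uses the bound $|\int_D\theta_j|\le B_j\int_D\Omega_M$, where the closed forms $\theta_j$ represent a basis of $H^2(M,\mathbb Z)$ modulo torsion, so the integer vectors $(\int_D\theta_j)_j$ range over a finite set. Second, the paper proves local triviality of $p_0$ by normalizing three local disjoint sections to $0,1,\infty$, rather than by appealing to the structure of Siebert's charts.
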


\begin{proof}
\smallskip
\noindent\hypertarget{step-minimal-area-selection}{\textbf{Step 1.
Existence of a least-area negative rational curve.}}
Choose a negative rational curve of area $A$.
Take smooth closed real two-forms $\theta_1,\ldots,\theta_N$ whose
classes form a basis of $H^2(M,\mathbb Z)$ modulo torsion.
For each $j$, compactness and positivity on complex lines give a
constant $B_j>0$ such that
\begin{equation*}
 \left|\int_D\theta_j\right|
 \leq B_j\int_D\Omega_M
\end{equation*}
for any effective compact curve $D$.
For curves of area at most $A$, the integers $\int_D\theta_j$ are
bounded.
Only finitely many vectors of these integers can occur.
They determine the real homology class of $D$, and thus its
$\Omega_M$-area.
The nonempty set of areas of negative rational curves of area at most
$A$ is therefore finite.
Choose $C$ attaining its minimum.
It also minimizes the area among all negative rational curves.

\smallskip
\noindent\hypertarget{step-minimal-area-decomposition}{\textbf{Step 2.
The nonconstant part of a stable map.}}
For the curve chosen in \stepref{step-minimal-area-selection}{1}, let
$u:D\to M$ be a stable map in class $\beta$.
Write $C_i$ for the reduced image of a nonconstant component, and
$m_i\geq1$ for its degree onto $C_i$.
The index $i$ runs over the nonconstant domain components.
In particular, repeated image curves are counted separately.
Each $C_i$ is rational, and
\begin{equation}\label{eq-stable-decomposition}
 \sum_i m_i\int_{C_i}\Omega_M=a,
 \qquad
 \sum_i m_iK_M\cdot C_i=K_M\cdot C<0.
\end{equation}
Some $C_{i_0}$ has negative canonical degree, so
$\int_{C_{i_0}}\Omega_M\geq a$ by minimality.
All nonconstant components have positive area.
Thus~\eqref{eq-stable-decomposition} forces the component indexed by $i_0$ to be the only
nonconstant domain component, with $m_{i_0}=1$.

\smallskip
\noindent\hypertarget{step-minimal-area-domain}{\textbf{Step 3. The
domain and the map.}}
By \stepref{step-minimal-area-decomposition}{2}, there is exactly one
nonconstant component and its degree onto its image is one.
There are no contracted components.
Otherwise, the dual tree would have at least two vertices and only
one noncontracted vertex.
It would have a contracted leaf, which has only one node.
This contradicts stability.
Thus $D\cong\mathbb P^1$ and $u$ is birational onto its image.
Its automorphism group is trivial.

\smallskip
\noindent\hypertarget{step-minimal-area-family}{\textbf{Step 4.
Compactness and the universal family.}}
Using the description obtained in
\stepref{step-minimal-area-domain}{3}, we apply
Lemma~\ref{lem-relative-maps} to the constant family
$M\times\Delta\to\Delta$ with the pullback of $\Omega_M$.
Its fiber over $0$ gives the compact space $\mathcal H_{\beta}$.
Since all stabilizers are trivial, the universal orbi-family from~\cite[Proposition 1.2]{Sie99} is an ordinary universal analytic family.
All its fibers are smooth copies of $\mathbb P^1$.
The family is locally trivial: after choosing three local disjoint
sections, the fiber coordinate taking them to $0$, $1$, and $\infty$
varies holomorphically.
Thus $p_0$ is a holomorphic $\mathbb P^1$-bundle.
Finally, a degree-one map from $\mathbb P^1$ to its reduced image is
the normalization map.
The reduced image therefore determines the stable map up to isomorphism.
\end{proof}

\begin{lemma}\label{lem-two-points}
Let $Y$ be a complex manifold and $H_0$ a compact reduced complex space.
Let $p_0:U_0\to H_0$ be a holomorphic $\mathbb P^1$-bundle, and let
$e_0:U_0\to Y$ be holomorphic.
Suppose that the fiber maps are birational onto their images and that
distinct parameters give distinct reduced image curves.
Then
\begin{equation*}
 E_0:U_0\times_{H_0}U_0\longrightarrow Y\times Y,
 \qquad
 (u_1,u_2)\longmapsto(e_0(u_1),e_0(u_2))
\end{equation*}
has finite fibers over pairs $(x,y)$ with $x\ne y$.
After restricting to an irreducible component of $H_0$ and pulling back
to a resolution, the two-point evaluation is generically finite onto
its image.
\end{lemma}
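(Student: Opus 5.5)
Fix $(x,y)\in Y\times Y$ with $x\neq y$. The plan is to bound the fiber $E_0^{-1}(x,y)$ by first bounding the parameters it can involve, and then bounding the points above each such parameter.

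\textbf{Step 1: finitely many parameters.} Any point $(u_1,u_2)$ of $E_0^{-1}(x,y)$ lies over a parameter $h\in H_0$ whose image curve $C_h:=e_0(p_0^{-1}(h))$ contains both $x$ and $y$. Set
\[
S:=p_0\bigl(e_0^{-1}(x)\bigr)\cap p_0\bigl(e_0^{-1}(y)\bigr)\subset H_0.
\]
Since $e_0^{-1}(x)$ and $e_0^{-1}(y)$ are closed analytic subsets of $U_0$ and $p_0$ is proper ($H_0$ is compact and the fibers are $\mathbb P^1$), Remmert's theorem shows that $S$ is a compact analytic subset of $H_0$. It therefore suffices to show $\dim S=0$.

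Suppose instead that $S$ contains an irreducible curve $T$. The curves $C_h$ for $h\in T$ then all pass through the two fixed points $x$ and $y$, and are pairwise distinct by hypothesis. This is a positive-dimensional family of rational curves through two fixed distinct points. Distinctness of the images prevents the family from being constant. To get a contradiction, I would pull the $\mathbb P^1$-bundle back to the normalization $\tilde T$ and consider the surface $S':=U_0|_{\tilde T}$, a ruled surface over $\tilde T$ with evaluation map $e:S'\to Y$. Inside $S'$ the preimages of $x$ and $y$ each contain curves $\sigma_x,\sigma_y$ that meet every fiber; when $T$ is not compact, one can first compactify it inside $H_0$, which is compact. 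If $e(S')$ is a curve, then all $C_h$ coincide with it, contradicting the hypothesis that distinct parameters give distinct images. Hence $e$ is generically finite onto a surface and contracts the two disjoint curves $\sigma_x,\sigma_y$. These can be taken disjoint because $x\neq y$, and they are multisections of the ruling.

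\textbf{The main obstacle.} Step 1 is the crux: the classical ``bend-and-break'' rigidity argument. In the projective setting one uses the Hodge index theorem (Mumford--Grauert negativity): a curve contracted by a generically finite map from a compact surface has negative self-intersection. On a ruled surface, two disjoint multisections cannot both have negative self-intersection. To see this, write their classes as $a_if+b_i\sigma$ with $b_i>0$; then $\sigma_x\cdot\sigma_y=0$ together with $\sigma_x^2<0$ and $\sigma_y^2<0$ contradicts the signature $(1,1)$ of the Néron--Severi group of a ruled surface. Grauert's criterion is analytic, so it applies even though $Y$ is not Kähler. This would give $\dim S=0$, so $S$ is finite.

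\textbf{Step 2: finitely many points over each parameter.} For each $h\in S$, the fiber map $\mathbb P^1\to C_h$ is birational, so its preimage of any point is finite. Hence $E_0^{-1}(x,y)\cap p^{-1}(h)$ is finite, and combining this with Step 1, $E_0^{-1}(x,y)$ is finite.

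\textbf{The final assertion.} Restrict to an irreducible component $H'$ of $H_0$ and pull back along a resolution $\tilde H'\to H'$. The fibered product $U'\times_{\tilde H'}U'$ is then irreducible. Its locus lying over the diagonal of $Y\times Y$ is a proper analytic subset, since a generic pair of points on a nonconstant curve is distinct. Off this subset the fibers of the two-point evaluation are finite, by the argument above together with the finiteness of the resolution's fibers over a generic point. Upper semicontinuity of fiber dimension then gives that the two-point evaluation is generically finite onto its image.
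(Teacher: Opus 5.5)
\paragraph{Gap in Step~1.} Your Step~1 breaks at the sentence ``Suppose instead that $S$ contains an irreducible curve $T$.'' Knowing $\dim S\ge 1$ does not give you a compact curve in $S$. The set $S$ is an arbitrary compact analytic subset of an arbitrary compact reduced complex space $H_0$; in the application, $H_0$ is a component of a stable-map space of a possibly non-projective K\"ahler fiber. Positive-dimensional compact complex spaces need not contain any curves, and a generic $2$-dimensional complex torus is the standard example. Nothing in the hypotheses makes $S$ Moishezon, and in general you cannot cut $S$ down by hypersurfaces.

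Your ruled-surface argument needs a compact one-dimensional base, because it runs on intersection numbers of a compact surface. So it says nothing about a component of $S$ of dimension $\ge2$ that contains no curves. What you have actually shown is weaker: no positive-dimensional component of $S$ contains a compact curve (in particular, $S$ has no $1$-dimensional components).

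Within that range the argument is sound. Stein factorization plus Grauert--Mumford gives $\sigma_x^2<0$ and $\sigma_y^2<0$, and a rank-two lattice of signature $(1,1)$ cannot contain two orthogonal negative classes. Your Step~2 and your generic-finiteness argument after passing to a resolution are also correct.

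\paragraph{The missing idea.} You need a rigidity argument that works over a compact base of any dimension. The paper gets one as follows.
\begin{enumerate}
\item It takes $B$ to be a resolution of a positive-dimensional component of $E_0^{-1}(x,y)$ itself, not of its image $S\subset H_0$.
\item The points $u_1,u_2$ then give two honest disjoint sections $s_0,s_\infty$ of the pulled-back bundle, rather than multisections. Hence the bundle is $\mathbb P(\mathcal O_B\oplus L)$.
\item The Taylor coefficients of $e_B$ in the fiber coordinate along $s_0$ (resp.\ $s_\infty$) are global sections of $L^{-k}$ (resp.\ $L^{k}$) with no common zero. This forces $L^m\cong\mathcal O_B$ for some $m>0$.
\item After a finite \'etale cover the bundle becomes $\mathbb P^1\times B'$. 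Near $s_0$, for each fixed fiber coordinate, the target coordinates of $e$ are holomorphic functions on the compact connected manifold $B'$, hence constant.
\item So all fiber maps agree, which contradicts the hypothesis that distinct parameters give distinct image curves.
\end{enumerate}
This uses only compactness of $B$. It needs neither curves in the parameter space nor any intersection theory or Grauert--Mumford negativity.
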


\begin{proof}
\smallskip
\noindent\hypertarget{step-two-points-family}{\textbf{Step 1. A
ruled family from a positive-dimensional fiber.}}
Suppose that $E_0^{-1}(x,y)$ has a positive-dimensional irreducible
component for some $x\ne y$.
Let $B$ be a resolution of its reduction.
Then $B$ is a connected compact complex manifold of positive dimension.
The induced map $B\to H_0$ is nonconstant.
Indeed, a fixed nonconstant map from $\mathbb P^1$ has only finitely
many pairs of preimages of $(x,y)$.

Pull back the universal bundle to obtain $p_B:V\to B$ and $e_B:V\to Y$.
The chosen preimages give disjoint sections $s_0,s_{\infty}$, with
$e_B\circ s_0=x$ and $e_B\circ s_{\infty}=y$.
Trivializations taking the sections to $0$ and $\infty$ have linear
transition maps in the fiber coordinate.
Thus $V\cong\mathbb P(\mathcal O_B\oplus L)$ for a holomorphic line
bundle $L$ on $B$.

\smallskip
\noindent\hypertarget{step-two-points-torsion}{\textbf{Step 2. The
line bundle $L$ is torsion.}}
For the ruled family constructed in \stepref{step-two-points-family}{1},
choose a coordinate neighborhood of $x$ in $Y$.
The map $e_B$ sends a neighborhood of the whole section $s_0(B)$
into this coordinate neighborhood.
Expand its coordinate functions in the fiber coordinate along $s_0$.
The coefficients of order $k\geq1$ are global sections of $L^{-k}$,
after fixing the convention for $L$.
This follows from the linear transition maps of the fiber coordinates.
For each $b\in B$, at least one coefficient is nonzero at $b$.
Otherwise, the fiber map would be constant near $s_0(b)$, and thus
constant on that fiber.
Compactness gives finitely many coefficients with no common zero.
Taking powers to a common degree shows that $L^{-m}$ is generated by
global sections for some $m>0$.

Expansion along $s_{\infty}$ gives the same conclusion for $L^m$,
after replacing $m$ by a common multiple.
Choose sections $u\in H^0(B,L^m)$ and $v\in H^0(B,L^{-m})$ that are
both nonzero at a fixed point of $B$.
Their product is a nonzero holomorphic function on the connected compact
manifold $B$, and thus is a nonzero constant.
Both sections are nowhere zero, so $L^m\cong\mathcal O_B$.

\smallskip
\noindent\hypertarget{step-two-points-contradiction}{\textbf{Step 3.
Trivialization and the identity-theorem contradiction.}}
Using the torsion property obtained in
\stepref{step-two-points-torsion}{2}, take the finite \'etale cover
defined by an $m$th root of a trivialization of $L^m$; it trivializes
$L$.
Choose a connected component $B'$ of this cover.
It maps onto $B$, and the pulled-back bundle is
$\mathbb P^1\times B'$.
By compactness, a fixed small disc about $0\in\mathbb P^1$, times
$B'$, maps into the chosen coordinate neighborhood of $x$.
For a fixed fiber coordinate in that disc, the target coordinates are
holomorphic functions on $B'$, so they are constant.
Thus all fiber maps agree on the disc.
They agree on $\mathbb P^1$ by the identity theorem.
Their images are therefore independent of the parameter.
This contradicts the nonconstant map $B\to H_0$ and the hypothesis
on distinct image curves.

\smallskip
\noindent\hypertarget{step-two-points-conclusion}{\textbf{Step 4.
Finiteness and passage to a resolution.}}
The contradiction in \stepref{step-two-points-contradiction}{3} rules
out every positive-dimensional fiber over a pair of distinct points.
The fibers of $E_0$ over $x\ne y$ are consequently compact and
zero-dimensional, and thus finite.
Their locus is nonempty because the fiber maps are nonconstant.
A resolution of the parameter space is an isomorphism over a dense
open subset, so generic finiteness persists after this pullback.
\end{proof}

\begin{proposition}\label{prop-lefschetz-bound}
Let $M$ be a compact K\"ahler manifold of dimension $n$.
Suppose that $\alpha\in H^2(M,\mathbb R)$ satisfies all hard Lefschetz
isomorphisms
\begin{equation}\label{eq-hard-lefschetz}
 \alpha^{n-j}\smile:
 H^j(M,\mathbb R)\longrightarrow H^{2n-j}(M,\mathbb R),
 \qquad 0\leq j\leq n.
\end{equation}
Let $p:P\to B$ be a holomorphic $\mathbb P^1$-bundle over a connected
compact complex manifold, and let $e:P\to M$ be holomorphic.
Assume that the fiber maps are nonconstant, that
$\int_{p^{-1}(b)}e^*\alpha=0$ for each $b\in B$, and that the two-point
evaluation $P\times_B P\to M\times M$ is generically finite onto its image.
Then $\dim B\leq n-2$.
The class $\alpha$ need not have type $(1,1)$, and $B$ need not be
K\"ahler.
\end{proposition}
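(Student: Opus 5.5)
The plan is to study the two-point incidence cycle in $M\times M$ and show that it is annihilated by a class which, thanks to \eqref{eq-hard-lefschetz}, satisfies hard Lefschetz on $M\times M$. Put $b:=\dim B$ and suppose, for contradiction, that $b\ge n-1$.

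First, the fiber-integral condition lets us descend $e^*\alpha$ to $B$. By Leray--Hirsch for the $\mathbb P^1$-bundle $p$, $H^*(P,\mathbb R)$ is a free $H^*(B,\mathbb R)$-module on $1$ and a class $h$ of fiber degree one. Write $e^*\alpha=c\,h+p^*a$ with $c\in H^0(B)=\mathbb R$ and $a\in H^2(B,\mathbb R)$. Integrating over a fiber gives $c=\int_{p^{-1}(b)}e^*\alpha=0$, so $e^*\alpha=p^*a$. Consider $W:=P\times_BP$, which is compact of dimension $b+2$, and $E:W\to M\times M$. Both $E^*(\alpha\otimes1)$ and $E^*(1\otimes\alpha)$ equal the pullback of $a$ from $B$. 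Hence
\[
E^*D=0,\qquad D:=\alpha\otimes1-1\otimes\alpha\in H^2(M\times M,\mathbb R).
\]
Since $E$ is generically finite onto its image $Z$ (an irreducible compact analytic subvariety of dimension $b+2$), the projection formula gives $E_*[W]=d\,[Z]$ with $d>0$. Therefore $D\smile[Z]=E_*(E^*D)=0$ in $H^{2(2n-b-2)+2}(M\times M)$. Moreover $[Z]\ne0$: pairing it with $(\omega\otimes1+1\otimes\omega)^{b+2}$ for a K\"ahler form $\omega$ on $M$ gives a positive volume. Only $d$-closedness of $\alpha$ is used, so neither a type $(1,1)$ condition nor a K\"ahler metric on $B$ is needed.

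Second, I show that cup product with $D$ is injective on $H^k(M\times M,\mathbb R)$ for $k<2n$. The hard Lefschetz isomorphisms \eqref{eq-hard-lefschetz} for $\alpha$ give, by the standard (Jacobson--Morozov type) construction, an $\mathfrak{sl}_2(\mathbb R)$-action on $H^*(M,\mathbb R)$ with raising operator $L_\alpha$ and weight operator $H=\deg-n$. Replacing $L_\alpha$ by $-L_\alpha$ (and the lowering operator by its negative) gives another $\mathfrak{sl}_2$-action with the same $H$. Tensoring the first action with the second via Künneth yields an $\mathfrak{sl}_2$-action on $H^*(M\times M)$ whose raising operator is $D$ and whose weight operator is $\deg-2n$; signs must be checked on odd-degree classes, but $\alpha$ has even degree. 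By $\mathfrak{sl}_2$ representation theory, the raising operator is injective on negative weight spaces, that is, $D\smile$ is injective in degrees $k<2n$.

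Finally, $\deg[Z]=4n-2b-4$, and this is $<2n$ exactly when $b>n-2$. Injectivity of $D\smile$ together with $D\smile[Z]=0$ then forces $[Z]=0$, a contradiction. Hence $b\le n-2$. I expect the main care to be needed in the second step: producing the $\mathfrak{sl}_2$-action from the bare hard Lefschetz hypothesis for a class that is not K\"ahler, and checking the tensor-product sign conventions for $\alpha\otimes1-1\otimes\alpha$. I would handle this with the purely linear-algebraic statement that a nilpotent operator of degree $2$ satisfying all Lefschetz isomorphisms about the middle degree has primitive decomposition, and that such operators are stable under tensor sums. A secondary point is justifying $E_*[W]=d[Z]$ with $d>0$ when $B$ and $W$ are only complex manifolds or spaces. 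For this I would use integration currents of analytic sets and the fact that a generically finite proper map has a positive generic degree onto its image.
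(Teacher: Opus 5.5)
Your argument is correct, but it takes a different route from the paper.

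\textbf{The paper's route.} The paper works on $M$ itself, using the image $W=e(P)$ of dimension $d$ and setting $r=h+2-d$. It forms the positive closed current $T=(e_2)_*(e_1^*\omega_M^r)=c[W]$ and checks $c>0$ by a rank computation. From $e_1^*\alpha=e_2^*\alpha$, the projection formula, and the fact that $e_1$ has real rank at most $2d$, it gets $\alpha^k\smile[W]=0$ whenever $k+r>d$. For $h\ge n-1$ it takes $k=2d-n$, which contradicts the single Lefschetz isomorphism $\alpha^{2d-n}\colon H^{2(n-d)}(M,\mathbb R)\to H^{2d}(M,\mathbb R)$.

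\textbf{Your route.} You keep the whole incidence cycle $Z\subset M\times M$ and the difference class $D=\alpha\otimes1-1\otimes\alpha$. Since $E^*D=0$, you get $D\smile[Z]=0$ immediately. You then transfer hard Lefschetz to $M\times M$ through the $\mathfrak{sl}_2$-module structure.

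\textbf{What your route buys.} It is more conceptual. It needs no auxiliary current, no bookkeeping with $\dim e(P)$ and $r$, and no rank argument for the vanishing of $e_1^*(\alpha^k\smile[\omega_M]^r)$. The bound $\dim B\le n-2$ appears simply as the statement that $[Z]$ lies strictly below the middle degree $2n$ of $M\times M$. The K\"ahler form is used only to see that $[Z]\neq0$.

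\textbf{What it costs.} You need the linear-algebra input: hard Lefschetz alone yields an $\mathfrak{sl}_2$-triple with weight $\deg-n$, and such structures are stable under tensor products. Because $\alpha$ has even degree, the identification with cup product by $D$ under K\"unneth involves no signs. You also genuinely need hard Lefschetz for $\alpha$ in all degrees, odd ones included, because $[Z]$ may have K\"unneth components in $H^{\mathrm{odd}}(M)\otimes H^{\mathrm{odd}}(M)$. The paper only uses even-degree isomorphisms of the form $H^{2(n-d)}\to H^{2d}$.

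Both uses lie within the stated hypothesis. In the application in Step~6 of the proof of Theorem~\ref{thm-nef-openness}, the transported class satisfies all of these isomorphisms, so either proof serves there.
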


\begin{proof}
Put $h:=\dim B$, $W:=e(P)$, $d:=\dim W$, and $R:=P\times_B P$.
The spaces $P$ and $R$ are connected compact complex manifolds, and
$W$ is an irreducible analytic subset of $M$.
Write $e_1,e_2:R\to M$ for the two evaluation maps.
Set $r:=h+2-d$.
Since $\dim R=h+2$ and the two-point evaluation is generically finite,
$h+2\leq2d$.
Also $d\leq h+1$, so $1\leq r\leq d$.

The class $\tfrac12c_1(T_{P/B})$ has degree one on each fiber of $p$.
The real Leray--Hirsch theorem therefore applies to $p$.
The degree-zero assumption gives $e^*\alpha=p^*\zeta$ for some
$\zeta\in H^2(B,\mathbb R)$.
Pulling back to $R$ yields
\begin{equation}\label{eq-pullback-equality}
 e_1^*\alpha=e_2^*\alpha.
\end{equation}
This is an equality of real cohomology classes.

Choose a K\"ahler form $\omega_M$ on $M$.
The current
$T:=(e_2)_*(e_1^*\omega_M^r)$ is positive and closed.
It has bidimension $(d,d)$ and is supported on $W$.
The support theorem gives $T=c[W]$ for some $c\geq0$;
see~\cite[Chapter III, Corollary 2.14]{Dem12}.
In fact $c>0$, since
\begin{equation*}
 c\int_W\omega_M^d
 =\int_R e_1^*\omega_M^r\wedge e_2^*\omega_M^d>0.
\end{equation*}
To check strict positivity, work where $(e_1,e_2)$ has rank $h+2$
and $e_2$ has rank $d$.
There $e_1$ is injective on the $r$-dimensional kernel of $de_2$.
The integrand is therefore strictly positive there and is nonnegative
on all of $R$.

Let $k\geq0$ be an integer with $k+r>d$.
The projection formula and~\eqref{eq-pullback-equality} give
\begin{equation}\label{eq-incidence-vanishing}
 \begin{aligned}
 c\,\alpha^k\smile[W]
 &= (e_2)_*\bigl(e_2^*\alpha^k\smile e_1^*[\omega_M]^r\bigr)\\
 &= (e_2)_*e_1^*\bigl(\alpha^k\smile[\omega_M]^r\bigr)=0.
 \end{aligned}
\end{equation}
For the last equality, choose any smooth closed real two-form $A$
representing $\alpha$.
The form $A^k\wedge\omega_M^r$ has real degree $2(k+r)$, whereas
the real rank of $e_1$ is at most $2d$.
Its pullback is therefore zero.
This argument does not require $A$ to have type $(1,1)$.
Nor does it require equality between chosen form representatives in~\eqref{eq-pullback-equality}.

Suppose that $h\geq n-1$.
Then $2d\geq n+1$, and $k:=2d-n$ is a positive integer.
Moreover, $k+r-d=h+2-n>0$.
Equation~\eqref{eq-incidence-vanishing} gives
$\alpha^{2d-n}\smile[W]=0$.
But~\eqref{eq-hard-lefschetz}, with $j=2(n-d)$, says that
\begin{equation*}
 \alpha^{2d-n}\smile:
 H^{2(n-d)}(M,\mathbb R)\longrightarrow H^{2d}(M,\mathbb R)
\end{equation*}
is an isomorphism.
The integer $j$ lies between $0$ and $n$ because $d\leq n$ and
$2d\geq n+1$.
Finally, $[W]\ne0$ since $\int_W\omega_M^d>0$.
This is a contradiction.
\end{proof}

\begin{proof}[Proof of Theorem~\ref{thm-nef-openness}]
\smallskip
\noindent\hypertarget{step-nef-setup}{\textbf{Step 1. Reduction and a
vertically positive closed form.}}
Put $n:=\dim X_0$.
The case $n=0$ is immediate.
For $n=1$, the genus is constant in a smooth proper family, and a
canonical bundle on a smooth compact curve is nef exactly when the
genus is at least one.
We assume $n\geq2$.

Choose a K\"ahler form $\omega_0$ on $X_0$ and apply
Lemma~\ref{lem-closed-form}.
Replace $\Delta$ by the resulting disc and $X$ by its preimage.
Thus all fibers are K\"ahler, and $\Omega$ is closed and positive on
vertical complex lines.
This shrinking is fixed throughout the proof.
It does not depend on a curve class or on a parameter chosen later.
Write $\Omega_t:=\Omega|_{X_t}$.

\smallskip
\noindent\hypertarget{step-nef-lefschetz}{\textbf{Step 2. Flat
hard-Lefschetz classes.}}
Using the form constructed in \stepref{step-nef-setup}{1}, for each real
number $\lambda>0$, we consider the classes
\begin{equation*}
 \alpha_{t,\lambda}
 :=c_1(K_{X_t})+\lambda[\Omega_t].
\end{equation*}
They are the restrictions of
$c_1(K_{X/\Delta})+\lambda[\Omega]$.
Thus they form a flat section of $R^2\pi_*\mathbb R$.
At $t=0$, the class
$\alpha_{0,\lambda}=c_1(K_{X_0})+\lambda[\omega_0]$ is K\"ahler.
Hard Lefschetz on $X_0$, followed by parallel transport of the cohomology
ring, shows that $\alpha_{t,\lambda}$ satisfies~\eqref{eq-hard-lefschetz} on $X_t$ for each $t$.
This statement concerns real cohomology only.

\smallskip
\noindent\hypertarget{step-nef-curve}{\textbf{Step 3. A minimal negative
rational curve.}}
Suppose that $F:=X_b$ has non-nef canonical bundle.
Lemma~\ref{lem-negative-curves} gives a $K_F$-negative rational curve.
By Lemma~\ref{lem-minimal-area}, choose one of least $\Omega_b$-area.
Denote it by $C$, and put
\begin{equation*}
 a:=\int_C\Omega_b>0,
 \qquad
 \ell:=-K_F\cdot C\in\mathbb Z_{>0},
 \qquad
 \lambda:=\frac{\ell}{a}.
\end{equation*}
Set $\alpha:=\alpha_{b,\lambda}$.
Then $\alpha\cdot C=0$, and $\alpha$ satisfies hard Lefschetz by
\stepref{step-nef-lefschetz}{2}.

\smallskip
\noindent\hypertarget{step-nef-confinement}{\textbf{Step 4. Confinement
of deformations to the bad fiber.}}
For the curve selected in \stepref{step-nef-curve}{3}, let
$f:\mathbb P^1\to C\subset F$ be the normalization map.
Transport $[C]$ to a class $\beta\in H_2(X_0,\mathbb Z)$.
Apply Lemma~\ref{lem-relative-maps} with $L=K_{X/\Delta}$.
Its hypotheses hold because the degree of $L$ on this class is
$-\ell<0$, while $L|_{X_0}=K_{X_0}$ is nef.
It follows that the reduced local germ of
$\operatorname{Hom}(\mathbb P^1,X)$ at $[f]$ is supported on
maps to $F$.

\smallskip
\noindent\hypertarget{step-nef-lower-bound}{\textbf{Step 5. The lower
bound for the compact curve family.}}
By \stepref{step-nef-confinement}{4}, the entire reduced local germ is
supported on maps to $F$.
We now estimate the dimension of this germ in the total space.
For a holomorphic map from $\mathbb P^1$ to a complex manifold, the
tangent space is $H^0(\mathbb P^1,f^*T_{X})$ and a complete
obstruction space is $H^1(\mathbb P^1,f^*T_{X})$.
Local lifts over a small extension exist because the target is smooth.
Their differences form a \v{C}ech cocycle in $f^*T_{X}$ tensored
with the kernel of that extension.
Its cohomology class vanishes exactly when the lifts can be glued.
The local Kuranishi model is the zero set of a holomorphic map from
a neighborhood of zero in $H^0(\mathbb P^1,f^*T_{X})$
to $H^1(\mathbb P^1,f^*T_{X})$.
Its codimension is at most the dimension of the latter space.
This gives the estimate below; see~\cite[Section 2]{Sie99} and~\cite[Lemma 5.1]{LLR26}.
All these constructions take place near the compact graph of $f$.
They do not require the target to be compact or K\"ahler.

Since $F$ is a smooth fiber over a disc, its normal bundle in
$X$ is trivial.
The exact sequence
\begin{equation*}
 0\longrightarrow T_F\longrightarrow T_{X}|_F
 \longrightarrow\mathcal O_F\longrightarrow0
\end{equation*}
gives $\deg f^*T_{X}=\ell$.
Riemann--Roch on $\mathbb P^1$ therefore yields
\begin{equation*}
 \begin{aligned}
 \dim_{[f]}\operatorname{Hom}(\mathbb P^1,X)
 &\geq h^0(\mathbb P^1,f^*T_{X})
       -h^1(\mathbb P^1,f^*T_{X})\\
 &=n+1+\ell.
 \end{aligned}
\end{equation*}
This is the dimension of the analytic germ, not merely of its tangent
space.
Passing to the reduction does not change this dimension.

Choose a reduced irreducible local branch $G$ of dimension at least
$n+1+\ell$ through $[f]$.
Its universal map factors through $F$.
After shrinking $G$, its maps have class $[C]$ in $F$.
For example, this follows by a homotopy through short geodesics between
sufficiently close maps into $F$.
They define points of the compact space $\mathcal H_{[C]}$ from
Lemma~\ref{lem-minimal-area}.
Thus forgetting the parametrization gives a holomorphic map
$G\to\mathcal H_{[C]}$.
The image lies in an irreducible component $H_0$ after another local
shrinking.
Indeed, a neighborhood of $[f]$ in the target has only finitely many
irreducible components, whose inverse images cover $G$.

All these maps are birational onto their images.
Two represent the same stable map exactly when they differ by an
automorphism of $\mathbb P^1$.
The fibers of $G\to H_0$ therefore have dimension at most
$\dim\operatorname{PGL}_2(\mathbb C)=3$.
The dimension inequality for a holomorphic map gives
\begin{equation}\label{eq-family-lower}
 \dim H_0\geq\dim G-3\geq n+\ell-2\geq n-1.
\end{equation}
This does not require the image of $G$ to be closed.
The component $H_0$ itself is compact because $\mathcal H_{[C]}$ is
compact.

\smallskip
\noindent\hypertarget{step-nef-contradiction}{\textbf{Step 6. The
hard-Lefschetz upper bound and the contradiction.}}
We apply the upper-bound mechanism to the component constructed in
\stepref{step-nef-lower-bound}{5}.
Take a resolution $H\to H_0$ and pull back the universal family.
This gives a holomorphic $\mathbb P^1$-bundle $p:U\to H$ and an
evaluation map $e:U\to F$.
Here $H$ is connected and compact, and $\dim H=\dim H_0$.
Lemma~\ref{lem-two-points} makes the two-point evaluation generically
finite onto its image.
All fiber maps represent $[C]$, so $e^*\alpha$ has degree zero on
each fiber of $p$.
Proposition~\ref{prop-lefschetz-bound} yields $\dim H\leq n-2$.
Together with~\eqref{eq-family-lower}, this gives
$n+\ell-2\leq n-2$, contrary to $\ell>0$.
Thus no such fiber $F$ exists.
\end{proof}

\begin{remark}
The class $\alpha$ in the proof need not be nef or of type $(1,1)$.
It is not asserted to be an adjoint threshold class.
Only its hard Lefschetz property and its zero degree on the selected
curve family are used.
Positivity in Proposition~\ref{prop-lefschetz-bound} comes from a
K\"ahler form on the single fiber $F$.
The closed real form $\Omega$ is used only to control areas and to
transport cohomology classes.
The proof uses the negative-curve theorem of Cao--A. H\"oring together
with Ou's criterion and the alternative proof of Cao--P\u{a}un~\cite{CH20,Ou25,CP25}.
It does not use a base-point-free theorem, an extremal contraction,
or a separate deformation theorem for pseudoeffectivity.
\end{remark}

\begin{remark}[Comparison with~\cite{LLR26} and the role of total-space
K\"ahlerness]
Only the local deformation--obstruction estimate for rational curves
is shared with Section~5 of~\cite{LLR26}; compare~\cite[Lemma~5.1 and Proposition~5.3]{LLR26}. The compactness and
confinement arguments are different. In~\cite{LLR26}, a K\"ahler form
on the total space controls the areas of relative cycles, and the proof
is organized around a nef threshold class and its contraction.

Here both ingredients are avoided. Lemma~\ref{lem-closed-form}
provides a closed real two-form $\Omega$ that is positive only in the
vertical directions; it need not be of type $(1,1)$ or positive on the
total space. Since every stable map lies in a fiber, this vertical
positivity gives the uniform area bound required for Gromov compactness.
Lemma~\ref{lem-relative-maps} and Remmert's theorem then confine the
negative curves away from the central fiber. The final dimension
contradiction uses a transported real hard-Lefschetz class and a
K\"ahler form only on the individual fiber under consideration. Thus,
unlike~\cite{LLR26}, the present local argument requires neither a
total-space K\"ahler form nor a contraction. In particular, it does
not use the projective contraction obtained from the transcendental
base-point-free theorem of Hacon--Xie~\cite{HX26}.
\end{remark}

\section{Extension in one-parameter degenerations}
\label{sec-nef-big}

Regarding the nonsmooth-morphism part of Matsumura's Problem \ref{prop-matsu}, in this section we establish the corresponding extension results by means of a vanishing theorem. The bigness assumption is imposed on the restriction of the relative canonical bundle to a single irreducible component of the central fiber. We then utilize the relative nefness of the canonical bundle to transfer this bigness to the nearby smooth fibers and thus show that the morphism is Moishezon near the central fiber. Finally, we apply Matsumura's relative vanishing theorem via a projective modification to establish the extension result.

Throughout this section, 
for a K\"ahler morphism $g:V\to \Delta$ from
a connected complex manifold to a disk, a holomorphic line bundle $L$ on $V$ is called \emph{$g$-nef} if, for each irreducible component $V_{t,i}$ of each 
$(V_t){\rm red}$, the class $c_1(L|_{V_{t,i}})$ is nef in the sense analogous to Definition \ref{def-nef} (e.g., \cite[Definition 2.1]{DHP24})

We first give an observation on vanishing and extension that does not rely on the local stability of nefness in Theorem \ref{thm-nef-openness}. This observation serves as a model  case for Theorem \ref{thm-nef-big}, where the argument is extended to one-parameter degenerations and the bigness assumption is imposed  on the restriction of the relative canonical bundle to a single irreducible component of the central fiber.  Note that the nefness and bigness of the canonical line bundle are stable under small deformations (\cite[Proposition 3.16]{Cam91}). Thus one can also apply the Kawamata--Viehweg vanishing theorem directly to the nearby fibers of the central fiber, without using the semicontinuity theorem.  

\begin{observation}\label{thm-smooth-nefbig}
Let $\pi:X\to\Delta$ be a smooth proper morphism from a complex
manifold of dimension $n+1$, where $n\geq1$, and assume that $X_0$
is K\"ahler. Suppose that $K_{X_0}$ is nef and big.
Then, for each fixed integer $m\geq2$, there exists a nonempty (analytic Zariski) open subset $\Delta'\ni 0$ of $\Delta$
such that  
\[
R^q\pi_*K_{X/\Delta}^{\otimes m}=0
\qquad\text{for each }q>0
\]
on $\Delta'$.
Moreover, for each fixed integer $m\geq1$, the restriction map
\[
H^0(X,K_{X/\Delta}^{\otimes m})
\longrightarrow
H^0(X_0,mK_{X_0})
\]
is surjective.
\end{observation}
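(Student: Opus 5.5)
The plan is to prove fiberwise vanishing on the nearby fibers and then pass from fibers to direct images and sections by semicontinuity and base change.

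\textbf{Step 1: nearby fibers.} By \cite[Proposition 3.16]{Cam91}, nefness and bigness of the canonical bundle are stable under small deformations. Hence there is a small disk $\Delta'\ni 0$ such that $K_{X_t}$ is nef and big for every $t\in\Delta'$. Theorem~\ref{thm-nef-openness} could also supply nefness here, but it is not needed. For such $t$, the fiber $X_t$ is K\"ahler (after shrinking, by Kodaira--Spencer) and carries a big line bundle, so it is Moishezon. A compact K\"ahler Moishezon manifold is projective, so $X_t$ is projective.

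\textbf{Step 2: fiberwise vanishing.} For $m\geq2$ we write $mK_{X_t}=K_{X_t}+(m-1)K_{X_t}$. The bundle $(m-1)K_{X_t}$ is nef and big, so the Kawamata--Viehweg vanishing theorem on the projective manifold $X_t$ gives
\[
H^q(X_t,mK_{X_t})=0\quad\text{for each } q>0,\ t\in\Delta'.
\]
Here $K_{X/\Delta}^{\otimes m}|_{X_t}=mK_{X_t}$ by adjunction, and $K_{X/\Delta}^{\otimes m}$ is locally free, hence flat over $\Delta$.

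\textbf{Step 3: direct images.} We use Grauert's theorem (or the criterion Theorem~\ref{intro-main-criterion}), applying descending induction on $q$ from $q=n+1$, where $R^{n+1}$ vanishes for dimension reasons. Since $h^q(X_t,mK_{X_t})=0$ is constant on $\Delta'$ for every $q>0$, cohomology and base change shows that $R^q\pi_*K_{X/\Delta}^{\otimes m}$ is locally free of rank $0$, i.e.\ zero on $\Delta'$, for each $q>0$. This proves the first assertion. Then $h^0(X_t,mK_{X_t})=\chi(X_t,mK_{X_t})$ is locally constant in $t$, since the Euler characteristic is deformation invariant in a flat family. By Grauert's theorem, $\pi_*K_{X/\Delta}^{\otimes m}$ is locally free and commutes with base change at $0$, so $H^0(\pi^{-1}(\Delta'),mK_{X/\Delta})\to H^0(X_0,mK_{X_0})$ is surjective, $\Delta'$ being Stein. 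To obtain extension over the whole of $\Delta$ as stated, I would use the restriction sequence $0\to K_{X/\Delta}^{\otimes m}\xrightarrow{t}K_{X/\Delta}^{\otimes m}\to K_{X/\Delta}^{\otimes m}|_{X_0}\to0$ together with the vanishing of the torsion of $R^1\pi_*$ at $0$. Torsion-freeness is the local statement at $0$, and the stalk at $0$ vanishes, so the section near $X_0$ comes from a global section over the Stein base $\Delta$ by coherence. Equivalently, one applies Theorem~\ref{intro-main-criterion}(iv), whose surjectivity concerns the germ at $y=0$.

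\textbf{Step 4: the case $m=1$.} This case is classical. The rank of $\pi_*K_{X/\Delta}$ is $h^{n,0}$, which is constant by Hodge theory on K\"ahler fibers near $0$. Hence base change and surjectivity follow as in Step 3.

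\textbf{Main obstacle.} The delicate point is Step 1: ensuring that Campana's openness result applies with only $X_0$ assumed K\"ahler, that the nearby fibers are K\"ahler and hence projective, and that no semicontinuity subtleties arise at $t\neq0$. I would first try to cite \cite{Cam91} directly together with Kodaira--Spencer stability. If that fails, I would argue by upper semicontinuity of $h^q$ at $0$ using the vanishing on $X_0$ itself, which already gives vanishing on a neighborhood of $0$ and makes Step 1 unnecessary except at $t=0$.
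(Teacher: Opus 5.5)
Your proposal is correct, and it follows the same overall scheme as the paper: Kawamata--Viehweg vanishing, Grauert's base change, absence of $t$-torsion in $R^1\pi_*$ for extension (which is what Theorem~\ref{intro-main-criterion}(iv)--(v) encodes), and constancy of $h^{n,0}$ near a K\"ahler fiber for $m=1$.

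The one real difference is where the vanishing comes from. The paper applies Kawamata--Viehweg only on $X_0$, which is projective because it is K\"ahler and Moishezon. It then propagates $H^q(X_t,mK_{X_t})=0$ to nearby $t$ by Grauert's upper semicontinuity, which is exactly your fallback. Your primary route first shows that $K_{X_t}$ is nef and big for all $t$ near $0$, and then applies Kawamata--Viehweg on each fiber. The paper mentions this alternative in the remark just before the Observation.

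The paper's route is more economical, since it needs no information about nearby fibers. Yours needs openness of nef-and-bigness, which is a genuinely deeper input. One source is the Campana result the paper quotes. Another is Theorem~\ref{thm-nef-openness} for nefness, combined with Kodaira--Spencer, topological invariance of $c_1(K_{X_t})^n$, and the Demailly--P\u{a}un bigness criterion.

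Semicontinuity in the analytic Zariski topology also gives directly the analytic Zariski open $\Delta'\ni 0$ that the statement asks for. Your argument produces only a small disk, so you need one more remark. The set $\operatorname{Supp}R^q\pi_*K_{X/\Delta}^{\otimes m}$ is a closed analytic subset of $\Delta$ disjoint from your disk, hence discrete. Its complement, over the finitely many $q$, is the required $\Delta'$.

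Your explanation of why surjectivity over all of $\Delta$ depends only on the stalk at $0$ is a useful expansion of what the paper compresses into one appeal to Theorem~\ref{intro-main-criterion}. The ingredients are the Stein base, Cartan's Theorem~A, and the absence of $t$-torsion in $R^1\pi_*$ at $0$.
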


\begin{proof}

For any fixed $m\geq 2$, the Kawamata--Viehweg vanishing
theorem  gives
\[
H^q\bigl(X_0,
K_{X_0}\otimes K_{X_0}^{\otimes(m-1)}\bigr)
=
H^q(X_0,mK_{X_0})
=
0
\qquad\text{for each }q>0.
\]
Then Grauert's upper semicontinuity theorem and Theorem \ref{intro-main-criterion}
 imply that $R^q \pi_* K_{X / \Delta}^{\otimes m}$ is locally free of rank zero on a nonempty (analytic Zariski) open subset $\Delta'\subseteq \Delta$. Thus  for any $m\geq 2$, we have
$$
R^q \pi_* K_{X / \Delta}^{\otimes m}=0 \quad \text { for each } q>0 
$$
on $\Delta'$.
 In particular, $\pi_* K_{X / \Delta}^{\otimes m}$ is torsion free and thus Theorem \ref{intro-main-criterion} gives rise to 
the surjection 
\[
H^0(X,K_{X/\Delta}^{\otimes m})
\longrightarrow
H^0(X_0,mK_{X_0})
\]
for any $m\geq 2$.    

The surjectivity in the case $m=1$ requires neither the nefness nor the bigness assumption, and follows from the local constancy of the Hodge numbers near  a K\"ahler fiber and Theorem \ref{intro-main-criterion}.  
\end{proof}

For the nonsmooth case of  Matsumura's Problem \ref{prop-matsu}, we replace the fiberwise Kawamata--Viehweg vanishing and semicontinuity argument above by the Boucksom--Demailly--P\u{a}un criterion for bigness, followed by Matsumura's relative vanishing theorem on a suitable projective modification.

\begin{theorem}\label{thm-nef-big}
Let $\pi:X\to \Delta$ be a proper surjective K\"ahler morphism with connected fibers
from a connected complex manifold of dimension $n+1$, where $n\geq1$.
Suppose that the central fiber $X_0:=\pi^{-1}(0)$ is a 
 simple normal crossing divisor and that $K_X$ is
$\pi$-nef. Write $X_0=\sum_{i=1}^aD_i$.
Assume that $K_{X/\Delta}|_{D_i}$ is big for at least one index $i$.
Then the following statements hold.
\begin{enumerate}[(i)]
\item\label{item-nef-big-vanishing}
For each fixed integer $m\geq2$, after shrinking the disk 
centered at $0$, we obtain that  
\begin{equation*}
R^q(\pi)_*K_{X/\Delta}^{\otimes m}=0
\qquad\text{for each }q>0.
\end{equation*}
\item\label{item-nef-big-extension}
For each fixed integer $m\geq1$, after shrinking the disk 
centered at $0$, we obtain that  
the restriction map
\begin{equation*}
H^0(X,K_{X/\Delta}^{\otimes m})
\longrightarrow
H^0(X_0,K_{X/\Delta}^{\otimes m}|_{X_0})
\end{equation*}
is surjective. 
\end{enumerate}
\end{theorem}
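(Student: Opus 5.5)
The argument has three parts. First, bigness is propagated from the component $D_i$ to the nearby smooth fibers. Second, Matsumura's relative Kawamata--Viehweg vanishing (Theorem~\ref{thm-relative-kv-vanishing}) is applied on a projective modification and descended to $X$. Third, extension follows from the restriction exact sequence.

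\textbf{Step 1: bigness on nearby fibers.} Let $\Omega$ be the relatively K\"ahler form. Since $K_X$ is $\pi$-nef, for every $t\neq0$ near $0$ the class $c_1(K_{X_t})$ is nef. By Demailly--P\u{a}un, on compact K\"ahler manifolds a nef class $\alpha$ is big if and only if $\int\alpha^n>0$; the Boucksom--Demailly--P\u{a}un criterion is the algebraic counterpart. It therefore suffices to show $\int_{X_t}c_1(K_{X/\Delta})^n>0$.

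The integral is computed on the closed fibers. The number $\int_{X_t}c_1(K_{X/\Delta})^n$ is independent of $t$, because $X_t$ and $X_0=\sum D_i$ are homologous as cycles, being fibers of a proper map to a disk. Hence
\[
\int_{X_t}c_1(K_{X/\Delta})^n=\sum_j\int_{D_j}c_1(K_{X/\Delta}|_{D_j})^n .
\]
Each summand is nonnegative, because $K_X|_{D_j}$ is nef on the compact K\"ahler manifold $D_j$. The $i$-th summand is positive, because $K_{X/\Delta}|_{D_i}$ is nef and big. Therefore $K_{X_t}$ is nef and big for all $t$ near $0$, including generic $t$, and each such $X_t$ is Moishezon and K\"ahler, hence projective.

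\textbf{Step 2: vanishing of higher direct images.} The aim is a proper bimeromorphic modification $\mu:\widetilde X\to X$, isomorphic outside a proper analytic subset and with $\widetilde X$ projective over a shrunken disk. One route is to show that $\pi$ is Moishezon near $X_0$ and then take a relative Moishezon resolution. $\pi$-bigness of $K_X$, in the sense that $\pi_*\mathcal O(kK_X)$ has rank growing like $k^n$, follows from Step 1 and upper semicontinuity. The sections of $\pi_*\mathcal O(kK_X)$ define a relative meromorphic map, and resolving it gives the modification.

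On $\widetilde X$, write
\[
\mu^*(mK_{X})=K_{\widetilde X}+\mu^*\bigl((m-1)K_X\bigr)-E,
\]
where $E=K_{\widetilde X/X}$ is effective and $\mu$-exceptional. The line bundle $\mu^*((m-1)K_X)$ is $\tilde\pi$-nef and $\tilde\pi$-big. Since $\tilde\pi$ is projective, Theorem~\ref{thm-relative-kv-vanishing} applies and gives
\[
R^q\tilde\pi_*\bigl(K_{\widetilde X}\otimes\mu^*K_X^{\otimes(m-1)}\bigr)=0\qquad(q>0).
\]
To descend, use $\mu_*K_{\widetilde X}=K_X$ and $R^q\mu_*K_{\widetilde X}=0$ (Grauert--Riemenschneider). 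These give $R\mu_*\bigl(K_{\widetilde X}\otimes\mu^*K_X^{m-1}\bigr)=K_X^{m}$ by the projection formula. The Leray spectral sequence then yields $R^q\pi_*K_{X/\Delta}^{m}=0$ for $q>0$ after shrinking the disk, which is~(i).

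\textbf{Step 3: extension.} Consider the exact sequence
\[
0\to K_X^m\xrightarrow{\cdot t}K_X^m\to K_X^m|_{X_0}\to0
\]
and push it forward by $\pi$. By~(i), $R^1\pi_*K_X^m=0$, so $\pi_*K_X^m\to H^0(X_0,K_X^m|_{X_0})$ is surjective. Since the disk is Stein, this is surjectivity on $H^0(X,\cdot)$, which proves~(ii) for $m\ge2$.

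For $m=1$, the higher direct images $R^q\pi_*K_{X/\Delta}$ are torsion free by Takegoshi's theorem for K\"ahler morphisms. The same exact sequence then gives surjectivity: torsion freeness of $R^1\pi_*K_X$ makes multiplication by $t$ injective on it.

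The main obstacle is Step 2, specifically producing the projective-over-$\Delta$ modification and checking that $\mu^*K_X$ stays relatively big on $\widetilde X$ near the singular central fiber. I would try Moishezon-ness via the rank of $\pi_*(kK_X)$ first, and fall back to Step 1 over the punctured disk together with properness if that route fails.
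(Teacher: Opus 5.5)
Your outline follows the paper's proof. Step~1 uses the same homology computation $\int_{X_t}c_1(K_{X/\Delta})^n=\sum_j\int_{D_j}c_1(K_{X/\Delta}|_{D_j})^n>0$ together with Demailly--P\u{a}un. Step~2 uses the same projective modification, Matsumura's vanishing, and Grauert--Riemenschneider plus Leray descent. Step~3 uses Takegoshi's torsion-freeness for $m=1$, and your direct multiplication-by-$t$ sequence is a fine substitute for the paper's appeal to Theorem~\ref{intro-main-criterion}. However, Step~2 asserts two things without the input they need.

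\textbf{Main gap.} Theorem~\ref{thm-relative-kv-vanishing} does not take ``$\tilde\pi$-nef and $\tilde\pi$-big'' as its hypothesis. It needs a singular metric $h$ on $F=\mu^*K_X^{\otimes(m-1)}$ with semipositive curvature on the whole total space $\widetilde X$, and its conclusion is about $K_{\widetilde X}\otimes F\otimes\mathcal I(h)$. Relative nefness is a fiberwise condition. It gives no such global metric, let alone one with $\mathcal I(h)=\mathcal O_{\widetilde X}$ that is big on a smooth fiber. The paper's Lemma~\ref{lem-adjoint-vanishing} supplies this as follows.
\begin{enumerate}[(a)]
\item Fix a $\tilde\pi$-ample $A$. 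Bigness on a fiber where base change holds, plus Cartan's Theorem~A, gives $F^{\otimes r}\cong A\otimes\mathcal O(E)$ with $E$ effective. This yields $h_{\rm big}$ with curvature $\frac1r(\omega_A+[E])$.
\item Since $F^{\otimes k}\otimes A$ is relatively ample, after shrinking it gives smooth metrics $h_k$ on $F$ with curvature $\ge-\omega_A/k$.
\item Take $h=h_k^{1-\delta}h_{\rm big}^{\delta}$, with $\delta$ small enough that $\frac{\delta}{r}E$ has trivial multiplier ideal and $k>r(1-\delta)/\delta$. Then $h$ has strictly positive curvature and $\mathcal I(h)=\mathcal O$, and it is big on a smooth fiber not contained in $E$.
\end{enumerate}
Alternatively, you could cite a relative Kawamata--Viehweg theorem for projective analytic morphisms stated for nef and big bundles, such as Nakayama's, but not Theorem~\ref{thm-relative-kv-vanishing} as it stands. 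Your worry about bigness of $\mu^*K_X$ near the singular central fiber is unnecessary: bigness on one smooth fiber suffices.

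\textbf{Smaller gap.} The construction of $\widetilde X$ has two missing points.
\begin{enumerate}[(a)]
\item Rank growth of $\pi_*\mathcal O(kK_X)$ alone does not make the relative map bimeromorphic. You need a single $k$ with $|kK_{X_t}|$ bimeromorphic for many $t$. The paper obtains this from countably many $k$ versus uncountably many $t$.
\item Resolving the indeterminacy gives a bimeromorphic morphism $\widetilde X\to V'$ onto an image $V'$ that is projective over $\Delta$. But this morphism need not be projective, since in the analytic category a modification of a projective space need not be projective. One needs Hironaka's Chow lemma to pass to a further modification that is projective over $V'$, hence over a shrunken $\Delta$, and then a projective resolution. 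This is the content of Lemma~\ref{lem-projective-model}.
\end{enumerate}
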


\begin{remark}
No bigness assumption is imposed on the other components or on
the intersection strata.
The restriction $K_{X/\Delta}|_{D_i}$ in this theorem is the adjoint bundle
$K_{D_i}\otimes\mathcal O_{D_i}(B_i)$, where
$B_i:=\sum_{j\neq i}(D_j|_{D_i})$ is the reduced boundary divisor.
The bigness or nefness of $K_{D_i}$ does not follow. A simple
counterexample is provided by a stable degeneration of curves. Let
\[
  D_i \simeq \mathbb{P}^1
\]
meet the other components of the central fiber in three nodes. Then
\[
  B_i=p_1+p_2+p_3,
  \qquad
  K_{X/\Delta}|_{D_i}\simeq K_{\mathbb{P}^1}+B_i
  \simeq \mathcal{O}_{\mathbb{P}^1}(1),
\]
which is ample, hence big and nef. Nevertheless,
\[
  K_{D_i}\simeq\mathcal{O}_{\mathbb{P}^1}(-2)
\]
is neither nef nor big. Such a configuration occurs in a semistable
family of stable curves in which the relative canonical bundle \(K_{X/\Delta}\)
is relatively ample.

\end{remark}

The following variation of Chow's Lemma is needed to lift a morphism to a projective morphism.

\begin{lemma}\label{lem-projective-model}
Let $p:V\to \Delta$ be a proper surjective holomorphic map from
a connected complex manifold, and let $B$ be a line bundle on $V$.
Suppose that there is a nonempty open subset $\Delta^\circ\subset \Delta$
over which $p$ is smooth with connected fibers, and that
$B|_{V_t}$ is big for each $t\in \Delta^\circ$.
After shrinking $\Delta$ around $0$, there exist a smooth complex
manifold $Y$ and a proper modification $\mu:Y\to V$ such that
$p\circ\mu:Y\to \Delta$ is projective. The modification $\mu$ is
itself projective.
\end{lemma}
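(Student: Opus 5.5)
We make $p$ locally Moishezon over the base and then apply a relative Chow lemma to the resulting projective algebraic-space structure.

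\textbf{Step 1: a relative Kodaira map.} Fix $t_1\in\Delta^\circ$. Since $B|_{V_{t_1}}$ is big, there is $k\ge1$ with
\[
\kappa(V_{t_1},B^{\otimes k}|_{V_{t_1}})=\dim V_{t_1},
\]
so the fiberwise Kodaira map of $H^0(V_{t_1},kB|_{V_{t_1}})$ is bimeromorphic onto its image. We need sections defined over the whole family. Shrink $\Delta$ to a relatively compact disk, which is Stein. Then $p_*\mathcal O_V(kB)$ is coherent by Grauert's direct image theorem. Grauert's base change theorem shows that the restriction map
\[
(p_*\mathcal O_V(kB))_t\otimes\mathbb C(t)\longrightarrow H^0(V_t,kB|_{V_t})
\]
is surjective for $t$ outside a discrete set $S$. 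The function $t\mapsto h^0(V_t,kB)$ is generically constant; if needed, we replace $t_1$ by a nearby point of $\Delta^\circ\setminus S$, using that bigness is stated for every $t\in\Delta^\circ$. By Cartan's Theorem A on the Stein base, finitely many global sections $\sigma_0,\dots,\sigma_N\in H^0(V,kB)$ generate $p_*\mathcal O_V(kB)$ over the shrunken disk. Their restrictions span $H^0(V_t,kB)$ for $t\notin S$. They define a meromorphic map
\[
\Phi:V\dashrightarrow \mathbb P^N\times\Delta
\]
over $\Delta$, which is bimeromorphic onto its image $Z$ on $V_t$ for generic $t$. Because $V$ is irreducible (it is connected and smooth), the graph closure $\Gamma\subset V\times_\Delta(\mathbb P^N\times\Delta)$ is irreducible. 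Its projection $\Gamma\to V$ is a proper modification. The projection $\Gamma\to Z$ is bimeromorphic, since it is generically one-to-one on the generic fiber and dimensions match.

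\textbf{Step 2: resolving onto a projective space.} The image $Z\subset\mathbb P^N\times\Delta$ is projective over $\Delta$. Resolving the indeterminacy of $\Phi$ by a sequence of blow-ups of $V$ along smooth centers gives $\nu:V'\to V$ projective with $\Phi\circ\nu$ holomorphic. This uses Hironaka's resolution, which is valid after shrinking to a relatively compact region of the base. However, $V'$ need not yet be projective over $\Delta$.

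\textbf{Step 3: flattening and the Chow-lemma comparison.} Apply Hironaka's flattening to the bimeromorphic morphism $V'\to Z$. This yields a sequence of blow-ups $Z'\to Z$ with smooth or coherent centers, each projective over $\Delta$, such that the strict transform $V''$ of $V'$ maps isomorphically onto $Z'$. The point is that a flat bimeromorphic proper map between reduced irreducible spaces is an isomorphism. Hence $V''\cong Z'$ is projective over $\Delta$. Finally, take a resolution $Y\to V''$ by blow-ups, again projective, so $Y$ is smooth and projective over $\Delta$.

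The composite $\mu:Y\to V''\to V'\to V$ is a proper modification. Projectivity of $\mu$ follows because $Y$ is projective over $\Delta$, hence over $V$. Indeed, a relatively ample bundle $A$ over $\Delta$ restricts to an ample bundle on each fiber of $\mu$, and properness of $\mu$ gives $\mu$-ampleness. Equivalently, $Y\hookrightarrow \mathbb P^M\times V$ via $(\iota,\mu)$ is a closed embedding.

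\textbf{Main obstacle.} I expect the main obstacle to be Step 3. The strict transform $V''$ must be isomorphic to $Z'$ globally over the shrunken disk, not just generically. This requires flattening in the analytic category (Hironaka's relative flattening over a relatively compact base). It also requires checking that shrinking $\Delta$ around $0$ is compatible with all the finiteness needed: finitely many blow-up centers, and finitely many generating sections. Applying Hironaka's analytic Chow lemma directly to $\Gamma\to Z$ is the first thing I would try, since it packages exactly this step.
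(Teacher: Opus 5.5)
Your architecture matches the paper's: a relative Kodaira map built from Grauert base change and Cartan's Theorem~A, followed by a Chow-lemma step. Your Steps 2--3 are a correct unpacking of the Chow lemma that the paper only cites: flatten onto $Z$, use that a proper flat bimeromorphic morphism of reduced irreducible spaces is an isomorphism, then resolve.

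\textbf{The gap is in Step 1.} There you infer that $\Phi|_{V_t}$ is bimeromorphic for generic $t$, and hence that $\Gamma\to Z$ is bimeromorphic, from an exponent $k$ chosen at the single fiber $V_{t_1}$. Your repair ``replace $t_1$ by a nearby point of $\Delta^\circ\setminus S$'' is circular: $S$ depends on $k$, and bigness at the new point only yields some other exponent, with its own exceptional set.

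Worse, even when $t_1\notin S$ the inference is false. The degree of $\Phi|_{V_t}$ onto its image can drop on special fibers where the complete linear system acquires a fixed part. Here is a counterexample.
\begin{itemize}
\item Let $V_t$ be the blow-up of $\mathbb P^2$ at seven points that are general for $t\neq0$, with $p_1(0),\dots,p_4(0)$ on a line $\ell$. Write $H$ for the line class, $E_i$ for the exceptional curves, and $\tilde\ell$ for the strict transform of $\ell$.
\item Take $B=-K_{V/\Delta}$, $k=1$ and $t_1=0$.
\item Then $h^0(-K_{V_t})=3$ for every $t$, so base change holds everywhere and $S=\emptyset$.
\item On the special fiber, $|-K_{V_0}|=\tilde\ell+|2H-E_5-E_6-E_7|$, whose moving part is a quadratic Cremona map and hence birational.
\item For $t\neq0$, $|-K_{V_t}|$ is the anticanonical double cover of a degree-two del Pezzo surface onto $\mathbb P^2$.
\end{itemize}
So $\Gamma\to Z$ has degree $2$, and your Step 3 produces a finite flat double cover $V''\to Z'$, not an isomorphism.

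\textbf{The missing ingredient is the paper's countability argument.} Each $t\in\Delta^\circ$ has some $m_t$ with $|m_tB|_{V_t}|$ bimeromorphic. Since $\Delta^\circ$ is uncountable, a single $m$ works for uncountably many $t$. For that $m$, two sets are discrete, hence countable: the locus where base change fails, and the set of $t$ at which the degree of $\Phi|_{V_t}$ onto its image differs from the generic degree of $\Gamma\to Z$. A good $t$ outside both sets forces $\deg(\Gamma\to Z)=1$, and then your Steps 2--3 go through.

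Alternatively, Step 3 only needs $\Gamma\to Z$ to be generically finite. In that case the flattened strict transform is finite over $Z'$, and finite morphisms are projective. Generic finiteness does follow from a single $t_1\notin S$, since $\Phi(V_{t_1})$ is then an $n$-dimensional subset of the proper analytic subset $Z_{t_1}$ of the irreducible space $Z$. But the case $t_1\in S$ still needs the uniform choice of exponent.

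Finally, $\kappa(V_{t_1},B^{\otimes k}|_{V_{t_1}})=\dim V_{t_1}$ holds for every $k$ once $B|_{V_{t_1}}$ is big. What you actually need is that the specific system $|kB|_{V_{t_1}}|$ is bimeromorphic.
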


\begin{proof}
For each $t\in\Delta^\circ$, the bigness of $B|_{V_t}$ implies that
$B^{\otimes m_t}|_{V_t}$ is very big for some $m_t \geq 1$.  Consequently,  there is an integer
$m\geq 1$ such that the complete linear system $|B^{\otimes m}|_{V_t}|$
gives  a bimeromorphic map for uncountably many
$t\in\Delta^\circ$.

By Grauert's generic cohomology and base change, there is a proper 
analytic subset $T\subset\Delta$ such that
\[
 (p_*B^{\otimes m})\otimes_{\mathcal O_\Delta}\mathbb C(t)
 \cong
 H^0(V_t,B^{\otimes m}|_{V_t})
\]
for each $t\in\Delta\setminus T$.
Since $\Delta$ is Stein and $p_*B^{\otimes m}$ is coherent,
Cartan's theorem A provides finitely many global sections of
$p_*B^{\otimes m}$ whose germs generate its stalk at $0$. Consider the support of the  cokernel of the resulting morphism
$$
\mathcal{O}_{\Delta}^{\oplus(N+1)} \rightarrow p_* B^{\otimes m}.
$$
Then we obtain that after shrinking $\Delta$, these sections generate
$p_*B^{\otimes m}$ over $\Delta$.  Equivalently, their corresponding
sections
\[
 s_0,\ldots,s_N\in H^0(V,B^{\otimes m})
\]
restrict, for each $t\in\Delta\setminus T$, to a spanning set of $H^0(V_t,B^{\otimes m}|_{V_t}).$
They therefore define a relative meromorphic map
\[
\Phi:V\dashrightarrow\mathbb P^N\times\Delta
\]
whose restriction to $V_t$, for $t\notin T$, is induced by the complete linear system $|B^{\otimes m}|_{V_t}|$. Thus $\Phi|_{V_t}$ is bimeromorphic onto its image for uncountably many $t$.

Let $V'\subset\mathbb P^N\times\Delta$ be the image of $\Phi$.
Since $\Phi$ is bimeromorphic on uncountably many fibers, it is
generically one-to-one, and hence
\[
 \Phi:V\dashrightarrow V'
\]
is bimeromorphic. This is exactly the so-called bimeromorphic embedding developed in \cite{RT1,RT22}. Moreover, $V'\to\Delta$ is projective.   The remaining part of this lemma follows  from a standard Chow's Lemma (e.g., \cite[Corollary 2]{Hi75}  or \cite[Corollary 2.9]{Pt94}) argument (e.g., \cite[Lemma 2.20]{CRT25}). 
\end{proof}

The following relative vanishing theorem of Matsumura for projective morphisms plays an important role in the proof of Theorem~\ref{thm-nef-big}.

\begin{theorem}[{\cite[Theorem 1.7]{Mat22}}]\label{thm-relative-kv-vanishing}
Let \(\pi\colon X \to \Delta\) be a surjective projective morphism from
a complex manifold \(X\) to an analytic space \(\Delta\), and let
\((F,h)\) be a possibly singular Hermitian line bundle on \(X\) with
semipositive curvature.

Then we have
\[
  R^q\pi_*\bigl(K_X \otimes F \otimes \mathcal{I}(h)\bigr)=0
\]
for every
\[
  q >
  f-\max_{\substack{t\in\Delta\\
                    \pi\,\text{is smooth at}\,t}}
  \operatorname{nd}\bigl(F|_{X_t},h|_{X_t}\bigr),
\]
where \(f\) is the dimension of the general fibers and $\operatorname{nd}(\bullet, \bullet
)$ is the numerical Kodaira dimension (\cite[Definition 4.1]{Mat22}, \cite[\S 3, 4]{Ca14}). In particular, if
\(\bigl(F|_{X_t},h|_{X_t}\bigr)\) is big for some point \(t\) in the
smooth locus of \(\pi\), then
\[
  R^q\pi_*\bigl(K_X \otimes F \otimes \mathcal{I}(h)\bigr)=0
  \qquad\text{for every } q>0.
\]
\end{theorem}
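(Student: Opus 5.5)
The statement is Matsumura's relative vanishing theorem, which we take from \cite[Theorem 1.7]{Mat22}. If I were to reprove it, I would use the standard two-step scheme: first show that the higher direct images are torsion free, then show that they vanish at a general point of the base. The problem is local on $\Delta$, so throughout I would replace $\Delta$ by a small Stein open set and $X$ by its preimage. Note also that for $q>f$ the sheaves $R^q\pi_*$ vanish anyway, because the fibers have dimension at most $f$.

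\textbf{Step 1 (torsion freeness).} Since $\pi$ is projective, fix a $\pi$-ample line bundle $A$ and work over a relatively compact Stein open set in $\Delta$. I would invoke the injectivity theorem with multiplier ideals, in Matsumura's form for semipositively curved singular metrics, which is the analytic counterpart of Koll\'ar's theorem. Applied to a nonzero holomorphic function $g$ on the base, it shows that multiplication by $\pi^*g$ is injective on $H^q\bigl(\pi^{-1}(U),K_X\otimes F\otimes\mathcal I(h)\bigr)$. Equivalently, $R^q\pi_*\bigl(K_X\otimes F\otimes\mathcal I(h)\bigr)$ is torsion free; this is a Takegoshi-type statement.

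The key technical input is the injectivity theorem in the singular setting. Its proof is a harmonic-representative argument: one uses $L^2$ methods on the Zariski-open Stein (hence complete K\"ahler) set $\pi^{-1}(U)\setminus\{$a divisor$\}$, with the singular metric approximated by metrics with analytic singularities, and then passes to the limit while controlling the multiplier ideals. I expect this step to be the main obstacle, since the equisingular approximation and the convergence of the harmonic forms are delicate.

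\textbf{Step 2 (generic vanishing).} Torsion freeness reduces the theorem to showing that the stalks vanish at a general point of the smooth locus. I would proceed in three parts.
\begin{enumerate}[(a)]
\item Over a general $t$, the base change map identifies the fiber of $R^q\pi_*$ with $H^q\bigl(X_t,K_{X_t}\otimes F|_{X_t}\otimes\mathcal I(h)|_{X_t}\bigr)$. This uses Grauert's theorem on a Zariski-open set where all the sheaves involved are flat.
\item For a very general $t$, one has $\mathcal I(h)|_{X_t}=\mathcal I(h|_{X_t})$. One inclusion is Ohsawa--Takegoshi. The other holds off a countable union of proper analytic subsets, by a generic restriction argument for multiplier ideals.
\item On the compact K\"ahler fiber $X_t$, the Kawamata--Viehweg--Nadel vanishing theorem of Cao \cite{Ca14} gives $H^q=0$ for $q>f-\operatorname{nd}(F|_{X_t},h|_{X_t})$.
\end{enumerate}

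\textbf{Matching the numerical dimension.} It remains to reconcile the maximum of $\operatorname{nd}$ over smooth points with its value at the very general point used above. For this I would show that $\operatorname{nd}(F|_{X_t},h|_{X_t})$ is constant for very general $t$ and that this constant is its maximum. The method would be to compare the top powers of the non-pluripolar products along fibers, using a family version of Cao's characterization. A torsion-free sheaf vanishing at a general point is zero, which concludes. The ``in particular'' clause is then immediate: bigness of $(F|_{X_t},h|_{X_t})$ means $\operatorname{nd}=f$, so the bound reads $q>0$.
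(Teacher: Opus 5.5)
Your first sentence is exactly what the paper does: Theorem~\ref{thm-relative-kv-vanishing} is quoted from \cite[Theorem 1.7]{Mat22} and not reproved. Your reproof sketch has a reasonable skeleton: torsion-freeness from the injectivity theorem with $M=\mathcal O_X$ and $s=\pi^*g$, then generic vanishing via base change, restriction of multiplier ideals, and Cao's theorem on a fibre. As written, though, it proves vanishing only for $q>f-\operatorname{nd}$ \emph{at a general fibre}. The step that upgrades this to the maximum over all smooth points is a genuine gap.

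\textbf{The maximum over smooth points.} A priori the maximum may be attained only at special points $t_0$, and there you cannot invoke Cao's theorem. Ohsawa--Takegoshi only gives $\mathcal I(h|_{X_{t_0}})\subseteq\mathcal I(h)\cdot\mathcal O_{X_{t_0}}$. That is the wrong inclusion for transferring vanishing to $H^q(X_{t_0},K_{X_{t_0}}\otimes F\otimes\mathcal I(h)\cdot\mathcal O_{X_{t_0}})$. Moreover, $\mathcal O_X/\mathcal I(h)$ need not be flat at $t_0$.

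So you really need $\operatorname{nd}(F|_{X_{t_0}},h|_{X_{t_0}})\le\operatorname{nd}(F|_{X_t},h|_{X_t})$ for almost every $t$. ``Comparing non-pluripolar products along fibres'' does not give this by itself, because non-pluripolar masses are not lower semicontinuous under weak limits. For example, weights of $\mathcal O(1)$ on $\mathbb P^1$ whose curvatures are averages of $j$ Dirac masses, equidistributing to the Fubini--Study measure, all have mass $0$, while the limit has mass $1$. That is exactly the direction you need.

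The missing input must come from plurisubharmonicity on the total space. One route:
\begin{enumerate}
\item Take Demailly's approximation $\Phi_m$ of the weight $\varphi$ on (a relatively compact part of) $X$, with singularities $\frac1m\log|\mathcal I(m\varphi)|$.
\item By Ohsawa--Takegoshi, $\mathcal I(m\varphi|_{X_t})\subseteq\mathcal I(m\varphi)\cdot\mathcal O_{X_t}$ for every $t$. So the fibrewise approximations at $t_0$ are more singular than $\Phi_m|_{X_{t_0}}$, hence have smaller twisted masses.
\item By Fubini and coherence, equality holds for all $m$ at almost every $t$. At such $t$, $\Phi_m|_{X_t}$ computes the fibrewise $\operatorname{nd}$.
\item Work over a one-dimensional base; the general case reduces to this by restricting to general lines through $t_0$. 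On a log resolution $\mu_m$ of $\mathcal I(m\varphi)$, the twisted masses of $\Phi_m|_{X_t}$ are integrals of one fixed closed semipositive form over the strict transform of $X_t$.
\item For special $t_0$, this strict transform is only one component of the cycle $\mu_m^*X_{t_0}$, which is homologous to the general fibre. Hence the masses, and therefore $\operatorname{nd}$, can only drop at special fibres.
\end{enumerate}
Without an argument of this kind, your proof gives the theorem only with the general-fibre value of $\operatorname{nd}$ in place of the maximum. That weaker version would still suffice for the paper's use in Lemma~\ref{lem-adjoint-vanishing}.

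\textbf{Two smaller corrections.} First, in Step~2(b) the exceptional set is Lebesgue-null, not a countable union of proper analytic subsets. Take $\mathbb P^1\times\Delta$ with $F=\mathcal O(1)$ and weight $\log(|z_0|^2+e^{u(t)}|z_1|^2)$, where $u$ is subharmonic and $\equiv-\infty$ on an uncountable compact polar set $P$.
\begin{itemize}
\item One has $\mathcal I(h)=\mathcal O_X$, because $\int_{|z|<1}(|z|^2+e^{u})^{-1}\,d\lambda=\pi\log(1+e^{-u})$ is locally integrable in $t$.
\item For every $t\in P$ the restricted weight is $\log|z_0|^2$. So $\mathcal I(h|_{X_t})\ne\mathcal O_{X_t}$, and $\operatorname{nd}$ drops from $1$ to $0$.
\end{itemize}
Your ``very general'' must therefore be read in the measure sense. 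This does not hurt the argument, since one good point in the base-change locus suffices.

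Second, fibres of a projective morphism can have dimension $>f$. For instance, blowing up a point of $\mathbb C^2$ gives $f=0$ but one fibre is $\mathbb P^1$. So vanishing for $q>f$ is not automatic, although it does follow from your torsion-freeness plus generic vanishing.
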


\begin{lemma}\label{lem-adjoint-vanishing}
Let $f:Y\to \Delta$ be a proper surjective projective
holomorphic map from a connected complex manifold, and let $F$
be an $f$-nef line bundle. Suppose that there is a nonempty
open subset $\Delta^\circ\subset \Delta$ over which $f$ is smooth with
connected fibers and that $F|_{Y_t}$ is big for each $t\in \Delta^\circ$.
After shrinking $\Delta$ around $0$, one has
\[
R^qf_*(K_{Y/\Delta}\otimes F)=0
\qquad\text{for each }q>0.
\]
\end{lemma}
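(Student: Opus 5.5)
The plan is to apply Matsumura's relative vanishing theorem (Theorem~\ref{thm-relative-kv-vanishing}) to the projective morphism $f:Y\to\Delta$ and the line bundle $F$. The metric on $F$ should have semipositive curvature and trivial multiplier ideal. Since $\Delta$ is a disk, $K_\Delta$ is trivial: the coordinate $t$ gives $K_{Y/\Delta}\cong K_Y$. It therefore suffices to show $R^qf_*(K_Y\otimes F)=0$ for $q>0$ after shrinking $\Delta$.

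\textbf{Step 1: a metric via relative nefness.} Nefness of $F$ alone does not provide a semipositively curved metric on $F$. So I would use the standard perturbation argument available for projective morphisms. After shrinking $\Delta$, relative projectivity gives an $f$-ample line bundle $A$ on $Y$. By replacing $A$ with $A\otimes f^*\mathcal O$, we may assume $A$ carries a smooth metric with positive curvature on a neighbourhood of $f^{-1}(\overline{\Delta'})$.

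For each $k\ge1$, the bundle $F^{\otimes k}\otimes A$ is $f$-ample, since $F$ is $f$-nef. Relative Kleiman and Seshadri-type criteria apply here because $f$ is projective and each fiber component is projective. After shrinking $\Delta$ (depending on $k$), $F^{\otimes k}\otimes A$ is $f$-ample and hence carries a smooth metric $h_k$ with positive curvature.

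\textbf{Step 2: an equivalent vanishing statement.} The obstacle is that Theorem~\ref{thm-relative-kv-vanishing} needs the exact bundle $F$, not a perturbation. To avoid infinitely many shrinkings, I would instead rewrite
\[
K_Y\otimes F=K_Y\otimes F'\qquad\text{with}\quad F':=F.
\]
I would then find a singular metric on $F$ directly. Take a metric on $F$ of the form $h=(h_k\cdot h_A^{-1})^{1/k}$. Its curvature is $\ge -\tfrac1k\,\sqrt{-1}\Theta(A)$, which is not semipositive.

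A cleaner route uses relative bigness. Since $F|_{Y_t}$ is nef and big for $t\in\Delta^\circ$, and $f$ is projective, Kodaira's lemma in the relative setting should give, after shrinking $\Delta$, a decomposition
\[
F^{\otimes N}\cong A\otimes\mathcal O_Y(E)
\]
with $A$ $f$-ample and $E$ effective. Here I use that $f_*(F^{\otimes N}\otimes A^{-1})\neq0$ for $N\gg0$, because $F$ is big on the general fiber.

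Then $F^{\otimes(N k+1)}=F\otimes(F^{\otimes k}\otimes A)^{\otimes 1}\otimes\ldots$. The real point is that $F^{\otimes(k)}\otimes\mathcal O(-E)\cdot$ is $\mathbb Q$-ample, in the sense that $(k+N)F-E=kF+A$. Write $F=\tfrac1{k+N}(kF+A)+\tfrac1{k+N}E$. This gives $F$ a singular metric $h$ with semipositive (indeed strictly positive) curvature. Its weight is a smooth positive part plus $\tfrac1{k+N}\log|s_E|^2$. For $k$ large, the coefficient $\tfrac1{k+N}$ is small, so $\mathcal I(h)=\mathcal O_Y$ near $f^{-1}(0)$ by Skoda's lemma, because $E$ has bounded multiplicities over a relatively compact neighbourhood. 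We only need to shrink $\Delta$ once, for one large $k$.

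\textbf{Step 3: applying the vanishing theorem.} With this $h$, the curvature is semipositive and $\mathcal I(h)=\mathcal O_Y$. Moreover, $(F|_{Y_t},h|_{Y_t})$ is big for $t\in\Delta^\circ$ outside $f(E)$-degenerate fibers, since the curvature is strictly positive there. Theorem~\ref{thm-relative-kv-vanishing} then gives $R^qf_*(K_Y\otimes F)=0$ for all $q>0$ over the shrunken disk.

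The main obstacle I expect is justifying the relative Kodaira lemma and the $f$-ampleness of $kF+A$ uniformly near $f^{-1}(0)$. Fibers over $0$ may be reducible or singular, so I would invoke the relative Kleiman criterion for projective morphisms over a Stein base, together with the $f$-nef hypothesis on every component of every fiber.
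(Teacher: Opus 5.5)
Your proposal is correct and follows essentially the paper's proof: a relative Kodaira decomposition $F^{\otimes N}\cong A\otimes\mathcal O_Y(E)$, then a positively curved smooth metric $g_k$ on the $f$-ample bundle $F^{\otimes k}\otimes A$ after a single shrinking, then a combined singular metric on $F$ with positive curvature and trivial multiplier ideal (by Skoda), and finally Matsumura's relative vanishing theorem. Your metric $(g_k\otimes h_E)^{1/(k+N)}$ is exactly the paper's $h_k^{1-\delta}h_{\rm big}^{\delta}$ with $\delta=N/(k+N)$: the $h_A$-factors cancel and the strict positivity comes from $g_k$ itself, so yours is a slightly streamlined version of the same construction.
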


\begin{proof}
Fix an $f$-ample line bundle $A$.
We first produce an effective divisor $E$ and an integer $r>0$
such that
\begin{equation}\label{eq-big-decomposition}
F^{\otimes r}\cong A\otimes\mathcal O_Y(E).
\end{equation}
As in the proof of Lemma~\ref{lem-projective-model}, choose
$s\in\Delta^\circ$   at which cohomology and base change holds for
every positive tensor power. Since $F|_{Y_{s}}$ is big, choose $r$
sufficiently large that
\[
H^0\bigl(Y_{s},
(F^{\otimes r}\otimes A^{-1})|_{Y_{s}}\bigr)\neq0.
\]
Cartan's Theorem~A  together with base
change at $s$, then gives a nonzero section of
\[
H^0\bigl(\Delta,f_*(F^{\otimes r}\otimes A^{-1})\bigr)=H^0(Y,F^{\otimes r}\otimes A^{-1}).
\]
Its zero divisor $E$ induces the decomposition \eqref{eq-big-decomposition}.

After shrinking $\Delta$, choose a smooth Hermitian metric $h_A$ on
$A$ whose normalized curvature $\omega_A$ is positive on $Y$.
We normalize the curvature so that the divisor metric induced by the canonical section of $E$ on
$\mathcal O_Y(E)$ has curvature current $[E]$.
Equation~\eqref{eq-big-decomposition} then gives a singular divisor
metric $h_{\rm big}$ on $F$ such that
\[
\Theta(F,h_{\rm big})=\frac1r\bigl(\omega_A+[E]\bigr).
\]

After shrinking $\Delta$ if necessary, we may choose a fixed number $0<\delta<1$ sufficiently small such that   the divisor metric with coefficient $\delta/r$ has trivial multiplier ideal,  as can be checked on a log resolution of $(Y,E)$.
Now choose an integer $k$ satisfying
$k>r(1-\delta)/\delta$.
Since $F$ is $f$-nef and $A$ is $f$-ample,
$F^{\otimes k}\otimes A$ is $f$-ample.
After a further shrinking, it has a smooth Hermitian metric
$g_k$ with positive curvature on the total space.
The smooth metric
$h_k:=(g_k\otimes h_A^{-1})^{1/k}$ on $F$ satisfies
$\Theta(F,h_k)\geq-\omega_A/k$.
Consequently, the metric
$h:=h_k^{1-\delta}h_{\rm big}^{\delta}$ satisfies
\begin{equation*}
\Theta(F,h)\geq
\left(\frac{\delta}{r}-\frac{1-\delta}{k}\right)\omega_A
+\frac{\delta}{r}[E],
\qquad
\mathcal I(h)=\mathcal O_Y.
\end{equation*}
The coefficient of $\omega_A$ is strictly positive.

Choose a smooth fiber not contained in $E$.
The restriction of $h$ to this fiber is well defined and its
curvature dominates a K\"ahler form. Thus the singular
Hermitian line bundle on that fiber is big, with numerical
dimension equal to the fiber dimension.
Theorem~\ref{thm-relative-kv-vanishing} together with
$\mathcal I(h)=\mathcal O_Y$, gives that
$$R^qf_*(K_Y\otimes F)=0$$ for each $q>0$.
Tensoring with the trivial line bundle $K_\Delta^{-1}$ and using
the projection formula gives the assertion for $K_{Y/\Delta}$.  This completes the proof of Lemma \ref{lem-adjoint-vanishing}. 
\end{proof}

\begin{proof}[Proof of Theorem~\ref{thm-nef-big}]
\smallskip
\noindent\textbf{Step 1. Bigness of the canonical line bundle of the nearby fibers.}

After shrinking $\Delta$, the total space $X$ is K\"ahler.
Fix a K\"ahler form $\Omega$ on $X$ and a smooth closed form
$\alpha$ representing $c_1(K_{X/\Delta})$.
Note that a proper morphism  over a smooth curve is automatically flat. Then $\{X_t\}_{t\in \Delta}$  is an analytic family of $n$-cycles (via the Douady--Barlet morphism).  Consequently, the connectedness of $\Delta$ implies that $X_{t_1}$ is homologous to $X_{t_2}$ for any $t_1,t_2\in \Delta$.
Thus we obtain
\begin{equation*}
\int_{X_t}\alpha^k\wedge\Omega^{n-k}
=\sum_{i=1}^a
\int_{D_i}c_1(K_{X/\Delta}|_{D_i})^k\wedge(\Omega|_{D_i})^{n-k}.
\end{equation*}

Each restriction $K_{X/\Delta}|_{D_i}$ is nef, so each summand is
nonnegative. Recall that for a nef line bundle on a compact K\"ahler manifold, bigness
is equivalent to positive top self-intersection (\cite[Theorem~0.5]{DP04},  \cite[Theorem~4.7]{Bou02}).
Thus $K_{X/\Delta}|_{X_t}\cong K_{X_t}$ is nef with positive top
self-intersection, and is big for each $t\neq 0$ (after shrinking $\Delta$ such that $X_t$ is smooth for any $t\neq 0$).

\smallskip
\noindent\textbf{Step 2. A projective modification and adjoint vanishing.}
Apply Lemma~\ref{lem-projective-model} to $B=K_{X/\Delta}$.
We obtain a smooth $Y$ and a projective proper modification
$\mu:Y\to X$ such that $f:=\pi\circ\mu:Y\to \Delta$ is projective.
Fix $m\geq2$. 
Pullback preserves relative nefness, so $\mu^*K_{X/\Delta}^{\otimes(m-1)}$ is $f$-nef.
On a general smooth fiber, $\mu$ restricts to a proper
modification, and  thus the pullback of the big bundle
$K_{X/\Delta}^{\otimes(m-1)}|_{X_t}$ is big for general $t$. Furthermore, 
the fibers of $f$ are connected, because $\mu_*\mathcal O_Y
\cong\mathcal O_X$ and $\pi$ has connected fibers. Then 
Lemma~\ref{lem-adjoint-vanishing} gives that
\[
R^qf_*(K_{Y/\Delta}\otimes
\mu^*K_{X/\Delta}^{\otimes(m-1)})=0
\qquad\text{for each }q>0,
\]
after shrinking the base again.

\smallskip
\noindent\textbf{Step 3. Descent of the vanishing and extension of sections.}
Since $X$ is smooth, we have $\mu_*K_Y\cong K_X$. Then 
Grauert--Riemenschneider vanishing gives
$R^q\mu_*K_Y=0$ for $q>0$.
The projection formula therefore gives
\begin{equation}\label{eq-gr-descent}
R^q\mu_*(K_{Y/\Delta}\otimes
\mu^*K_{X/\Delta}^{\otimes(m-1)})
\cong
\begin{cases}
K_{X/\Delta}^{\otimes m},&q=0,\\
0,&q>0.
\end{cases}
\end{equation}
The Leray spectral sequence for $f=\pi\circ\mu$ and~\eqref{eq-gr-descent} identifies
$R^q\pi_*K_{X/\Delta}^{\otimes m}$ with
$R^qf_*(K_{Y/\Delta}\otimes
\mu^*K_{X/\Delta}^{\otimes(m-1)})$.
This proves \textup{(\ref{item-nef-big-vanishing})}. As in Observation \ref{thm-smooth-nefbig}, the vanishing yields the extension statement (for the $m=1$ case, \cite[Theorem 6.5]{Tk95} 
 or  \cite[Corollary 1.5]{Mat22} gives the required torsion-freeness of
$R^1\pi_*K_{X/\Delta}$), and this proves \textup{(\ref{item-nef-big-extension})}.
\end{proof}

\section*{Acknowledgements}
{The authors thank Professor I-Hsun Tsai for valuable discussions over the years on extension topics and the geometry of Moishezon morphisms, which led us to extension questions related to Matsumura's problem. The authors also wish to thank Professors Mu-Lin Li and Xiao-Lei Liu for useful discussions on deformation stability of canonical nefness. The authors also wish to thank Professor Mihai P\u{a}un for his interest in this work.  Furthermore, the authors would like to thank Jiawei Feng and Yi Li for helpful discussions on constancy of cohomological dimension.

The authors used ChatGPT 5.6 and 6 Pro as auxiliary tools in addressing the smooth case of Matsumura's problem. Following extensive discussions, ChatGPT 6 Pro suggested using P\u{a}un's twisted form of Schumacher's curvature formula to address the pseudoeffectivity issue, which enabled us to successfully construct the desired singular metric.  DeepSeek was also used to polish the language. The authors take full responsibility for the correctness of this paper.

\end{document}